\newcommand*\patchAmsMathEnvironmentForLineno[1]{
  \expandafter\let\csname old#1\expandafter\endcsname\csname #1\endcsname
  \expandafter\let\csname oldend#1\expandafter\endcsname\csname end#1\endcsname
  \renewenvironment{#1}
     {\linenomath\csname old#1\endcsname}
     {\csname oldend#1\endcsname\endlinenomath}}
\newcommand*\patchBothAmsMathEnvironmentsForLineno[1]{
  \patchAmsMathEnvironmentForLineno{#1}
  \patchAmsMathEnvironmentForLineno{#1*}}
\newtheorem{df}{Definition}
\newtheorem{prop}{Proposition}
\newtheorem{thm}{Theorem}
\newcommand{\qed}{{$\quad\square$\vs{3.6}}}
\newcommand{\vs}[1]{\vspace*{#1 mm}}
\begin{document}

\title{ Ricci curvature of Cayley graphs for dihedral groups, generalized quaternion groups, and cyclic groups} 
\author{
Iwao Mizukai%
\thanks{Graduate school of Science, Mathematical and Science Education, Tokyo University of Science,  E-mail:~1718709@ed.tus.ac.jp}
and Akifumi Sako%
\thanks{Department of Mathematics, Tokyo University of Science, , E-mail:~sako@rs.tus.ac.jp}~
 }

\maketitle
\begin{abstract}
Lin, Lu, and Yau formulated the Ricci curvature
 of edges in simple undirected graphs\cite{Lin}.
Using their formulation, 
we calculate the Ricci curvatures of Cayley graphs for the dihedral groups,
 the general quaternion groups, and the cyclic groups 
with some generating sets chosen so that
their cardinal numbers are less than or equal to four. 
For the dihedral group and the general quaternion group, 
we obtained the Ricci curvatures of all edges of the Cayley graph 
with generator sets consisting of the four elements that 
are the two generators defining each group and 
their inverses elements.
For the cyclic group $({\mathbb Z}/ n{\mathbb Z} , +)$, we have the Ricci curvatures 
of edges of the Cayley graph generating by $S_{1, k}= \{+1, -1, +k, -k \}$.

\end{abstract}

\section{Introduction}
Yann~Ollivier defined a new way to obtain the Ricci curvature 
on the metric space in his paper\cite{olli}.
The Ollivier Ricci curvature method was also applied to graph theory 
by Lin, Lu, and Yau \cite{Lin}. 
 Lin, Lu, and Yau convert the Ollivier Ricci curvatre into the Ricci curvatre.
In the papers\cite{Lin} \cite{smith}, 
they provided the Ricci curvature of graphs such as a path, a cycle, and a complete graph.\\
\bigskip

In this paper, 
we calculate the Ricci curvatures of Cayley graphs for the dihedral groups,
 the general quaternion groups, and the cyclic groups 
with some generating sets chosen so that
their cardinal numbers are less than or equal to four.
The results are shown in the following table \ref{result1} - \ref{result10}.

The dihedral groups are definded by $D_n= <\sigma, \tau | \sigma^{n}=\tau^{2}=e, \tau \sigma
 = \sigma^{n-1}\tau >$ , where $n \geq 3$.
We investigate the Ricci curvatures of Cayley graphs for the dihedral group of 
the minimum  generator set
$S=\{ \tau, \tau^{-1}, \sigma, \sigma^{-1} \} $.
$S$ makes two types of edges in the Cayley graphs.
One type of the edge set is $A= \{ (g, g\sigma)~ |~ g \in D_n \}$, 
and the other is the edge set $B=\{(g, g\tau)~ |~ g \in D_n \}$. 
The results of the Ricci curvatures are given in Table \ref{result1}.
\begin{table}[H]\label{result1}
\centering
  \caption{The Ricci curvature of the Cayley graphs of the dihedral group 
generated set $S$. }
  \begin{tabular}{|l||c|c|c|c|c|c|c|c|c|c|c|c|c|c|c|c|c|c|c|c|c|c|c|c|c|c|c|c|c|c|c|c|}  \hline
  \ \ Cayley graph 
		& $\Gamma(D_{3}, S)$ &$\Gamma(D_{4}, S)$& $\Gamma(D_{5}, S)$ & $\Gamma(D_{n}, S) (n\ge6)$ \\  \hline 
    Ricci curvature (type A) 
		& 1& $\frac{2}{3}$ & $\frac{1}{3}$ & $0$ \\ \hline
     Ricci curvature (type B) 
		& $\frac{2}{3}$ & $\frac{2}{3}$ & $\frac{2}{3}$ &$\frac{2}{3}$  \\ \hline
\end{tabular}
\end{table}

The generalized quaternion groups $Q_{4m}$ are defined by
$Q_{4m}= \langle \sigma, \tau ~|~ \sigma^{2m}= e, \tau^{2}= \sigma^{m},
 \tau^{-1}\sigma \tau = \sigma^{-1} \rangle, where  \  m \geq 2$.
We consider the Cayley graphs for the generalized quaternion groups 
with the generating set $S=\{ \sigma, \tau, \sigma^{-1}, \tau^{-1} \}$.
We distinguish between the two sets of edges.
One is the edge set $A= \{ (g, g \sigma)~ |~ g \in Q_{4m} \}$, 
and the other is the edge set $B=\{(g, g \tau)~ |~ g \in Q_{4m} \}$. 
The results are given in Table \ref{result2}.

\begin{table}[h]
\centering
  \caption{The Ricci curvature of the Cayley graphs of 
$Q_{4m}$ generated set $S$.}
\begin{tabular}{|c|c|c|c|c|c|c|c|c|c|c|c|c|c|c|c|c|c|c|c|c|c|c|c|c|c|c|c|c|c|c|c|c|}  \hline
\ \ Cayley graph 
		& $\Gamma(Q_{8}, S)$ & $\Gamma(Q_{12}, S)$ & $\Gamma(Q_{4m}, S) (m \geq 4)$ \\  \hline 
   Ricci curvature (type A) 
		& $\frac{1}{2}$& $\frac{1}{4}$ & $0$ \\ \hline
   Ricci curvature (type B) 
		& $\frac{1}{2}$& $\frac{1}{2}$ & $\frac{1}{2}$  \\  \hline 
    \end{tabular}
\label{result2}
\end{table}

We consider the Cayley graph for the cyclic group $(\mathbb{Z}/n\mathbb{Z}, +)$
with a generating set $S_{1, k} =\{ +1, +k, -1, -k \}$, 
where $k$ is a positive integer not equal to $1$.
We distinguish between the two sets of edges.
One is the edges $A= \{ (g, g+1)~ |~ g \in \mathbb{Z}/n\mathbb{Z} \}$, 
the other is edges $B=\{(g, g+k)~ |~ g \in \mathbb{Z}/n\mathbb{Z} \}$. 
The Ricci curvatures of the Cayley graphs for the cyclic group 
$\Gamma (\mathbb{Z}/n\mathbb{Z}, S_{1, 2})$ ( $ 6 \leq n \leq 10$)
 with the generating set  $S_{1, 2}=\{ +1,+2, -1, -2 \} $
are given in Table \ref{result3}.

\begin{table}[H]
\centering
  \caption{The Ricci curvature of the Cayley graphs of $\mathbb{Z}/n\mathbb{Z} $ with $S_{1, 2}$.}
  \begin{tabular}{|c|c|c|c|c|c|c|c|c|c|c|c|c|c|c|c|c|c|c|c|}  \hline
   \ \ $n$ & 6&7&8&9&10 \\ \hline
    Ricci curvature (type A)   &1&1 &$\frac{2}{3}$& $\frac{3}{4}$& $\frac{1}{2}$  \\ \hline
    Ricci curvature (type B)  & 1 & $\frac{3}{4}$ &  $\frac{1}{2}$ & $\frac{1}{4}$& $\frac{1}{4}$    \\ \hline
    \end{tabular}
\label{result3}
\end{table}

%Let $\Gamma (\mathbb{Z}/n\mathbb{Z}, S_{1, 2})$ be the Cayley graph 
%of $\mathbb{Z}/n\mathbb{Z}$  ($n \geq 11$) with the generating set $S_{1, 2}$.
The Ricci curvatures of the Cayley graphs $\Gamma (\mathbb{Z}/n\mathbb{Z}, S_{1, 2})$  for $n \geq 11$ 
are given in Table \ref{result4}.

\begin{table}[h]
\centering
  \caption{The Ricci curvature of the Cayley graph of $\mathbb{Z}/n\mathbb{Z}$  ($n \geq 11$)  with  $S_{1, 2}$.}
  \begin{tabular}{|c|c|c|c|c|c|c|c|c|c|c|c|c|c|c|c|c|c|c|c|c|c|c|c|c|c|c|c|c|c|c|c|c|}  \hline
  \ \ Cayley graph 
		& $\Gamma (\mathbb{Z}/n\mathbb{Z}, S_{1, 2})$($n \geq 11$)  \\  \hline 
   Ricci curvature (type A)  
		& $\frac{1}{2}$  \\ \hline
    Ricci curvature (type B)  & $0$ \\ \hline
 \end{tabular}
\label{result4}
\end{table}

The Ricci curvatures of the Cayley graphs 
$\Gamma (\mathbb{Z}/n\mathbb{Z}, S_{1, 3})$
 with the generating set  $S_{1, 3}=\{ +1,+3, -1, -3 \} $ ($6 \leq n \leq 15$)
are given in Table \ref{result5}.

\begin{table}[H]
\centering
  \caption{The Ricci curvature of the Cayley graph of $\mathbb{Z}/n\mathbb{Z}$  ($6 \leq n \leq 15$) 
 generated by $S_{1, 3}$.}
  \begin{tabular}{|c|c|c|c|c|c|c|c|c|c|c|c|c|c|c|c|c|c|c|c|c|c|c|c|c|c|c|c|c|c|c|c|c|}  \hline
   \ \ $n$ &6&7&8&9&10&11&12&13&14&15 \\ \hline 
    Ricci curvature (type A)  & $\frac{2}{3}$ & $\frac{3}{4}$ & $\frac{1}{2}$ & $\frac{1}{2}$ &
   $\frac{1}{2}$ &$\frac{1}{2}$&$\frac{1}{2}$&$\frac{1}{2}$&$\frac{1}{2}$&$\frac{1}{2}$  \\ \hline
    Ricci curvature (type B)  &  $\frac{2}{3}$ & 1 &$\frac{1}{2}$ &$\frac{3}{4}$&
						$\frac{1}{2}$ &$\frac{3}{4}$ &$\frac{1}{2}$&$\frac{1}{4}$&
   					0&$\frac{1}{4}$  \\ \hline
    \end{tabular}
\label{result5}
\end{table}
%Let $\Gamma (\mathbb{Z}/n\mathbb{Z}, S_{1, 3})  (n \geq 16)$ 
%be the Cayley graph with the generating set $S_{1, 3}$.
The Ricci curvatures of the Cayley graphs 
$\Gamma (\mathbb{Z}/n\mathbb{Z}, S_{1, 3})$
 with the generating set  $S_{1, 3}=\{ +1,+3, -1, -3 \} $ ($ n \geq 16$)
are given in Table \ref{result6}.

\begin{table}[h]
\centering
  \caption{The Ricci curvature of the Cayley graph of $\mathbb{Z}/n\mathbb{Z}$
   ($ n \geq 16$) generated set $S_{1, 3}$.}
  \begin{tabular}{|l||c|r|c|c|c|c|c|c|c|c|c|c|c|c|c|c|c|c|c|c|c|c|c|c|c|c|c|c|c|c|c|c|}  \hline
  \ \ Cayley graph 
		& $\Gamma (\mathbb{Z}/n\mathbb{Z}, S_{1, 3}) $ ($ n \geq 16$) \\  \hline 
   Ricci curvature (type A)   
		& $\frac{1}{2}$   \\ \hline
    Ricci curvature (type B)  &  $0$  \\ \hline
 \end{tabular}
 \label{result6}
\end{table}

The Ricci curvatures of the Cayley graphs 
$\Gamma (\mathbb{Z}/n\mathbb{Z}, S_{1, 4})$
 of generating set  $S_{1, 4}=\{ +1, +4, -1, -4 \} $ ($6 \leq n \leq 22$)
are given in Table \ref{result7}.

\begin{table}[H]
\centering
  \caption{The Ricci curvature of the Cayley graph of  $\mathbb{Z}/n\mathbb{Z}$
generated by $S_{1, 4}$}
  \begin{tabular}{|c|c|r|c|c|c|c|c|c|c|c|c|c|c|c|c|c|c|c|c|c|c|c|c|c|c|c|c|c|c|c|c|c|}  \hline
   \ \  $n$ & 6&7&8&9&10&11&12&13&14&15&16&17&18&19&20&21&22 \\  \hline 
     Ricci curvature (type A)  & $\frac{2}{3}$ & $\frac{3}{4}$& 
			        $\frac{1}{4}$ &$\frac{1}{4}$& 
							$\frac{1}{4}$ & $\frac{1}{4}$ &$\frac{1}{4}$ &$\frac{1}{4}$
	 					&$\frac{1}{4}$&$\frac{1}{4}$&$\frac{1}{4}$ &$\frac{1}{4}$&$\frac{1}{4}$
						&$\frac{1}{4}$&$\frac{1}{4}$&$\frac{1}{4}$&$\frac{1}{4}$ \\ \hline
     Ricci curvature (type B) 
		& $\frac{2}{3}$&  1 & $\frac{1}{4}$&$\frac{3}{4}$ &$\frac{1}{4}$ &
   $\frac{1}{2}$&$\frac{3}{4}$&$\frac{1}{2}$ &$\frac{1}{4}$&$\frac{1}{4}$&$\frac{1}{2}$&$\frac{1}{4}$
    & $\frac{1}{4}$&0&$\frac{1}{4}$&0& $\frac{1}{4}$\\ \hline
    \end{tabular}
\label{result7}
\end{table}

The Ricci curvatures of the Cayley graphs 
$\Gamma (\mathbb{Z}/n\mathbb{Z}, S_{1, 4})$
 of generating set  $S_{1, 4}=\{ +1, +4, -1, -4 \}  (n \geq 23)$
are given in Table \ref{result8}.

\begin{table}[H]
\centering
  \caption{The Ricci curvature of the Cayley graph of $\mathbb{Z}/n\mathbb{Z} (n \geq 23)$
generated set $S_{1, 4}$}
  \begin{tabular}{|c|c|r|c|c|c|c|c|c|c|c|c|c|c|c|c|c|c|c|c|c|c|c|c|c|c|c|c|c|c|c|c|c|}  \hline
  \ \ Cayley graph 
		& $\Gamma (\mathbb{Z}/n\mathbb{Z}, S_{1, 4})(n \geq 23)$ \\  \hline 
   Ricci curvature (type A) 
		& $\frac{1}{4}$ \\ \hline
   Ricci curvature (type B)  & $0$  \\ \hline
 \end{tabular}
\label{result8}
\end{table}
%%%%%%%%%%%%%%%%%%%%%%%%%

The Ricci curvature of the edges of the Cayley graph 
$\Gamma (\mathbb{Z}/n\mathbb{Z}, S_{1, 5})$
 with generating set  $S_{1, 5}=\{ +1,+5, -1, -5 \}$
 for $7 \le n \le 25 $
are given in Table \ref{result9}.  
\begin{table}[h]
\centering
  \caption{The Ricci curvature of the Cayley graph that can be generated by $S_{1, 5}$.}
  \begin{tabular}{|c|c|c|c|c|c|c|c|c|c|c|c|c|c|c|c|c|c|c|c|c|c|c|c|c|c|c|c|c|c|c|c|c|}  \hline
   \ \ $n$ &7&8&9&10&11&12&13&14&15&16&17&18&19&20&21&22&23&24&25 \\  \hline 
    Type A & 1& $\frac{1}{2}$ & $\frac{1}{4}$ & $\frac{1}{4}$ & $\frac{1}{2}$ &$\frac{1}{2}$ &$\frac{1}{4}$ &
	 0&0&0&0&0&0&0&0&0&0&0&0 \\ \hline
    Type B & $\frac{3}{4}$&  $\frac{1}{2}$ & $\frac{3}{4}$ &$\frac{3}{4}$ &0&0&$\frac{1}{4}$ &
   $\frac{1}{2}$&$\frac{3}{4}$&$\frac{1}{2}$ &$\frac{1}{4}$&0&$\frac{1}{4}$&$\frac{1}{2}$
    & $\frac{1}{4}$ &0&0&0& $\frac{1}{4}$ \\ \hline
    \end{tabular}
\label{result9}
\end{table}

 %%%%%%%%%%%%%%%%%%%%%%%%%%%%
The Ricci curvatures of the Cayley graphs 
$\Gamma(\mathbb{Z}/n\mathbb{Z}, S_{1, k})$ 
with conditions written in Theorem \ref{thm_z_m}
are zero in table \ref{result10}.
%%%%%%%%%%%%%%%%%%%%%%%%%%
The conditions for type A are $ k \geq  5$, $n \neq 3k-2$, and $n \geq 2k+4$.
The Ricci curvatures of type B edges vanish in the following 4 cases:
1. $k \geq 5$ and  $3k+3 \leq n \leq 4k-2$, 
2. $ k \geq 3$ and $4k+2 \leq n \leq 5k-1$, 
3. $ k \geq 3$ and $n \geq 5k+1$,
4. $ k \geq 6$ and $2k+4 \leq n \leq 3k-3$.

%%%%%%%%%%%%%%%%%%%%%%%%%
\begin{table}[H]
\centering
  \caption{The Ricci curvature of the Cayley graph of the cyclic group 
generated set $S_{1, k}$}
  \begin{tabular}{|c|c|r|c|c|c|c|c|c|c|c|c|c|c|c|c|c|c|c|c|c|c|c|c|c|c|c|c|c|c|c|c|c|}  \hline
  \ \ Cayley graph 
	& $\Gamma (\mathbb{Z}/n\mathbb{Z}, S_{1, k})$ with the above conditions \\  \hline 
   Ricci curvature (type A) 
		& $0$  \\ \hline
    Ricci curvature (type B) & $0$ \\ \hline
	\end{tabular}
\label{result10}
\end{table}

%%%%%%%%%%%%%%%%% plus comment 24_01_11
There are edges that do not satisfy the above any conditions 1-4
in Theorem \ref{thm_z_m}. Such edges have non-zero Ricci curvature in general and their values
depend on relations between $n$ and $k$.
Therefore, determining the curvature of these edges does not seem to be an easy task, as it requires an in-depth discussion of the detailed structure of the Cayley graph, which is determined by $n$ and $k$.
It remains a future work.\\
\bigskip
%%%%%%%%%%%%%%%%%%%%%%%%%%%%%%%%%%%%%

This paper is organized as follows.
In Section \ref{pre}, 
we prepare some definitions and some theorems
used in this paper.
The Ricci curvatures of Cayley graphs are calculated 
for dihedral groups in Section \ref{dihe}, 
for generalized quaternion groups in Section \ref{quate}, 
and for cyclic groups in Section \ref{cyc}.

\section{Preparations}\label{pre}
We review the definitions of the Ricci curvature and the Cayley graphs and related theorems 
to prepare for the calculations in the following sections.
\subsection{Ricci curvature of graph}
Let $G$ be a simple undirected graph with vertex set $V$ and edge set $E$, 
$N_{G}(x)$ be the set of neighbors of $x$, $deg_{G}(x)=|N_{G}(x) |$  be degree
 of vertex $x$, and $d(x, y)$ be the distance between $x$ and $y \in V$.  
\begin{df}\cite{olli}\cite{Lin}\cite{pey}
Let $G$ be a connected graph. A probability measure $\mu$ on 
$G$ is defined as a function $\mu : V \rightarrow [0, 1]$ with
\begin{equation}
	\sum_{x \in V}\mu(x)=1.
\end{equation}
The set of all probability measure on $G$ is  denoted by $P(V) .$
\end{df}

\begin{df}\cite{olli}\cite{Lin}\cite{pey}
Let $G=(V, E)$ be a connected graph and $\mu, \nu \in P(V) $
be two probability measures. The transport plan from  $\mu$ to $\nu$ is a map
$\pi : V \times V \rightarrow [0, 1]$ such that
\begin{equation}
\mu(x) = \sum_{y \in V} \pi(x, y) \hspace{3cm} \forall x \in V,
\end{equation} 
and
\begin{equation}
 \nu(y) = \sum_{x \in V} \pi(x, y) \hspace{3cm} \forall y \in V.
\end{equation} 
The value $\pi (x, y)$ is the mass transported from $x$ to $y$ 
along a geodesic (at cost $d(x, y)\pi(x, y)$ ), and total cost 
of the transport plan $\pi$ is defined as 
\begin{equation}
 cost(\pi) := \sum_{x, y \in V} d(x, y)\pi(x, y).
\end{equation}

We call $d(x, y)\pi(x, y)$ ``transport cost  from $x$ to $y$", 
and we describe it as 
$x \rightarrow y : d(x, y)\pi(x, y)$ for $x, y$ in $V$.
We denote the set of all transport plans from $\mu$ to $\nu$ by $\Pi(\mu, \nu)$.
We define $1$-Wasserstein distance between $\mu$ to $\nu$ as
\begin{equation}\label{W_1_inf}
 W_{1}(\mu, \nu) := \inf_{\pi \in \Pi(\mu, \nu)}  cost(\pi).
\end{equation}

A transport plan $\pi \in \Pi$ is called optimal if we have 
\begin{equation}
 W_{1}(\mu, \nu) = cost(\pi).
\end{equation}
\label{trans}
\end{df}

\begin{df}\cite{olli}\cite{Lin}\cite{pey}
Let  $G =(V, E)$ be a connected graph.
A function $f$ over the vertex set $V$ of $G$ is said to $1$-Lipschitz  
if 
\begin{equation}
|f(u)-f(v)| \leq d(u, v)  \quad \forall u, v \in V.
\end{equation}
\label{Lip}
\end{df}
The following theorem plays an essential role in this study.
\begin{thm}\cite{olli}\cite{Lin}\label{thm_w_1_lower}
Let $G=(V, E)$ be a connected graph. 
For $\mu, \nu \in P(V)$,
%we have
\begin{equation}
W_{1}(\mu, \nu) %= \inf_{\pi \in \Pi(\mu, \nu)}\sum_{x, y \in V}d(x, y)\pi(x, y)
= \sup _{f \in 1-Lip(G)} \sum_{x \in V} f(x)(\mu(x)-\nu(x)).
\label{1lip}
\end{equation}

\end{thm}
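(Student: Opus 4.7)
The plan is to prove the two inequalities separately. The direction $W_1(\mu,\nu) \geq \sup_f \sum_x f(x)(\mu(x)-\nu(x))$, where the supremum is over $1$-Lipschitz functions, is an elementary manipulation of the definitions, whereas the reverse inequality is the Kantorovich--Rubinstein duality and is the main obstacle.

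First I would verify the easy direction. For any transport plan $\pi \in \Pi(\mu,\nu)$ and any $1$-Lipschitz function $f$, the marginal conditions in Definition \ref{trans} yield
\begin{equation}
\sum_{x \in V} f(x)(\mu(x) - \nu(x)) = \sum_{x,y \in V}(f(x) - f(y))\pi(x,y) \leq \sum_{x,y \in V} d(x,y)\pi(x,y) = cost(\pi),
\end{equation}
where the inequality is the $1$-Lipschitz property from Definition \ref{Lip}. Taking the supremum over $f$ on the left and the infimum over $\pi$ on the right delivers the inequality $\sup_f \sum_x f(x)(\mu(x)-\nu(x)) \leq W_1(\mu,\nu)$.

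For the reverse inequality I would recognize $W_1(\mu,\nu)$ as the optimal value of a finite-dimensional linear program with variables $\{\pi(x,y) \geq 0\}_{x,y \in V}$, objective $\sum_{x,y} d(x,y)\pi(x,y)$, and the marginal equality constraints. Its LP dual, obtained by introducing multipliers $a(x)$ and $b(y)$ for the two marginal constraints, reads
\begin{equation}
\max\Bigl\{ \sum_x a(x)\mu(x) + \sum_y b(y)\nu(y) \ : \ a(x) + b(y) \leq d(x,y) \ \forall x,y \in V \Bigr\}.
\end{equation}
Strong LP duality applies because the primal is feasible (take $\pi(x,y) = \mu(x)\nu(y)$) and bounded below by $0$, so $W_1(\mu,\nu)$ equals the dual maximum.

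The main obstacle is to show that this dual maximum is bounded above by $\sup_f \sum_x f(x)(\mu(x)-\nu(x))$ with $f$ ranging over $1$-Lipschitz functions. Given any feasible dual pair $(a,b)$, I would apply the $c$-transform twice: first set $\tilde b(y) := \min_{x \in V}(d(x,y) - a(x))$, then $f(x) := \min_{y \in V}(d(x,y) - \tilde b(y))$. Both substitutions preserve dual feasibility and only enlarge the respective coordinate ($\tilde b \geq b$ and $f \geq a$ pointwise), so the dual objective only grows. The function $f$ is $1$-Lipschitz as a pointwise minimum of the functions $x \mapsto d(x,y) - \tilde b(y)$, each of which is $1$-Lipschitz in $x$. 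Feasibility of the improved pair $(f,\tilde b)$ evaluated at $x = y$ then forces $\tilde b(y) \leq d(y,y) - f(y) = -f(y)$ for every $y$, and hence
\begin{equation}
\sum_x a(x)\mu(x) + \sum_y b(y)\nu(y) \leq \sum_x f(x)\mu(x) + \sum_y \tilde b(y)\nu(y) \leq \sum_x f(x)(\mu(x)-\nu(x)).
\end{equation}
Taking the supremum over dual-feasible $(a,b)$ on the left, invoking LP duality, and the supremum over $1$-Lipschitz $f$ on the right gives $W_1(\mu,\nu) \leq \sup_f \sum_x f(x)(\mu(x)-\nu(x))$, completing the proof.
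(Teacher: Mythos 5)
Your argument is correct, but note that the paper itself offers no proof of this statement: it is quoted as a known result (Kantorovich--Rubinstein duality) from Ollivier and from Lin--Lu--Yau, so there is nothing internal to compare against. What you supply is the standard self-contained derivation: the easy inequality $\sup_f \sum_x f(x)(\mu(x)-\nu(x)) \leq W_1(\mu,\nu)$ by pairing a $1$-Lipschitz $f$ with the marginal constraints of an arbitrary plan, and the converse via finite-dimensional LP strong duality followed by a double $c$-transform to upgrade an arbitrary feasible dual pair $(a,b)$ to a pair of the form $(f,-f)$ with $f$ $1$-Lipschitz. Each step checks out: $\tilde b \geq b$ and $f \geq a$ pointwise, feasibility is preserved, $f$ is $1$-Lipschitz as a pointwise minimum of the $1$-Lipschitz functions $x \mapsto d(x,y)-\tilde b(y)$, and evaluating the constraint at $x=y$ gives $\tilde b(y) \leq -f(y)$, which is exactly what converts the dual objective into the Lipschitz functional. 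The only caveat worth recording is that your appeal to finite-dimensional LP duality and to minima (rather than infima) in the $c$-transform tacitly assumes $V$ is finite; the theorem as stated allows any connected graph, though every application in this paper is to a finite Cayley graph, so the restriction is harmless here. If you want the statement in full generality you would need either the general Kantorovich duality theorem on metric spaces or a separate limiting argument.
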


\begin{df}\label{def_prob_measure}\cite{olli}\cite{Lin}\cite{pey}
 A probability measure is a function $\mu: V \rightarrow [0, 1]$ with 
$\sum_{x \in V}\mu(x)=1$. To define Ricci curvature for each edge of a graph 
in this paper, 
we only consider probability measures $\mu_{x}^{\alpha}$
in the following form, 

\begin{eqnarray}
\mu_{x}^{\alpha}(v)= \left\{ \begin{array}{lll}
			\alpha  & (v=x), \\
			\frac{1-\alpha}{\deg_{G}(x)} & (v \in N_{G}(x)), \\
			0 & (otherwise) .\\
		\end{array} \right.
\end{eqnarray}
Here, $x, v \in V$ and $\alpha \in [0, 1]$.
\end{df}

\begin{df}\cite{olli} \label{def_alpha_ricci}
Let $G=(V, E)$ be a connected graph. 
For any value $\alpha \in [0, 1]$ and $x, y \in V$, we define the Ollivier Ricci curvature 
$\kappa_{\alpha}(x, y)$ as
\begin{equation}
\kappa_{\alpha}(x, y):= 1-\frac{W_{1}(\mu_{x}^{\alpha}, \mu_{y}^{\alpha})}{d(x, y)}.
\label{or}
\end{equation}

\end{df}

The Ricci curvature is definable on any undirected pair of vertices $x$ and $y$.

\begin{df}\cite{Lin} \label{def_ricci}
The Ricci curvature is defined  from the Ollivier Ricci curvature;
\begin{equation}
\kappa (x, y) = \lim_{\alpha \rightarrow 1} \frac{\kappa_{\alpha}(x, y)}{1-\alpha}.
\end{equation}

\end{df}

\subsection{Cayley graph}
This paper deals with the Cayley graph as an undirected graph.

\begin{df}\cite{mag}
Let $\mathcal{G}$ be a group.
Let $S \subset \mathcal{G}$  be a generating set of $\mathcal{G}$ 
such that $e \notin S$ and $S =S^{-1}$.
Here $S=S^{-1}$ means that $s^{-1} \in S$ if $s \in S$,
and  we call this $S$ symmetric.
If a graph satisfies the following conditions, then the graph is called a Cayley graph, 
and the graph is written  by $\Gamma(\mathcal{G}, S)$;
\begin{enumerate}
\item There exist a bijection between the group $\mathcal{G}$ and  
the set of vertices of graph $\Gamma(\mathcal{G}, S)$.
The vertex corresponding to $g$ is labeled as $g$.

\item 

The set of  edges of graph $\Gamma(\mathcal{G}, S)$ is 
defined as $\{ (g, gs) ~ | ~g \in \mathcal{G}, ~s \in S \}$.

\end{enumerate}
\end{df}

For example, refer to Magnus' book, which provides a detailed introduction to the properties of Cayley graphs \cite{mag}. \\

In this paper, we calculate the Ricci curvature of Cayley graphs defined by dihedral groups, general quaternion groups, and cyclic groups.

\section{The Ricci curvature of the Cayley graph of dihedral groups} \label{dihe}
The purpose of this section is to calculate the Ricci curvatures for edges in Cayley graphs for dihedral groups. \\

\ A dihedral group is related to the congruent transformation of regular polygons. 
The dihedral group is generated by two generators $\sigma, \tau$.
$\sigma$ represents the action of rotation of the angle $\frac{\pi}{n}$ around the center of gravity, and $\tau$ represents the transformation of reflection around a certain axis. 

\begin{df}
The dihedral group is defined by $D_n= <\sigma, \tau | \sigma^{n}=\tau^{2}=e, \tau \sigma
 = \sigma^{n-1}\tau >$ , 
where $n \geq 3$.
\end{df}

We investigate Ricci curvatures of  Cayley graphs for the dihedral group of 
the generating set
$S=\{ \tau, \tau^{-1}, \sigma, \sigma^{-1} \} $.
Here, the generating set
S makes two types of edges in Cayley graphs.
One type of the edge set is $A= \{ (g, g\sigma)~ |~ g \in D_n \}$, 
and the other is the edge set $B=\{(g, g\tau)~ |~ g \in D_n \}$. 
Now, we call the edge in set  $A$ and $B$ type $A$ and type $B$, respectively.\\
\bigskip

At first, let us consider Ricci curvatures of the Cayley graph for the dihedral group $D_3$ 
with the generating set $S=\{ \tau, \tau^{-1}, \sigma, \sigma^{-1} \} $.
\begin{prop} 
Let $\Gamma(D_{3}, S)$ be the Cayley graph of the dihedral group $D_{3}$
with $S = \{ \tau, \tau^{-1}, \sigma, \sigma^{-1} \}$.
The Ricci curvature of any type A  edge in $\Gamma(D_{3}, S)$ is $\kappa = 1$. 
The Ricci curvature of any type B edge in $\Gamma(D_{3}, S)$ is $\kappa = \frac{2}{3}$. \\
\end{prop}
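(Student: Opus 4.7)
\medbreak\noindent\textbf{Proof plan.}
My plan is to exploit the very small size of the graph. First I would unpack $\Gamma(D_{3},S)$: since $\tau^{-1}=\tau$ and $\sigma^{-1}=\sigma^{2}$ in $D_{3}$, the symmetric generating set is really $\{\sigma,\sigma^{2},\tau\}$ and the graph is $3$-regular on the six vertices $e,\sigma,\sigma^{2},\tau,\sigma\tau,\sigma^{2}\tau$. Using the defining relation $\tau\sigma^{k}=\sigma^{-k}\tau$, I would tabulate the six $3$-element neighborhoods and observe that the graph is the triangular prism: the $\sigma$-edges (type A) split into two triangles, on the rotation coset $\{e,\sigma,\sigma^{2}\}$ and on the reflection coset $\{\tau,\sigma\tau,\sigma^{2}\tau\}$, while the three $\tau$-edges (type B) form the matching joining them. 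Left multiplication makes the graph vertex-transitive and preserves edge type, so it suffices to compute $\kappa$ on the two representative edges $(e,\sigma)$ and $(e,\tau)$.

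For each representative $(e,g)$ I would write the measures $\mu_{e}^{\alpha}$ and $\mu_{g}^{\alpha}$ explicitly from Definition \ref{def_prob_measure} and exhibit an upper bound on $W_{1}$ by an explicit transport plan. For the type-A edge $(e,\sigma)$, the supports share the common neighbor $\sigma^{2}$ and the outlier vertices $\tau$ and $\sigma\tau$ are themselves adjacent (because $\tau\sigma^{-1}=\sigma\tau$), so the plan that keeps mass $(1-\alpha)/3$ at $\sigma^{2}$, ships the excess $(4\alpha-1)/3$ from $e$ to $\sigma$ along the edge, and transports $(1-\alpha)/3$ from $\tau$ to $\sigma\tau$ yields $W_{1}\le\alpha$. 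For the type-B edge $(e,\tau)$ the two triangles are joined only by the matching, so the natural plan ships the $e$-excess to $\tau$ and the $(1-\alpha)/3$ at $\sigma$ (resp.\ $\sigma^{2}$) across the matching to $\sigma\tau$ (resp.\ $\sigma^{2}\tau$), giving $W_{1}\le(2\alpha+1)/3$.

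To confirm these are optimal I would invoke Theorem \ref{thm_w_1_lower} and exhibit $1$-Lipschitz certificates. For type A the function taking values $0,-1,0,0,-1,-1$ on the vertices $e,\sigma,\sigma^{2},\tau,\sigma\tau,\sigma^{2}\tau$ is easy to check to be $1$-Lipschitz on every edge of the prism and evaluates the dual functional to exactly $\alpha$. For type B the function that is $0$ on the rotation triangle and $-1$ on the reflection triangle is $1$-Lipschitz (type-A edges give $f$-difference $0$ and matching edges give $f$-difference $1$) and attains $(2\alpha+1)/3$. Substituting the resulting identities $W_{1}(\mu_{e}^{\alpha},\mu_{\sigma}^{\alpha})=\alpha$ and $W_{1}(\mu_{e}^{\alpha},\mu_{\tau}^{\alpha})=(2\alpha+1)/3$ into Definition \ref{def_ricci} and letting $\alpha\to 1$ produces $\kappa_{A}=1$ and $\kappa_{B}=2/3$.

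The step I expect to be trickiest is not the arithmetic but the bookkeeping of the six adjacencies under $\tau\sigma^{k}=\sigma^{-k}\tau$: one must correctly verify that $\tau,\sigma\tau,\sigma^{2}\tau$ form a triangle (so the outlier mass in the type-A plan can be moved at unit cost) and that the only edges between the two cosets are the three matching edges (so the type-B $1$-Lipschitz witness is tight and cannot be improved to a larger value). Once the prism structure is pinned down, everything else reduces to a short verification.
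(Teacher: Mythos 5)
Your proposal is correct and follows essentially the same strategy as the paper's proof: an explicit transport plan gives $W_{1}\le\alpha$ (type A) and $W_{1}\le\alpha+\tfrac{1}{3}(1-\alpha)$ (type B), a $1$-Lipschitz witness via Theorem \ref{thm_w_1_lower} gives the matching lower bounds, and the limit in Definition \ref{def_ricci} yields $\kappa=1$ and $\kappa=\tfrac{2}{3}$. The only difference is presentational — you make explicit the prism structure, the coset triangles, and the vertex-transitivity reduction that the paper leaves to its figures — and your adjacency bookkeeping ($\tau\sigma^{2}=\sigma\tau$, the reflection coset forming a triangle, the three $\tau$-edges forming the matching) checks out.
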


\begin{proof}
First, we prove that the Ricci curvature of  the edge $xy $ in Figure \ref{fig.1} 
as a type A  edge is $\kappa = 1$. 
\begin{figure}[htbp]
  \begin{minipage}[c]{0.5\hsize}
    \centering
    \includegraphics[width=8cm]{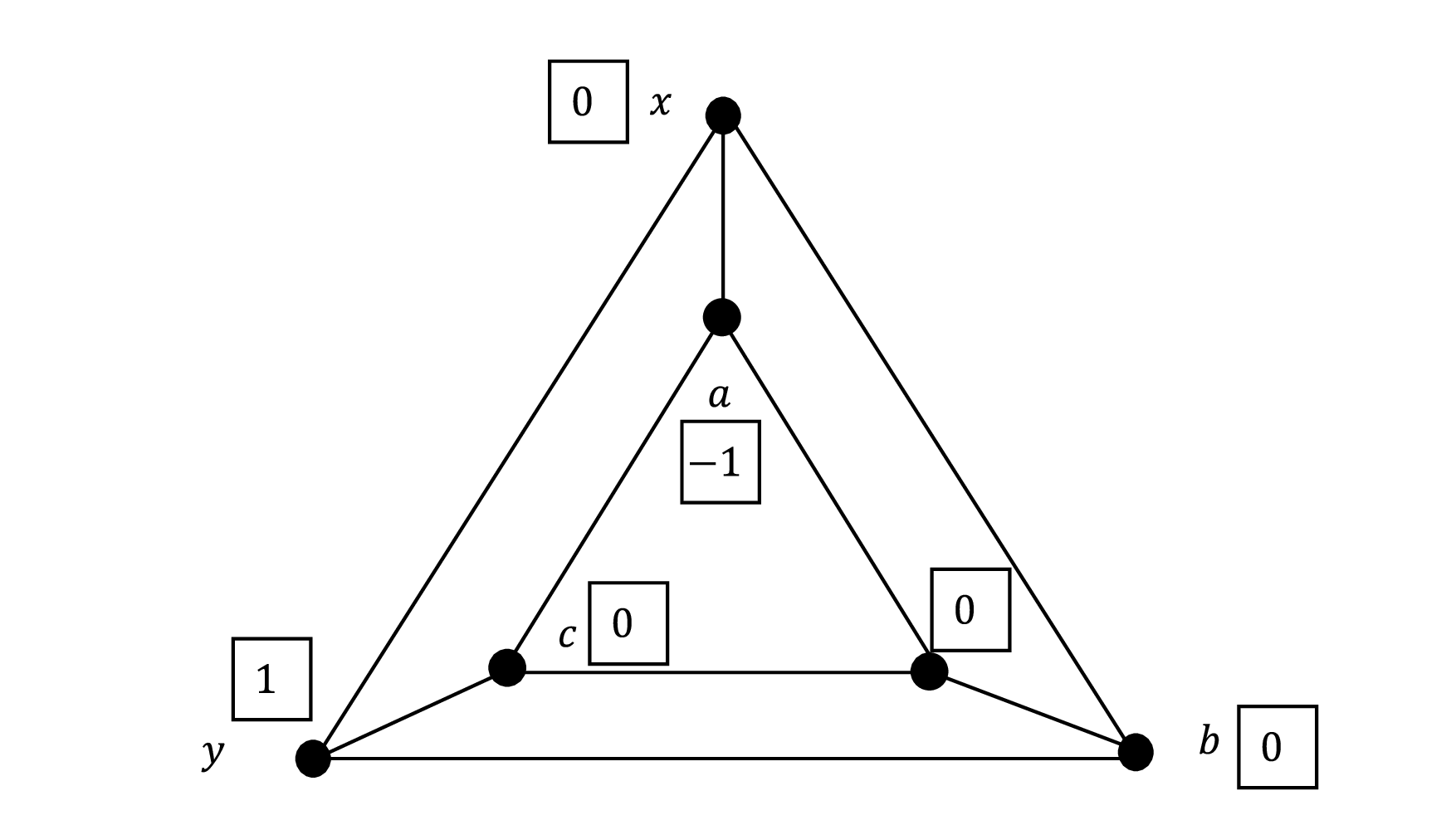}
    \caption{Type A in the Cayley graph of $D_{3}$}
    \label{fig.1}
  \end{minipage}
  \begin{minipage}[c]{0.5\hsize}
    \centering
    \includegraphics[width=8cm]{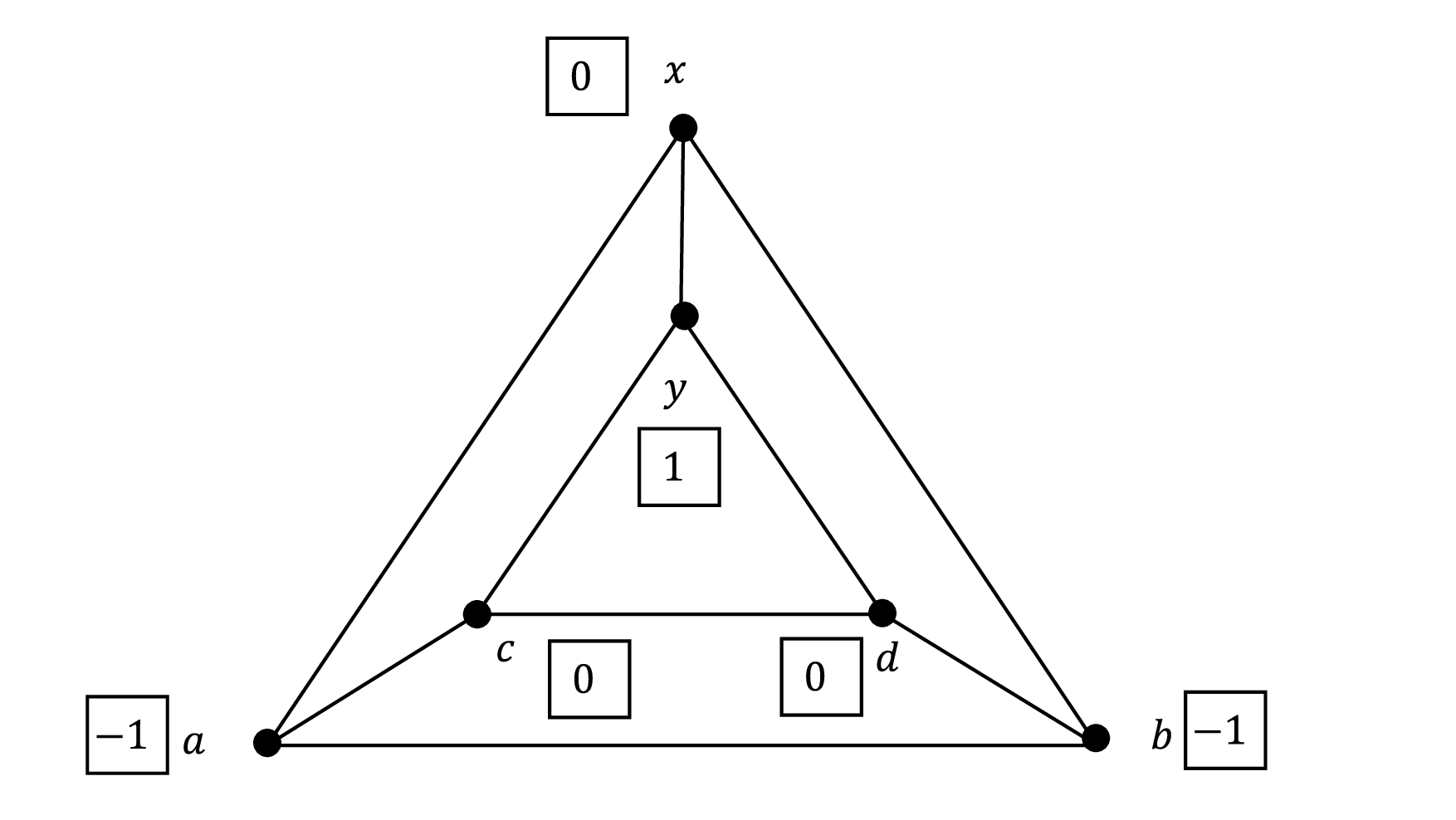}
    \caption{Type B in the Cayley graph of $D_{3}$}
    \label{fig.2}
  \end{minipage}
\end{figure}

We consider the transport cost from 
$\mu^{\alpha}_x$ to $\mu^{\alpha}_y$.
By Definition \ref{def_prob_measure}, the vertex $x$ has probability $\alpha$  for $\mu^{\alpha}_x$.
Each probability of vertex $y, a$ and $b$  
is  $\frac{1-\alpha}{3}$.
On the other hand, vertex $y$ has probability $\alpha$  for $\mu^{\alpha}_y$.
Each probability of vertex $x, b$ and $c$  
is  $\frac{1-\alpha}{3}$.
Satisfying Definition \ref{trans}, we can provide the following  transport cost: 
$x \rightarrow y \ :  1 \times \pi(x, y) = \alpha-\frac{1-\alpha}{3}, \
a \rightarrow c : 1 \times \pi(a, c) = \frac{1-\alpha}{3}$, 
%1 \times \pi(d, h) = \frac{1-\alpha}{3}, \ d \rightarrow h ; 1 \times \pi(b, f) =  \frac{1-\alpha}{3}$, 
and the amount of transport between the other vertices is zero.

From (\ref{W_1_inf}), we can estimate an upper bound of the Wasserstein distance 
for the above transport between probability measures.
We have the following result.
\begin{align}
W_1 \leq  \alpha.
\end{align}
From Definition \ref{def_alpha_ricci},
we have the following result about the Ollivier Ricci curvature.
\begin{align}
\kappa_{\alpha}(x, y) \geq  (1-\alpha).
\end{align}
By Definition \ref{def_ricci}, we have the following result of the lower bound of the Ricci curvature.
\begin{align}
\kappa(x, y) \geq  1.
\label{ineq.1}
\end{align}

Next, using a $1$-Lipschtiz function, we estimate the upper bound of 
the Ricci curvature by Theorem \ref{thm_w_1_lower}.
We define a $1$-Lipschtiz function as Figure $\ref{fig.1}$.
The number in each box beside each vertex is the value of the $1$-Lipschiz function. 
We have the following result from Theorem \ref{thm_w_1_lower} and this $1$- Lipschiz function.
\begin{align}
W_1 \geq  \alpha.
\end{align}
From Definition \ref{def_alpha_ricci},  
we have the following the Ollivier Ricci curvature.
\begin{align}
\kappa_{\alpha}(x, y) \leq  (1-\alpha).
\end{align}
Therefore, we have the upper bound of the Ricci curvature
by Definition \ref{def_ricci},
\begin{align}
\kappa(x, y)  \leq 1.
\label{ineq.2}
\end{align}

By (\ref{ineq.1}) and (\ref{ineq.2}), 
\begin{align}
\kappa(x, y) = 1.
\end{align}
Thus, the curvature of all type A edges is
$\kappa = 1$.\\

Next, we prove that the Ricci curvature of the edge $xy$ in Figure\ref{fig.2}
as a type B edge  is $\kappa = \frac{2}{3}$.
We consider the transport cost from 
$\mu^{\alpha}_x$ to $\mu^{\alpha}_y$.
By Definition \ref{def_prob_measure}, vertex $x$ has probability $\alpha$  for $\mu^{\alpha}_x$.
Each probability of vertex $y, a$ and $b$  
is  $\frac{1-\alpha}{3}$.
On the other hand, vertex $y$ has probability $\alpha$  for $\mu^{\alpha}_y$.
Each probability of vertex $x, c$ and $d$  
is  $\frac{1-\alpha}{3}$.
Satisfying Definition \ref{trans}, we can provide the following  transport cost:
$x \rightarrow y  : 1 \times \pi(x, y) = \alpha-\frac{1-\alpha}{3}, 
a \rightarrow c : 1 \times \pi(a, c) = \frac{1-\alpha}{3}, 
b  \rightarrow d : 1 \times \pi(b, d) = \frac{1-\alpha}{3}$,
and the amount of transport between the other vertices is zero.
From (\ref{W_1_inf}), we can estimate an upper bound of the Wasserstein distance 
for the above transport between probability measures.
We have the following result.
\begin{align}
W_1 \leq  \alpha +\frac{1}{3}(1-\alpha).
\end{align}
From Definition \ref{def_alpha_ricci},
we have the following result about the Ollivier Ricci curvature.
\begin{align}
\kappa_{\alpha}(x, y) \geq  \frac{2}{3}(1-\alpha).
\end{align}
By Definition \ref{def_ricci}, we obtain a lower bound of the Ricci curvature.
\begin{align}
\kappa(x, y) \geq   \frac{2}{3}.
\label{ineq.3}
\end{align}

Next, using a $1$-Lipschtiz function, we estimate an upper bound of 
the Ricci curvature by Theorem \ref{thm_w_1_lower}.
We define a $1$-Lipschtiz function as Figure $\ref{fig.2}$.
The number in each box beside each vertex is the value of the $1$-Lipschiz function.
We have the following result from Theorem \ref{thm_w_1_lower} and this $1$- Lipschiz function.
\begin{align}
W_1 \geq  \alpha+\frac{1}{3}(1-\alpha).
\end{align}
From Definition \ref{def_alpha_ricci},  
we have an upper bound for the Ollivier Ricci curvature
\begin{align}
\kappa_{\alpha}(x, y) \leq  \frac{2}{3}(1-\alpha).
\end{align}
Therefore, we have an upper bound of the Ricci curvature
by Definition \ref{def_ricci},
\begin{align}
\kappa(x, y)  \leq  \frac{2}{3}
\label{ineq.4}
\end{align}

By (\ref{ineq.3}) and (\ref{ineq.4}), 
\begin{align}
\kappa(x, y) =  \frac{2}{3}.
\end{align}
Thus, the curvature of all type B edges is 
$\kappa =  \frac{2}{3}$.

\rightline{\qed}
\end{proof}
\bigskip

Similarly,
let us consider the Ricci curvature of the Cayley graph for dihedral group $D_4$,
$D_5$, $D_6$
with a minimum  generating set $S=\{ \tau, \tau^{-1}, \sigma, \sigma^{-1} \}$.

\begin{prop}\label{D4} 
Let $\Gamma(D_{4}, S)$ be a Cayley graph of dihedral group $D_{4}$
with $S = \{ \tau, \tau^{-1}, \sigma, \sigma^{-1} \}$.
The Ricci curvature $\kappa$ of any edge in $\Gamma (D_{4}, S)$
is $\frac{2}{3}$.
\end{prop}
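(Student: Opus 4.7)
The plan is to follow the two-sided sandwich argument from the previous proposition for $\Gamma(D_3, S)$: for each edge type, exhibit a transport plan giving an upper bound on $W_1(\mu_x^\alpha, \mu_y^\alpha)$, and exhibit a $1$-Lipschitz function giving the matching lower bound via Theorem \ref{thm_w_1_lower}. In $\Gamma(D_4, S)$ both types should produce $W_1 = \alpha + \frac{1-\alpha}{3}$, whence $\kappa_\alpha(x,y) = \frac{2}{3}(1-\alpha)$ and $\kappa(x,y) = \frac{2}{3}$ by Definitions \ref{def_alpha_ricci} and \ref{def_ricci}. The structural difference from the $D_3$ case is that $\Gamma(D_4, S)$ is bipartite, so the type A triangle trick that gave $\kappa = 1$ in $D_3$ is no longer available and type A drops to match type B.

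For a type A edge $xy$ with $y = x\sigma$, I would first identify the neighborhoods
\[
N(x) = \{x\sigma, x\sigma^{-1}, x\tau\}, \qquad N(y) = \{x, x\sigma^2, x\sigma\tau\}.
\]
Two non-obvious adjacencies in $\Gamma(D_4, S)$ are needed: $x\sigma^{-1} \sim x\sigma^2$, which follows from $\sigma^4 = e$ (so $x\sigma^2 = x\sigma^{-1}\cdot\sigma^{-1}$), and $x\tau \sim x\sigma\tau$, which follows from $\tau\sigma\tau = \sigma^{-1}$ (so $x\sigma\tau = x\tau \cdot \sigma^{-1}$). These yield the transport plan
\[
x \to y : \alpha - \tfrac{1-\alpha}{3}, \qquad x\sigma^{-1} \to x\sigma^2 : \tfrac{1-\alpha}{3}, \qquad x\tau \to x\sigma\tau : \tfrac{1-\alpha}{3},
\]
of total cost $\alpha + \frac{1-\alpha}{3}$, so $W_1 \leq \alpha + \frac{1-\alpha}{3}$. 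For the matching lower bound I would take $f$ with $f \equiv 0$ on the three sources $\{x, x\sigma^{-1}, x\tau\}$ and $f \equiv -1$ on the three sinks $\{y, x\sigma^2, x\sigma\tau\}$, extended compatibly to the two remaining vertices of $D_4$. A direct check shows $f$ is $1$-Lipschitz, and $\sum_z f(z)\bigl(\mu_x^\alpha(z) - \mu_y^\alpha(z)\bigr)$ evaluates to $\alpha + \frac{1-\alpha}{3}$.

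For a type B edge $xy$ with $y = x\tau$, the neighborhoods
\[
N(x) = \{x\sigma, x\sigma^{-1}, x\tau\}, \qquad N(y) = \{x\sigma\tau, x\sigma^{-1}\tau, x\}
\]
pair up even more naturally, as $x\sigma \sim x\sigma\tau$ and $x\sigma^{-1} \sim x\sigma^{-1}\tau$ via right multiplication by $\tau$. The transport plan with steps $x \to y$, $x\sigma \to x\sigma\tau$, $x\sigma^{-1} \to x\sigma^{-1}\tau$ of masses $\alpha - \frac{1-\alpha}{3}$, $\frac{1-\alpha}{3}$, $\frac{1-\alpha}{3}$ respectively again gives $W_1 \leq \alpha + \frac{1-\alpha}{3}$, and the analogous $f$ (value $0$ on $\{x, x\sigma, x\sigma^{-1}\}$, value $-1$ on $\{y, x\sigma\tau, x\sigma^{-1}\tau\}$) supplies the matching lower bound.

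The main obstacle is mostly bookkeeping: enumerating the three-element neighborhoods using the relation $\tau\sigma^k = \sigma^{-k}\tau$, and verifying the $1$-Lipschitz condition on adjacent pairs within the support of $\mu_x^\alpha - \mu_y^\alpha$. Because each of the source set and the sink set is assigned a constant $f$-value, the only nontrivial constraints lie on cross-edges between the two sets, on which $|f(u) - f(v)| = 1 = d(u,v)$, so the check is immediate once the neighborhoods are laid out.
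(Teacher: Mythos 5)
Your proposal is correct and follows essentially the same argument as the paper: an explicit transport plan matching each remaining neighbor of $x$ to a neighbor of $y$ along a single edge gives $W_1 \leq \alpha + \frac{1-\alpha}{3}$, and a two-valued $1$-Lipschitz function gives the matching lower bound, yielding $\kappa = \frac{2}{3}$. The only difference is cosmetic: the paper observes that $\Gamma(D_4,S)$ is the cube graph with no distinction between type A and type B edges and so does a single computation on a labeled figure, whereas you treat the two edge types separately and verify the adjacencies algebraically from the relations $\sigma^4=e$ and $\sigma\tau=\tau\sigma^{-1}$.
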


The proof of this proposition is given in Appendix \ref{prop4proof}.

\begin{prop}\label{D5}
Let $\Gamma(D_{5}, S)$ be a Cayley graph of dihedral group $D_{5}$
with $S = \{ \tau, \tau^{-1}, \sigma, \sigma^{-1} \}$.
The Ricci curvature of any  type $A$ edge in $\Gamma(D_{5}, S)$ is $\kappa = \frac{1}{3}$. 
The Ricci curvature of any  type $B$ edge in $\Gamma(D_{5}, S)$is $\kappa = \frac{2}{3}$. 
\end{prop}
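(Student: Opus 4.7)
The plan is to imitate the two-step bounding of the $D_{3}$ case: first build an explicit transport plan that bounds $W_{1}(\mu_{x}^{\alpha},\mu_{y}^{\alpha})$ from above (so $\kappa_{\alpha}$ from below), then produce a $1$-Lipschitz test function realising the same value via Theorem \ref{thm_w_1_lower} (so $\kappa_{\alpha}$ from above); taking $\alpha\to 1$ in Definition \ref{def_ricci} gives $\kappa$. By vertex-transitivity it suffices to treat the representatives $(e,\sigma)$ of type A and $(e,\tau)$ of type B, and I write $p:=(1-\alpha)/3$. The graph $\Gamma(D_{5},S)$ is a ``pentagonal prism'': the rotation coset $\{e,\sigma,\sigma^{2},\sigma^{3},\sigma^{4}\}$ forms one $5$-cycle, the reflection coset $\{\tau,\sigma\tau,\ldots,\sigma^{4}\tau\}$ forms the other (with adjacency $\sigma^{a}\tau\sim\sigma^{a-1}\tau$ since $\tau\sigma=\sigma^{-1}\tau$), and five $\tau$-edges join $\sigma^{a}$ to $\sigma^{a}\tau$.

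For the type B edge $(e,\tau)$, the three non-central neighbours on each side are matched through $\tau$-edges: $\sigma\leftrightarrow\sigma\tau$ and $\sigma^{4}\leftrightarrow\sigma^{4}\tau$, together with $e\leftrightarrow\tau$ carrying the central mass. Sending $\alpha-p$ across the edge $e\tau$ and $p$ across each of the two other matching $\tau$-edges gives total cost $\alpha+p=(2\alpha+1)/3$. The matching $1$-Lipschitz function is the indicator $f$ of the rotation coset: it differs by $0$ on every edge of either $5$-cycle and by $1$ on every $\tau$-edge. A direct evaluation yields $\sum_{v}f(v)(\mu_{e}^{\alpha}(v)-\mu_{\tau}^{\alpha}(v))=\alpha+p$, so $W_{1}=\alpha+p$ and $\kappa(e,\tau)=2/3$.

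For the type A edge $(e,\sigma)$ the excess sits on $\{e,\sigma^{4},\tau\}$ and the deficit on $\{\sigma,\sigma^{2},\sigma\tau\}$. The pairings $e\to\sigma$ and $\tau\to\sigma\tau$ lie along single edges (cost $1$), but in the rotation $5$-cycle $\sigma^{4}$ and $\sigma^{2}$ are at distance $2$ (going $\sigma^{4}\to\sigma^{3}\to\sigma^{2}$) -- this is exactly the feature distinguishing $D_{5}$ from $D_{4}$, where the analogous pair is adjacent. The transport plan $e\to\sigma$ (mass $\alpha-p$), $\tau\to\sigma\tau$ (mass $p$), $\sigma^{4}\to\sigma^{2}$ (mass $p$, cost $2$) has total cost $\alpha+2p=(\alpha+2)/3$.

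The hardest step is the matching $1$-Lipschitz function for type A: it must realise $f(\sigma^{4})-f(\sigma^{2})=2$ while remaining admissible on all $14$ other edges of the tight prism. The assignment $f(e)=f(\sigma^{4})=f(\tau)=f(\sigma^{4}\tau)=1$, $f(\sigma^{2})=-1$, and $f=0$ on $\{\sigma,\sigma^{3},\sigma\tau,\sigma^{2}\tau,\sigma^{3}\tau\}$ works: a routine case check on the two $5$-cycles and the five $\tau$-edges verifies $|f(u)-f(v)|\le 1$ for every edge, and substitution yields $\sum_{v}f(v)(\mu_{e}^{\alpha}(v)-\mu_{\sigma}^{\alpha}(v))=\alpha+2p$. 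Combined with the transport plan this forces $W_{1}=(\alpha+2)/3$ and $\kappa(e,\sigma)=1/3$.
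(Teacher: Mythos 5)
Your proposal is correct and follows essentially the same strategy as the paper's proof in Appendix B.2: an explicit transport plan (including the cost-$2$ move between the two rotation-coset vertices at distance $2$, which is exactly the paper's $e\rightarrow c:\frac{1-\alpha}{3}\times 2$ step) gives $W_{1}\le \alpha+\frac{2}{3}(1-\alpha)$ for type A and $W_{1}\le\alpha+\frac{1}{3}(1-\alpha)$ for type B, and a matching $1$-Lipschitz function gives the reverse inequality via Theorem 1. The only difference is presentational: you write out the Lipschitz functions and the vertex-transitivity reduction explicitly, whereas the paper encodes the functions in its figures.
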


The proof of this proposition is given in Appendix \ref{prop5proof}.

\begin{prop}\label{D6}
Let $\Gamma(D_{6}, S)$ be a Cayley graph of dihedral group $D_{6}$
with $S = \{ \tau, \tau^{-1}, \sigma, \sigma^{-1} \}$
Ricci curvature of the edges  in Cayley graph $\Gamma (D_6, S)$.\\
Ricci curvature of type A   is $\kappa = 0$. \\
Ricci curvature of type B   is $\kappa = \frac{2}{3}$. \\
\end{prop}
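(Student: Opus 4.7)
The plan is to apply the same two-sided bound used in the $D_{3}$ case for each edge type: construct a transport plan to obtain an upper bound on $W_{1}(\mu_{e}^{\alpha}, \mu_{y}^{\alpha})$, then exhibit a matching $1$-Lipschitz function via Theorem~\ref{thm_w_1_lower} to obtain the reverse inequality. Together with Definitions~\ref{def_alpha_ricci} and \ref{def_ricci} these pin down $\kappa$, and left-translation invariance extends the computation from a single representative edge to every edge of the same type.

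For a representative type A edge $(e, \sigma)$ the neighbourhoods are $N(e) = \{\sigma, \sigma^{-1}, \tau\}$ and $N(\sigma) = \{e, \sigma^{2}, \sigma\tau\}$. I propose the transport $e \to \sigma$ of mass $\alpha - (1-\alpha)/3$ at distance $1$, $\tau \to \sigma\tau$ of mass $(1-\alpha)/3$ at distance $1$ (using the identity $\tau \sigma^{-1} = \sigma\tau$), and $\sigma^{-1} \to \sigma^{2}$ of mass $(1-\alpha)/3$ at distance $3$; the geometric point is that in $\Gamma(D_{6}, S)$ there is no shortcut through the reflection coset, so $d(\sigma^{-1}, \sigma^{2}) = 3$. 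The total cost is $1$, giving $W_{1} \leq 1$. For the lower bound I will take $f(g) := d(g, \sigma^{2})$, which is automatically $1$-Lipschitz; the three differences $f(e) - f(\sigma) = 1$, $f(\tau) - f(\sigma\tau) = 1$, and $f(\sigma^{-1}) - f(\sigma^{2}) = 3$ simultaneously saturate the $1$-Lipschitz bounds, forcing $W_{1} \geq 1$ and hence $\kappa = 0$.

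For a representative type B edge $(e, \tau)$ the structure is much friendlier: $\Gamma(D_{6}, S)$ decomposes into two hexagons of rotations and reflections joined by a perfect matching of $\tau$-edges, and each vertex of $N(e)$ pairs with a unique vertex of $N(\tau)$ via a single $\tau$-edge, namely $e \leftrightarrow \tau$, $\sigma \leftrightarrow \sigma\tau$, and $\sigma^{-1} \leftrightarrow \sigma^{-1}\tau$. Transporting in parallel along these three $\tau$-edges gives cost $\alpha + (1-\alpha)/3$, so $W_{1} \leq \alpha + (1-\alpha)/3$. For the matching lower bound I will take the indicator $f$ of the rotation coset defined by $f(\sigma^{k}) = 1$ and $f(\sigma^{k}\tau) = 0$; this is $1$-Lipschitz because every $\sigma^{\pm 1}$-edge stays inside a coset while every $\tau$-edge crosses, and evaluating $\sum_{v} f(v)(\mu_{e}^{\alpha}(v) - \mu_{\tau}^{\alpha}(v))$ reproduces $\alpha + (1-\alpha)/3$, yielding $\kappa = 2/3$.

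The delicate step is the type A argument: there is no slack in the cost, since the mass at $\sigma^{-1}$ must travel the full graph distance $3$ to reach $\sigma^{2}$, and a single $1$-Lipschitz function must be simultaneously tight on three independent pairs of vertices. Choosing the reference point $\sigma^{2}$ rather than a more symmetric alternative such as $\sigma^{3}$ is what makes all three constraints saturate at once; a less attentive choice would produce only $W_{1} \geq \alpha$ or similar, and fail to prove $\kappa = 0$.
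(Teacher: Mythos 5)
Your proposal is correct and follows essentially the same strategy as the paper's proof: an explicit transport plan gives the upper bound $W_1\leq 1$ (type A) and $W_1\leq \alpha+\frac{1-\alpha}{3}$ (type B), and a matching $1$-Lipschitz function via Theorem~\ref{thm_w_1_lower} gives the reverse inequality, yielding $\kappa=0$ and $\kappa=\frac{2}{3}$. The only differences are cosmetic: for type A the paper shifts every mass one step along the hexagon (so each piece travels distance $1$) rather than sending the $\sigma^{-1}$ mass a distance of $3$, and your explicit choices of Lipschitz function ($f(g)=d(g,\sigma^{2})$ and the rotation-coset indicator) are stated in closed form where the paper only displays them in a figure.
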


The proof of this proposition is given in Appendix \ref{prop6proof}.

\begin{thm}
Let $\Gamma(D_{n}, S)$ be a Cayley graph of dihedral group $D_{n}$
with $S = \{ \tau, \tau^{-1}, \sigma, \sigma^{-1} \}$ for $n \geq 6$. 
The Ricci curvature of any type $A$ edge in $\Gamma(D_{n}, S)$ is $\kappa = 0$. 
The Ricci curvature of any type $B$ edge in $\Gamma(D_{n}, S)$ is $\kappa = \frac{2}{3}$.
\end{thm}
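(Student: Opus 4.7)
The plan is to reduce the computation to a single representative edge of each type, exploiting the fact that left multiplication by any $g\in D_n$ is an automorphism of $\Gamma(D_n,S)$, and then to apply the same two-step template used in the earlier propositions: bound $W_1(\mu_x^\alpha,\mu_y^\alpha)$ above by an explicit transport plan, and below by a matching $1$-Lipschitz test function via Theorem~\ref{thm_w_1_lower}. Throughout we take $x=e$; since $\tau^2=e$, the generating set has only three distinct elements, so $\Gamma(D_n,S)$ is $3$-regular and $\mu_v^\alpha$ places mass $\alpha$ on $v$ and $(1-\alpha)/3$ on each of its three neighbors.

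\emph{Type A.} Take $y=\sigma$. Using $\tau\sigma=\sigma^{-1}\tau$ we have $N(x)=\{\sigma,\sigma^{-1},\tau\}$ and $N(y)=\{\sigma^2,e,\sigma\tau\}$. The key distance computation, where the hypothesis $n\geq 6$ enters, is
\[
d(\sigma^{-1},\sigma^2)=3,
\]
verified by direct enumeration of words of length $\leq 2$ in $\{\sigma,\sigma^{-1},\tau\}$ starting from $\sigma^{-1}$ and noting that for $n\geq 6$ neither the rotation cycle nor any excursion through the reflection coset produces a shorter route. The transport plan
\[
e\to\sigma:\ \alpha-\tfrac{1-\alpha}{3},\qquad \tau\to\sigma\tau:\ \tfrac{1-\alpha}{3},\qquad \sigma^{-1}\to\sigma^2:\ \tfrac{1-\alpha}{3},
\]
with costs $1$, $1$, $3$ respectively (the middle step uses $\tau\cdot\sigma^{-1}=\sigma\tau$), has total cost $1$, so $W_1\leq 1$. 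For the matching lower bound we take $f(v)=d(v,\sigma^2)$, which is automatically $1$-Lipschitz; a direct computation of $\sum_v f(v)\bigl(\mu_x^\alpha(v)-\mu_y^\alpha(v)\bigr)$ yields exactly $1$. Thus $W_1=1=d(x,y)$ and $\kappa(x,y)=0$.

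\emph{Type B.} Take $y=\tau$. Using $\tau\sigma^{\pm1}=\sigma^{\mp1}\tau$, we have $N(x)=\{\sigma,\sigma^{-1},\tau\}$ and $N(y)=\{\sigma^{-1}\tau,\sigma\tau,e\}$; for $n\geq 6$ these six vertices are pairwise distinct. Each of $\sigma$ and $\sigma^{-1}$ is joined by a single edge to the corresponding reflected vertex, so the transport plan
\[
e\to\tau:\ \alpha-\tfrac{1-\alpha}{3},\qquad \sigma\to\sigma\tau:\ \tfrac{1-\alpha}{3},\qquad \sigma^{-1}\to\sigma^{-1}\tau:\ \tfrac{1-\alpha}{3}
\]
uses only distance-$1$ steps and has total cost $\alpha+\tfrac{1-\alpha}{3}$, giving $W_1\leq \alpha+\tfrac{1-\alpha}{3}$. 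The matching lower bound comes from the $1$-Lipschitz function $f(v)=d(v,\tau)$, using $d(\sigma^{\pm 1},\tau)=2$; Kantorovich duality produces the same value. Hence $\kappa_\alpha(x,y)=\tfrac23(1-\alpha)$ and $\kappa(x,y)=\tfrac23$.

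The main obstacle is the distance claim $d(\sigma^{-1},\sigma^2)=3$ in the type A case; this is the single point at which the assumption $n\geq 6$ is essential, because for $n\in\{3,4,5\}$ there are shortcut paths along the rotation cycle that reduce this distance and are exactly responsible for the larger positive type~A curvatures $1,\tfrac23,\tfrac13$ recorded in the preceding propositions. Once this distance is in hand, the remainder of the argument is mechanical and reproduces verbatim the $D_6$ calculation of Proposition~\ref{D6}.
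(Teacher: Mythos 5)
Your proof is correct and follows essentially the same strategy as the paper's: an explicit transport plan yields $W_1\le 1$ for a type A edge and $W_1\le \alpha+\frac{1-\alpha}{3}$ for a type B edge, and a matching $1$-Lipschitz function via Theorem~\ref{thm_w_1_lower} supplies the reverse inequalities. The only cosmetic differences are that your type-A plan holds the shared mass fixed and moves the mass at $\sigma^{-1}$ a distance of $3$, whereas the paper shifts every mass one step along the rotation cycle (both plans cost $1$), and that you specify the Lipschitz functions explicitly as $d(\cdot,\sigma^2)$ and $d(\cdot,\tau)$ rather than reading their values off a figure.
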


\begin{proof}
First, we prove that  the Ricci curvature of the edge $bc$ in Figure \ref{fig.5}
as a type A edge in $\Gamma(D_{n}, S)$ is $\kappa = 0$.
\begin{figure}[htbp]
\begin{minipage}[c]{0.5\hsize}
   \centering
    \includegraphics[width=6cm]{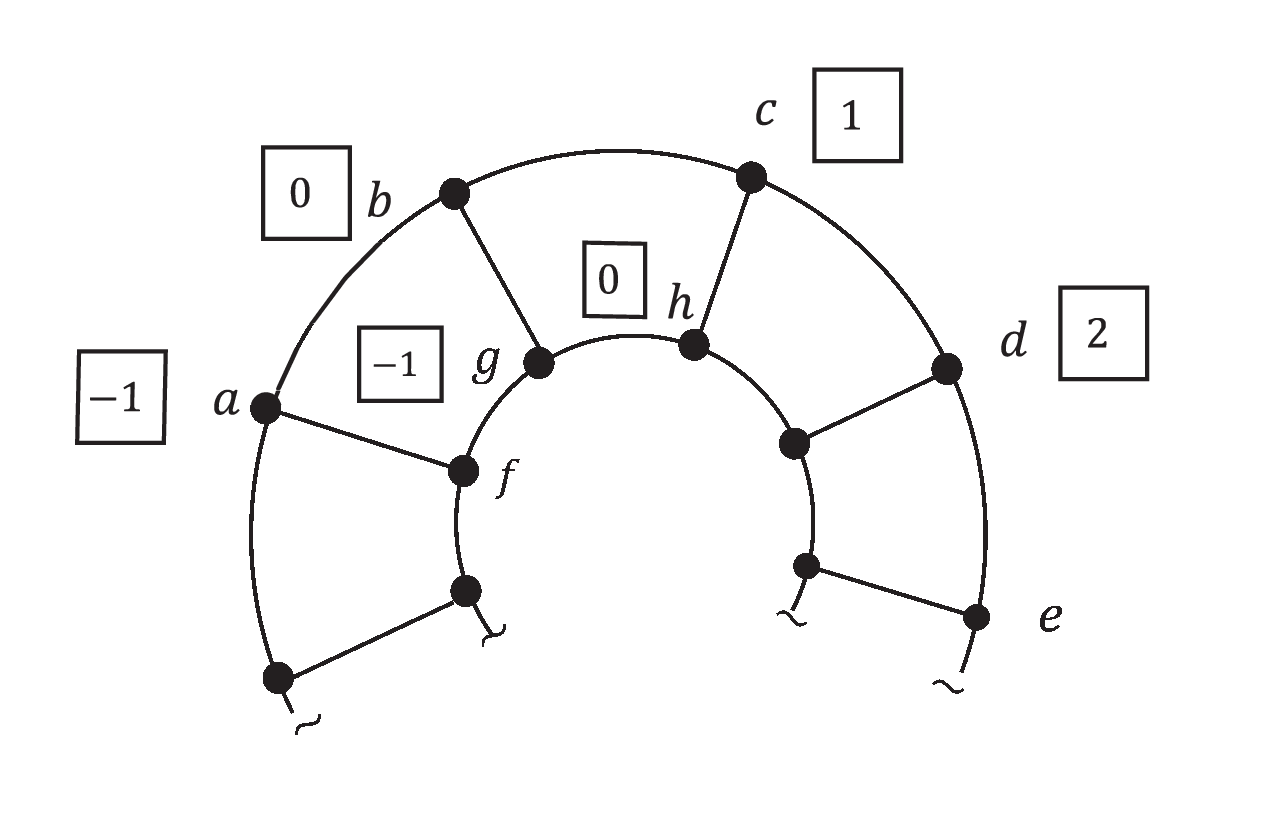}
    \caption{Type A in the Cayley graph of $D_{n}$}
    \label{fig.5}
\end{minipage}
  \begin{minipage}[c]{0.5\hsize}
   \centering
    \includegraphics[width=6cm]{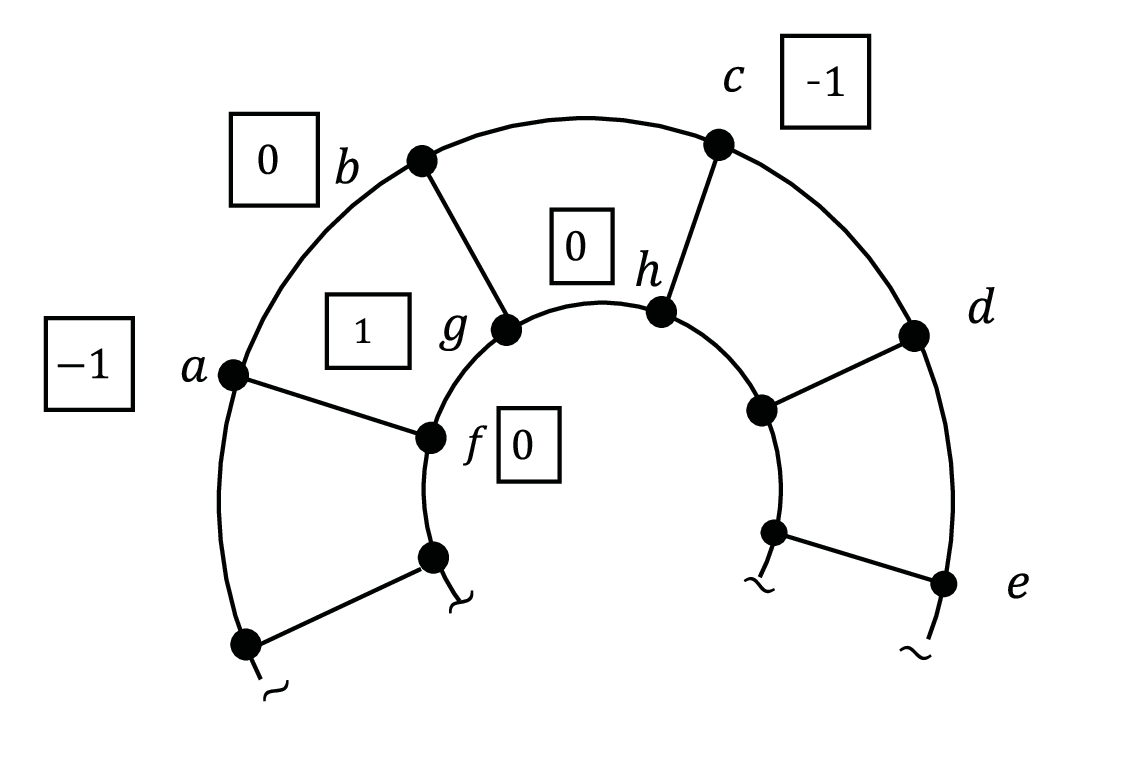}
    \caption{Type B in the Cayley graph of $D_{n}$}
    \label{fig.6}
  \end{minipage}
\end{figure}

Let us consider transport costs.
By Definition \ref{def_prob_measure}, vertex $b$ has probability $\alpha$, and
each vertex $a, c$ and $g$ has probability $\frac{1-\alpha}{3}$ for $\mu^{\alpha}_{b}$.
$\mu^{\alpha}_{c}$ is  determined by the similar way.
Satisfying Definition \ref{trans}, we can provide the following  transport costs:
$a \rightarrow b : \frac{1-\alpha}{3}, \
b \rightarrow c : \alpha, \
 c \rightarrow d :  \frac{1-\alpha}{3}, \
g \rightarrow h : \frac{1-\alpha}{3}$, 
and the amount of transport between the other vertices is zero.
From (\ref{W_1_inf}), we can estimate an upper bound of the Wasserstein distance 
for the above transport between probability measures.
We have the following result.
\begin{align}
W_1 \leq  \alpha + (1-\alpha).
\end{align}
From Definition \ref{def_alpha_ricci},
$\kappa_{\alpha} \geq  0$ about the Ollivier Ricci curvature.
By Definition \ref{def_ricci}, we have the following result of a lower bound of the Ricci curvature.
\begin{align}
\kappa \geq  0.
\label{ineq.13}
\end{align}

Next, using a $1$-Lipschtiz function, we estimate an upper bound of 
the Ricci curvature by Theorem \ref{thm_w_1_lower}.
We define a $1$-Lipschtiz function as Figure $\ref{fig.5}$.
The number in each box beside each vertex is the value of the $1$-Lipschiz function.
The condition $n \geq 6$ is used to define this $1$-Lipschiz function.
We have the following result from Theorem \ref{thm_w_1_lower} and this $1$- Lipschiz function.
\begin{align}
W_1 \geq  \alpha + (1-\alpha).
\end{align}
From Definition \ref{def_alpha_ricci},
$\kappa_{\alpha} \leq 0$ about the Ollivier Ricci curvature.
Therefore, we have the following upper bound of the Ricci curvature.
\begin{align}
\kappa  \leq 0.
\label{ineq.14}
\end{align}
By (\ref{ineq.13}) and (\ref{ineq.14}), the curvature of all type A edges is 
$\kappa = 0$.
\bigskip

Next, we prove that the Ricci curvature of type B is $\kappa =  \frac{2}{3}$.
Now, we calculate the Ricci curvature of the edge $bg$, which is the type$B$,
 in Figure \ref{fig.6}.
We consider transport costs.
By Definition \ref{def_prob_measure}, vertex $b$ has probability $\alpha$,
and each vertex $a, c$ and $g$ has probability $\frac{1-\alpha}{3}$ for $\mu^{\alpha}_{b}$.
$\mu^{\alpha}_{g}$ is determined similarly.
Satisfying Definition \ref{trans}, we can provide the following  transport costs:
$b \rightarrow g : \alpha- \frac{1-\alpha}{3}, \ 
a \rightarrow f :  \frac{1-\alpha}{3}, \
c \rightarrow h : \frac{1-\alpha}{3}$, 
and the amount of transport between the other vertices is zero.
From (\ref{W_1_inf}), we can estimate an upper bound of the Wasserstein distance 
for the above transport between probability measures.
We have the following result.
\begin{align}
W \leq  \alpha + \frac{1-\alpha}{3}.
\end{align}
From Definition \ref{def_alpha_ricci},
$\kappa_{\alpha} \geq  2 \times \frac{1-\alpha}{3}$ about the Ollivier Ricci curvature.
By Definition \ref{def_ricci}, we have the following result of 
the lower bound of the Ricci curvature for type B.
\begin{align}
\kappa \geq  \frac{2}{3}.
\label{ineq.15}
\end{align}

Next, using a $1$-Lipschtiz function, we estimate an upper bound of 
the Ricci curvature by Theorem \ref{thm_w_1_lower}.
We define a $1$-Lipschtiz function as Figure $\ref{fig.6}$.
The number in each box beside each vertex is the value of the $1$-Lipschiz function.
We have the following result from Theorem \ref{thm_w_1_lower} and this $1$- Lipschiz function.

\begin{align}
W_1 \geq  \alpha + \frac{1}{3}(1-\alpha).
\end{align}
From Definition \ref{def_alpha_ricci},  
$\kappa_{\alpha} \leq \frac{2}{3}(1-\alpha)$ about the Ollivier Ricci curvature.
Therefore, we have the following  upper bound of the Ricci curvature
by Definition \ref{def_ricci},
\begin{align}
\kappa  \leq \frac{2}{3}.
\label{ineq.16}
\end{align}
By ($\ref{ineq.15}$) and ($\ref{ineq.16}$), the Ricci curvature of all type B edges is
$\kappa = \frac{2}{3}$.

\rightline{\qed}
\end{proof}

\section{The Ricci curvature of the Cayley graph 
for generalized quaternion groups}\label{quate}
\subsection{generalized quaternion group}

$Q_{8}=\langle i, j , k ~| ~i^2 = j^2 = k^2 =ijk \rangle$ is known as the quaternion group.
 Its generalization is as follows.

\begin{df}\cite{mag}
The generalized quaternion group $Q_{4m}$ is defined by
$$Q_{4m}= \langle \sigma, \tau ~|~ \sigma^{2m}= e, \tau^{2}= \sigma^{m},
 \tau^{-1}\sigma \tau = \sigma^{-1} \rangle, where  \  m > 1.$$
\end{df}

We consider the Cayley graph for the generalized quaternion group 
with the generating set $S=\{ \sigma, \tau, \sigma^{-1}, \tau^{-1} \}$.

We distinguish between the two sets of edges.
One is the edge set $A= \{ (g, g \sigma)~ |~ g \in Q_{4m} \}$, 
and the other is edge set $B=\{(g, g \tau)~ |~ g \in Q_{4m} \}$. 
We call the edge in sets $A$ and $B$ type $A$ and type $B$, respectively.

\subsection{Ricci curvature of Cayley graph for generalized quaternion group $Q_{8}$ 
with the generating set $S = \{ \sigma, \tau, \sigma^{-1}, \tau^{-1} \}$}

\begin{prop}\label{Q4}
For the Cayley graph $\Gamma (Q_8, S)$, 
The Ricci curvature $\kappa$ of any edge in $\Gamma (Q_{8}, S)$
is $\frac{1}{2}$.
\end{prop}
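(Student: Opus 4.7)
The plan is to follow the two-step template from the $D_3$ proposition: for an edge $xy$ of type A or type B, first exhibit a transport plan giving an upper bound $W_1(\mu_x^{\alpha}, \mu_y^{\alpha}) \leq \alpha + \frac{1-\alpha}{2}$, then construct a matching $1$-Lipschitz function via Theorem \ref{thm_w_1_lower}. Definitions \ref{def_alpha_ricci} and \ref{def_ricci} then pin down $\kappa(x,y) = \frac{1}{2}$.

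The first (and main) task is to describe the local structure of $\Gamma(Q_8, S)$ near an edge. The group $Q_8$ has $8$ elements and is $4$-regular, so the vertex set is exhausted by $x$, $y$, the three outer neighbors $N(x) \setminus \{y\}$, and the three outer neighbors $N(y) \setminus \{x\}$. Using the relations $\sigma^4 = e$, $\tau^2 = \sigma^2$, and $\tau^{-1} \sigma \tau = \sigma^{-1}$, I would verify by a short direct enumeration that these two triples are disjoint and induce a complete bipartite graph $K_{3,3}$ (all cross-distances equal $1$), while any two vertices inside the same triple lie at distance $2$. The same picture appears for both edge types; indeed, in $Q_8$ both $\sigma$ and $\tau$ have order $4$ and the exchange $\sigma \leftrightarrow \tau$ respects the defining relations, so type A and type B edges play symmetric roles and can be treated by a single argument.

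Given this $K_{3,3}$ structure, the upper-bound transport plan is immediate: retain $\frac{1-\alpha}{4}$ at each of $x$ and $y$ (cost $0$), push the excess $\alpha - \frac{1-\alpha}{4}$ along the edge $xy$, and arbitrarily match the three outer $x$-bundles of mass $\frac{1-\alpha}{4}$ with the three outer $y$-bundles across the $K_{3,3}$ (each step at distance $1$), yielding total cost $\alpha + \frac{1-\alpha}{2}$. Applying Definitions \ref{def_alpha_ricci} and \ref{def_ricci} then gives $\kappa(x,y) \geq \frac{1}{2}$.

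For the matching lower bound I would set $f(x) = 0$, $f(y) = -1$, $f \equiv 1$ on $N(x) \setminus \{y\}$, and $f \equiv 0$ on $N(y) \setminus \{x\}$. Since $|Q_8| = 8$, this already specifies $f$ on the whole vertex set, and the distance facts from the previous step show that $f$ is $1$-Lipschitz: the tightest constraints $|f(v) - f(y)| = 2$ arising between outer $x$-neighbors and $y$ are exactly saturated by the distance-$2$ pairs identified above, and similarly for $x$ versus the outer $y$-neighbors. The pairing $\sum_v f(v)(\mu_x^{\alpha}(v) - \mu_y^{\alpha}(v))$ then evaluates to $\alpha + \frac{1-\alpha}{2}$, matching the transport cost and forcing $\kappa(x,y) \leq \frac{1}{2}$. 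The substantive work lies in verifying the $K_{3,3}$-plus-within-triple-distance-$2$ picture at step two; once that is in hand, the rest of the argument runs on autopilot following the $D_3$ template.
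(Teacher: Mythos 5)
Your proposal is correct and follows essentially the same route as the paper: an explicit transport plan matching the three outer neighbors of $x$ to the three outer neighbors of $y$ across edges gives $W_1 \leq \alpha + \frac{1-\alpha}{2}$, and a $1$-Lipschitz function taking values $1, 0, -1, 0$ on $N(x)\setminus\{y\}$, $\{x\}$, $\{y\}$, $N(y)\setminus\{x\}$ gives the matching lower bound. The only difference is presentational: you justify the local picture by observing that $\Gamma(Q_8,S)$ is bipartite with the two outer triples inducing a $K_{3,3}$ (indeed the whole graph is $K_{4,4}$) and handle both edge types at once via the automorphism exchanging $\sigma$ and $\tau$, whereas the paper verifies the same distances from labelled figures and treats types A and B separately.
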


The proof of this proposition is given in Appendix \ref{propQ4proof}.
\bigskip

\begin{prop}\label{Q6}
For the Cayley graph $\Gamma (Q_{12}, S)$, 
The Ricci curvature of any type A edge in $\Gamma (Q_{12}, S)$  
is $\kappa = \frac{1}{4}$, 
and the Ricci curvature of any  type B edge in $\Gamma (Q_{12}, S)$
 is $\kappa = \frac{1}{2}$. 
\end{prop}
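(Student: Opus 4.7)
The plan is to follow the bracketing scheme used in Proposition \ref{Q4}: for each of the two edge types I produce an explicit transport plan yielding an upper bound on $W_{1}(\mu_{x}^{\alpha},\mu_{y}^{\alpha})$ (hence a lower bound on $\kappa$), and an explicit $1$-Lipschitz function which, via Theorem \ref{thm_w_1_lower}, gives a lower bound on $W_{1}$ (hence an upper bound on $\kappa$). Because left multiplication by $g^{-1}$ is an automorphism of $\Gamma(Q_{12},S)$, it suffices to treat the representative edges $(e,\sigma)$ and $(e,\tau)$, and the entire construction fits inside the $12$-vertex graph as in Figures \ref{fig.1}--\ref{fig.6}. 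Throughout I use the defining relations $\sigma\tau=\tau\sigma^{-1}$ and $\tau^{2}=\sigma^{3}$ (so also $\tau^{-2}=\sigma^{3}$); the latter produces the cross-cycle shortcut $d(\sigma^{i},\sigma^{i+3})=2$ and, crucially for type B, the single edge $\tau^{-1}\to \tau^{-1}\cdot\tau^{-1}=\sigma^{3}$.

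For the type A edge $(e,\sigma)$ the neighbors of $e$ and $\sigma$ are $\{\sigma,\sigma^{-1},\tau,\tau^{-1}\}$ and $\{e,\sigma^{2},\sigma\tau,\sigma\tau^{-1}\}$. The relation $\sigma\tau=\tau\sigma^{-1}$ makes $\tau$ adjacent to $\sigma\tau$ and $\tau^{-1}$ adjacent to $\sigma\tau^{-1}$, while $\sigma^{-1}$ reaches $\sigma^{2}$ in two steps by $\sigma^{-1}\to\sigma^{-1}\tau\to\sigma^{-1}\tau^{2}=\sigma^{2}$. I transport $\alpha-(1-\alpha)/4$ from $e$ to $\sigma$ and $(1-\alpha)/4$ along each of $\tau\to\sigma\tau$, $\tau^{-1}\to\sigma\tau^{-1}$, $\sigma^{-1}\to\sigma^{2}$, for total cost $\alpha+3(1-\alpha)/4$, giving $\kappa\ge 1/4$. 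For the matching upper bound I propose a $1$-Lipschitz $f$ attaining $f(e)-f(\sigma)=1$, $f(\sigma^{-1})-f(\sigma^{2})=2$, $f(\tau)-f(\sigma\tau)=1$ and $f(\tau^{-1})-f(\sigma\tau^{-1})=1$; for instance $(f(\sigma^{0}),\dots,f(\sigma^{5}))=(1,0,0,0,1,2)$, $f(\tau)=f(\tau^{-1})=1$ and $f(\sigma\tau)=f(\sigma\tau^{-1})=0$, with the remaining $\tau$-layer values chosen in $[0,1]$. A direct check gives $\sum f\cdot(\mu_{e}^{\alpha}-\mu_{\sigma}^{\alpha})=\alpha+3(1-\alpha)/4$, and hence $\kappa\le 1/4$.

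For the type B edge $(e,\tau)$ the neighbors of $\tau$ are $\{e,\sigma^{3},\sigma^{-1}\tau,\sigma\tau\}$, and each nontrivial neighbor of $e$ has a distance-$1$ partner among them: $\sigma\leftrightarrow\sigma\tau$ and $\sigma^{-1}\leftrightarrow\sigma^{-1}\tau$ (via $\tau$), and $\tau^{-1}\leftrightarrow\sigma^{3}$ (via $\tau^{-1}$, using $\tau^{-2}=\sigma^{3}$). The corresponding transport has total cost $\alpha+(1-\alpha)/2$, giving $\kappa\ge 1/2$. A matching $1$-Lipschitz function assigns value $1$ on $\{e,\sigma,\sigma^{-1},\tau^{-1}\}$ and $0$ on $\{\tau,\sigma\tau,\sigma^{-1}\tau,\sigma^{3}\}$, with the remaining four vertices $\sigma^{2},\sigma^{4},\sigma^{2}\tau,\sigma^{4}\tau$ taking values in $[0,1]$ so as to preserve the Lipschitz condition; then $\sum f\cdot(\mu_{e}^{\alpha}-\mu_{\tau}^{\alpha})=\alpha+(1-\alpha)/2$, giving $\kappa\le 1/2$. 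The main obstacle for both types is verifying $1$-Lipschitzness over all $12$ vertices: the relation $\tau^{2}=\sigma^{3}$ produces nontrivial length-$2$ shortcuts both within the $\sigma$-cycle and between the $\sigma$- and $\tau$-layers, so the proposed layer-wise values must be cross-checked against every edge (and in particular the tight constraints $|f(\sigma^{-1})-f(\sigma^{2})|=2$ and $|f(\sigma^{i})-f(\sigma^{i+3}\tau)|\le d(\sigma^{i},\sigma^{i+3}\tau)$ leave no slack). Once this bookkeeping is done, the passage from $\kappa_{\alpha}$ to $\kappa$ via Definition \ref{def_ricci} is immediate, exactly as in Proposition \ref{Q4}.
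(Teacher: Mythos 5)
Your proposal is correct and follows essentially the same strategy as the paper: a concrete transport plan of cost $\alpha+\tfrac{3}{4}(1-\alpha)$ (type A, with one neighbor shipped distance $2$ via $\sigma^{-1}\to\sigma^{-1}\tau\to\sigma^{2}$) resp. $\alpha+\tfrac{1}{2}(1-\alpha)$ (type B) for the lower bound on $\kappa$, matched by a $1$-Lipschitz function via Theorem \ref{thm_w_1_lower} for the upper bound, exactly as in the paper's Appendix proof (which uses vertex labels $V_{1},\dots,V_{12}$ and figures instead of group elements). I verified that your explicit functions are indeed $1$-Lipschitz on all edges of $\Gamma(Q_{12},S)$ (for type A the two free values $f(\sigma^{2}\tau)$ and $f(\sigma^{5}\tau)$ are forced to equal $1$, which is admissible) and that they reproduce the stated sums, so the argument is complete.
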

 
The proof of this proposition is given in Appendix \ref{propQ6proof}.

The Ricci curvature of Cayley graph for  generalized quaternion group $Q_{4m} (m \geq 4)$
with the generating set $S=\{ \sigma, \tau, \sigma^{-1}, \tau^{-1} \} $ is given as follows:

\begin{thm}
For the Cayley graph $\Gamma (Q_{4m}, S)$ with $m \geq 4$,
the Ricci curvature of any type A edge in $\Gamma (Q_{4m}, S)$ is $\kappa = 0$, 
and the Ricci curvature of any type B edge in $\Gamma (Q_{4m}, S)$ is $\kappa =\frac{1}{2}$. 
\end{thm}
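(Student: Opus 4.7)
The proof will mirror the template used for $\Gamma(D_n, S)$ with $n \geq 6$: for each edge type I exhibit a transport plan giving $W_1 \leq C$ for an explicit constant $C$, and then a $1$-Lipschitz function that, via Theorem \ref{thm_w_1_lower}, gives $W_1 \geq C$. Since left multiplication is a graph automorphism of $\Gamma(Q_{4m}, S)$, it suffices to treat one representative edge of each type; I take $(e, \sigma)$ for type A and $(e, \tau)$ for type B. The supports of $\mu_e^\alpha$, $\mu_\sigma^\alpha$ and $\mu_\tau^\alpha$ are read off from $S$ by applying the defining relations $\sigma\tau = \tau\sigma^{-1}$, $\tau^2 = \sigma^m$, and hence $\tau^{-1} = \sigma^m\tau$.

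For the type A edge, $\mu_\sigma^\alpha$ is supported on $\{\sigma, e, \sigma^2, \sigma\tau, \sigma\tau^{-1}\}$. The natural transport plan moves $e \to \sigma$, $\tau \to \sigma\tau$ and $\tau^{-1} \to \sigma\tau^{-1}$ along single edges, and then carries the leftover mass $\frac{1-\alpha}{4}$ from $\sigma^{-1}$ to $\sigma^2$. The hypothesis $m \geq 4$ enters precisely here: it implies $d(\sigma^{-1}, \sigma^2) = 3$, because any $\tau$-shortcut through $\tau^2 = \sigma^m$ needs at least $m - 1 \geq 3$ additional $\sigma$-letters, whereas at $m = 3$ the coincidence $\sigma^3 = \tau^2$ would shrink the distance to $2$ (which is exactly what produces $\kappa = 1/4$ in Proposition \ref{Q6}). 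The resulting cost is $1$, so $W_1 \leq 1$ and $\kappa \geq 0$. For the matching bound I take the $1$-Lipschitz function defined on the support by $f(\sigma^i) = -i$ for $\sigma^i \in \{\sigma^{-1}, e, \sigma, \sigma^2\}$, together with $f(\tau) = f(\tau^{-1}) = 0$ and $f(\sigma\tau) = f(\sigma\tau^{-1}) = -1$, extended to a $1$-Lipschitz function on all of $Q_{4m}$; direct evaluation gives $\sum_x f(x)(\mu_e^\alpha(x) - \mu_\sigma^\alpha(x)) = 1$, so $W_1 = 1$ and $\kappa = 0$.

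For the type B edge, $\mu_\tau^\alpha$ is supported on $\{\tau, e, \sigma\tau, \sigma^{-1}\tau, \sigma^m\}$ (using $\tau \cdot \tau = \sigma^m$ and $\tau \cdot \tau^{-1} = e$), and every deficit vertex lies at distance one from a source: $e \to \tau$, $\sigma \to \sigma\tau$, $\sigma^{-1} \to \sigma^{-1}\tau$, and crucially $\tau^{-1} \to \sigma^m$ via $\tau^{-1} \cdot \tau^{-1} = \tau^{-2} = \sigma^m$. The total cost is $\alpha + \frac{1-\alpha}{2}$, giving $\kappa \geq \frac{1}{2}$. For the matching upper bound I assign $f \equiv 1$ on the rotation vertices $\{e, \sigma, \sigma^{-1}, \sigma^m\}$, $f \equiv 0$ on $\{\tau, \sigma\tau, \sigma^{-1}\tau\}$, and $f(\tau^{-1}) = 2$; the asymmetric value at $\tau^{-1}$ is precisely what makes the reversed pair $(\tau^{-1}, \sigma^m)$ contribute $+1$ to the Lipschitz sum, and it is compatible with $|f(e) - f(\tau^{-1})| = 1 = d(e, \tau^{-1})$ and $|f(\tau) - f(\tau^{-1})| = 2 = d(\tau, \tau^{-1})$. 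Evaluating yields $\sum_x f(x)(\mu_e^\alpha(x) - \mu_\tau^\alpha(x)) = \frac{1+\alpha}{2}$, matching the transport cost, so $\kappa = \frac{1}{2}$.

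The main obstacle, I expect, is the distance bookkeeping needed to verify that each proposed $f$ is globally $1$-Lipschitz in $Q_{4m}$; once the single nontrivial fact $d(\sigma^{-1}, \sigma^2) = 3$ is established (the only place where $m \geq 4$ is really used), the remaining Lipschitz constraints on the at most eight vertices in the two supports are slack, and the rest of the argument reduces to substituting into the definitions of $W_1$, $\kappa_\alpha$, and $\kappa$.
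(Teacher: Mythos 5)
Your proposal is correct and follows essentially the same strategy as the paper: reduce to one representative edge of each type by homogeneity, exhibit an explicit transport plan to get $W_1\le 1$ (type A) and $W_1\le \alpha+\frac{1-\alpha}{2}$ (type B), and match these with explicit $1$-Lipschitz functions via Theorem \ref{thm_w_1_lower}, with $m\ge 4$ entering only through $d(\sigma^{-1},\sigma^2)=3$. The only cosmetic difference is your choice of coupling for type A (one unit of mass moved distance $3$ rather than the paper's chain of unit-distance shifts), which yields the same total cost.
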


\begin{proof}
\begin{figure}[htbp]
\begin{minipage}[c]{0.5\hsize}
    \centering
    \includegraphics[width=8cm]{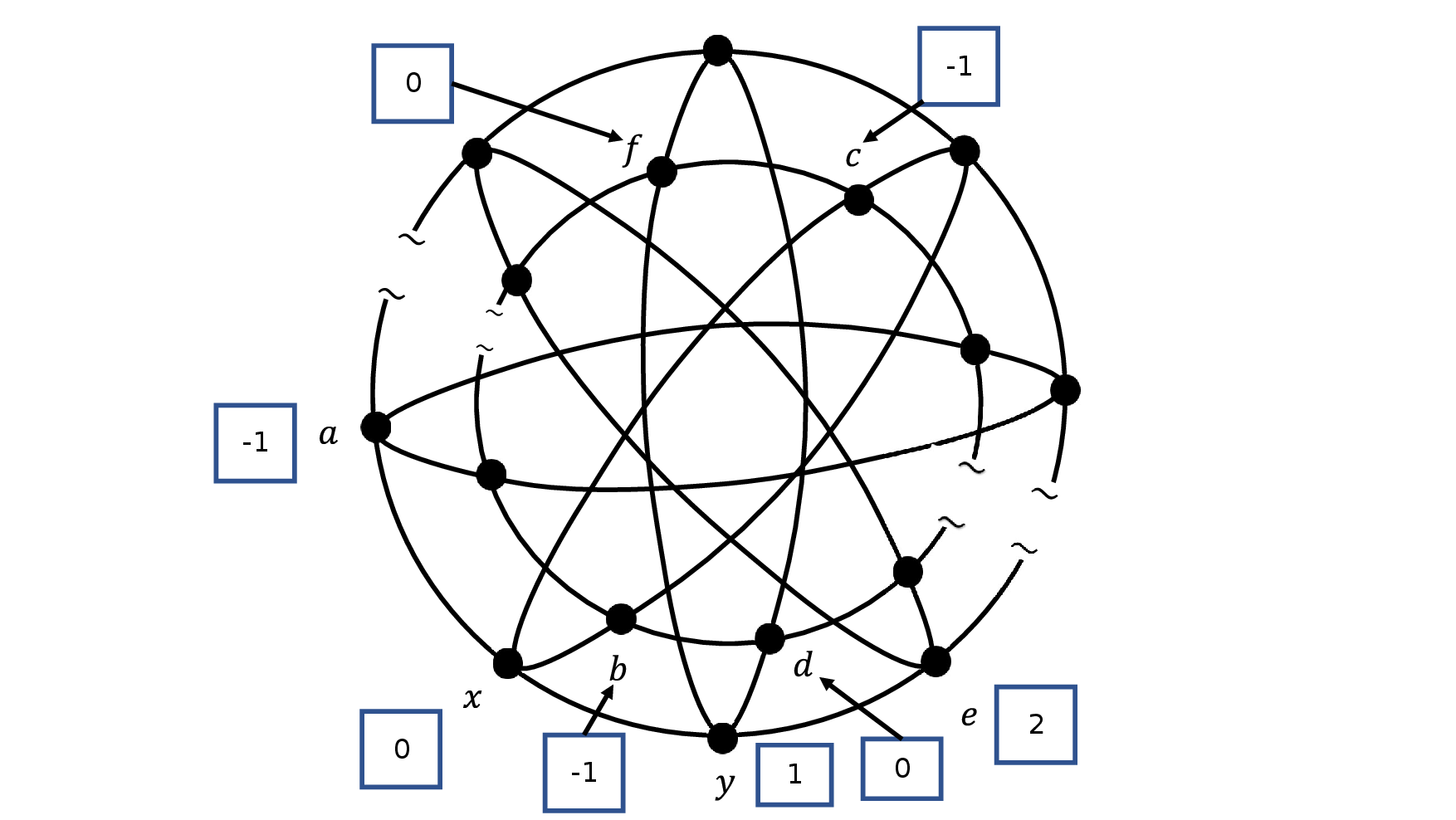}
    \caption{Type A in the Cayley graph of $Q_{4m}$}
    \label{fig.7a}
  \end{minipage}
  \begin{minipage}[c]{0.5\hsize}
    \centering
    \includegraphics[width=8cm]{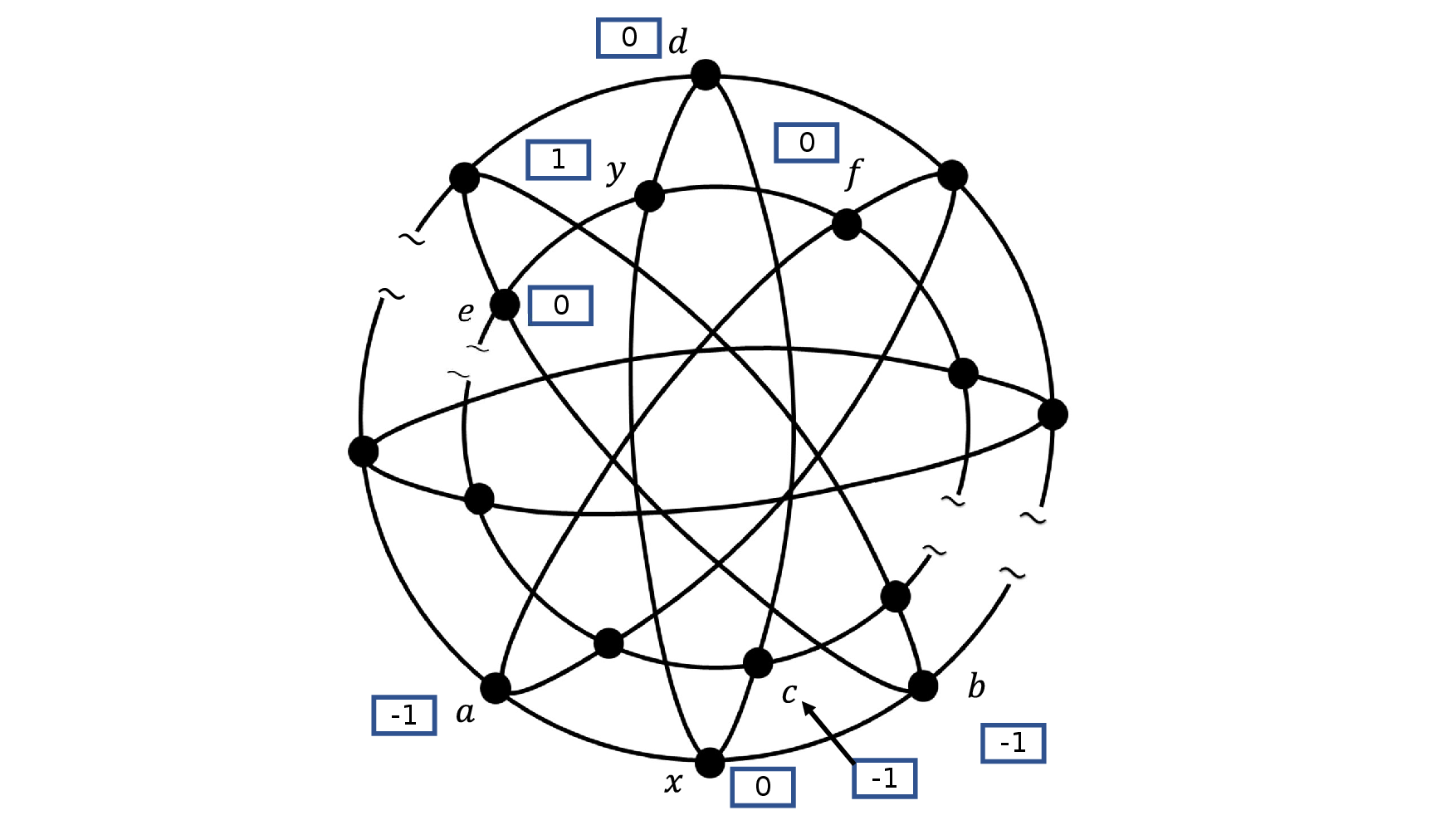}
    \caption{Type B in the Cayley graph of $Q_{4m}$}
    \label{fig.8a}
  \end{minipage}
\end{figure}

First, we prove that the Ricci curvature of type A  is $\kappa = 0$.

Now, we calculate the Ricci curvature of the edge $xy$, which is the type A, i.e. $y=x\sigma$ 
in Figure \ref{fig.7a}.
Satisfying Definition \ref{trans}, we can provide the following  transport cost 
between $\mu^{\alpha}_{x}$ and $\mu^{\alpha}_{y}$ : 
$ a\rightarrow x : \frac{1-\alpha}{4}, \
x \rightarrow  y  :\alpha, \
y \rightarrow e : \frac{1-\alpha}{4}, \
c \rightarrow f: \frac{1-\alpha}{4}, \
b \rightarrow d: \frac{1-\alpha}{4}$, 
and the amount of transport between the other vertices is zero.

From (\ref{W_1_inf}), we can estimate an upper bound of the Wasserstein distance 
for the above transport between probability measures.
We have the following result.
\begin{align}
W_1 \leq  \alpha + (1-\alpha).
\end{align}
From Definition \ref{def_alpha_ricci},
$\kappa_{\alpha} \geq  0$ about the Ollivier Ricci curvature.
By Definition \ref{def_ricci}, we have the following result 
of the lower bound of the Ricci curvature.
\begin{align}
\kappa \geq  0.
\label{ineq.29}
\end{align}

Next, using a $1$-Lipschtiz function, we estimate an upper bound of 
the Ricci curvature by Theorem \ref{thm_w_1_lower}.
We define a $1$-Lipschtiz function as Figure \ref{fig.7a}.
The number in each box beside each vertex is the value of the $1$-Lipschiz function.
We have the following result from Theorem \ref{thm_w_1_lower} and this $1$- Lipschiz function.

\begin{align}
W_1 \geq   \alpha +(1-\alpha).
\end{align}
From Definition \ref{def_alpha_ricci},
$\kappa_{\alpha} \leq 0$ about the Ollivier Ricci curvature.
Therefore, we have the following upper bound of the Ricci curvature.
\begin{align}
\kappa  \leq 0.
\label{ineq.30}
\end{align}

By ($\ref{ineq.29}$) and ($\ref{ineq.30}$),  the curvature of all type A edges is
$\kappa = 0$.
\bigskip

Next, we prove that the Ricci curvature of type B is $\kappa =  0$.
Now, we calculate the Ricci curvature of the edge $xy$, which is the type B as in Figure \ref{fig.8a}.
For $\mu^{\alpha}_{x}$ and $\mu^{\alpha}_{y}$, 
satisfying Definition \ref{trans}, we can provide the following  transport cost:
$ x \rightarrow y :\alpha- \frac{1-\alpha}{4}, \
c \rightarrow  d  :\frac{1-\alpha}{4}, \
a \rightarrow f : \frac{1-\alpha}{4}, \ 
b \rightarrow e: \frac{1-\alpha}{4}$,
and the amount of transport between the other vertices is zero.
From (\ref{W_1_inf}), we can estimate an upper bound of the Wasserstein distance 
for the above transport between probability measures.
We have the following result.
\begin{align}
W_1 \leq  \alpha + \frac{1-\alpha}{2}.
\end{align}
From Definition \ref{def_alpha_ricci},
$\kappa_{\alpha} \geq  \frac{1-\alpha}{2}$ about the Ollivier Ricci curvature.
By Definition \ref{def_ricci}, we have the following result of a lower bound of the Ricci curvature.
\begin{align}
\kappa \geq  \frac{1}{2}.
\label{ineq.31}
\end{align}

Next, using a $1$-Lipschtiz function, we estimate an upper bound of 
the Ricci curvature by Theorem \ref{thm_w_1_lower}.
We define a $1$-Lipschtiz function as Figure $\ref{fig.8a}$.
The number in each box beside each vertex is the value of the $1$-Lipschiz function.
We have the following result from Theorem \ref{thm_w_1_lower} and this $1$- Lipschiz function.

\begin{align}
W_1 \geq  \alpha + \frac{1}{2}(1-\alpha).
\end{align}
From Definition \ref{def_alpha_ricci}, 
$\kappa_{\alpha} \leq \frac{1}{2}(1-\alpha)$ about the Ollivier Ricci curvature.
Therefore, we have the following upper bound of the Ricci curvature.
\begin{align}
\kappa  \leq \frac{1}{2}.
\label{ineq.32}
\end{align}

By (\ref{ineq.31}) and (\ref{ineq.32}), the curvature of all type B edges is 
$\kappa = \frac{1}{2}$.

This is the same result for all other type B edges.
\hspace{\fill}\qed
\end{proof}

\section{The Ricci curvature of the Cayley graph for cyclic groups}\label{cyc}
\subsection{Cyclic group and Cayley graph}
\ 
Every infinite cyclic group is isomorphic to the additive group of $\mathbb{Z}$, the integers. 
Every finite cyclic group of order $n$ is isomorphic to the additive group of $\mathbb{Z}/n\mathbb{Z}$, the integers modulo $n$. 
Every cyclic group is an abelian group meaning that its group operation is commutative, and every finitely generated abelian group is a direct product of cyclic groups.\\

We consider the Cayley graph for the cyclic group $\mathbb{Z}/n\mathbb{Z}$
with a generating set $S_{1, k} =\{ +1, +k, -1, -k \}$, 
where $k$ is a positive integer not equal to $1$.
We distinguish between the two sets of edges.
One is the edges $A= \{ (g, g+1)~ |~ g \in \mathbb{Z}/n\mathbb{Z} \}$, 
the other is edges $B=\{(g, g+k)~ |~ g \in \mathbb{Z}/n\mathbb{Z} \}$. 
We call the edge in sets $A$ and $B$ type $A$ and type $B$, respectively\\

%%%%%%%%%% plus comment 24/1/11
In the following, Ricci curvatures of the type $A$ and type $B$ edges for these Cayley graphs are discussed.
It shall be noted that results for the Ricci curvature of certain Cayley graphs related to $Z_n$ 
have been obtained in \cite{Dagli}. 
However, the Cayley graphs discussed therein are different from those in this paper.

%%%%%%%%%%.

\subsection{The Ricci curvature of the Cayley graph for the cyclic group 
$\Gamma (\mathbb{Z}/n\mathbb{Z}, S_{1})$ with the generating set $S_{1}=\{ +1, -1 \} $} \label{subsection5_2}
Before considering the cases for $S_{1, k}$, 
we make comments about the Ricci curvatures of Cayley graphs for $\Gamma (\mathbb{Z}/n\mathbb{Z})$ with $S_{1}=\{ +1, -1 \}$.
For the case of $\Gamma (\mathbb{Z}/n\mathbb{Z})$
with $S_{1}$,
if $n$ is equal to $1$, then the Cayley graph consists of an isolated vertex. 
At this time, there are no edges. 
For $n=2, 3$ cases, 
we know a useful result.
\begin{thm}\cite{smith}
The complete graph $K_{n}$ has constant the Ricci curvature 
$\kappa = \frac{n}{n-1}$.
\label{complete}
\end{thm}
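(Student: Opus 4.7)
The plan is to exploit the extreme symmetry of $K_n$: every vertex has degree $n-1$, every pair of distinct vertices is at distance $1$, and the automorphism group acts transitively on edges, so it suffices to compute $\kappa(x,y)$ for a single fixed edge $xy$. The strategy then follows the same sandwich pattern already used throughout the paper: exhibit a transport plan to bound $W_1$ from above, exhibit a $1$-Lipschitz function to bound $W_1$ from below (via Theorem \ref{thm_w_1_lower}), and check that the two bounds match.

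First I would write out the two probability measures explicitly. By Definition \ref{def_prob_measure}, $\mu_x^\alpha$ puts mass $\alpha$ on $x$ and mass $\frac{1-\alpha}{n-1}$ on every other vertex, and similarly for $\mu_y^\alpha$. A key observation is that $\mu_x^\alpha$ and $\mu_y^\alpha$ agree on every vertex in $V\setminus\{x,y\}$, so they differ only at $x$ and $y$, with the excess at $x$ (and deficit at $y$) equal to $\alpha - \frac{1-\alpha}{n-1} = \frac{n\alpha-1}{n-1}$. For $\alpha$ sufficiently close to $1$ (specifically $\alpha > 1/n$, which is all that matters for the limit in Definition \ref{def_ricci}), this quantity is positive.

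For the upper bound, I would use the transport plan that moves exactly this excess from $x$ directly to $y$ along the edge $xy$ and leaves every other vertex fixed; since $d(x,y)=1$, this yields $W_1(\mu_x^\alpha,\mu_y^\alpha) \leq \frac{n\alpha-1}{n-1}$. For the matching lower bound, I would test the supremum in (\ref{1lip}) against the $1$-Lipschitz function $f$ defined by $f(x)=1$ and $f(v)=0$ for all $v\neq x$; this is trivially $1$-Lipschitz because all pairwise distances in $K_n$ equal $1$. Plugging $f$ in, only the term at $v=x$ survives (the measures agree off $\{x,y\}$ and $f(y)=0$), giving $\sum_v f(v)(\mu_x^\alpha(v)-\mu_y^\alpha(v)) = \frac{n\alpha-1}{n-1}$, hence $W_1 \geq \frac{n\alpha-1}{n-1}$.

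Combining the two bounds, $W_1(\mu_x^\alpha,\mu_y^\alpha) = \frac{n\alpha-1}{n-1}$, and then Definition \ref{def_alpha_ricci} gives
\begin{equation*}
\kappa_\alpha(x,y) = 1 - \frac{n\alpha-1}{n-1} = (1-\alpha)\cdot\frac{n}{n-1},
\end{equation*}
so Definition \ref{def_ricci} yields $\kappa(x,y) = \frac{n}{n-1}$. Honestly there is no serious obstacle here; the only thing to watch is the sign of the transported mass (which is why the limit $\alpha\to 1^-$ is used rather than a fixed small $\alpha$), and the routine verification that the proposed $f$ is $1$-Lipschitz on $K_n$. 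Everything else is just the same computational template already exercised in the dihedral and quaternion sections, carried out in the degenerate case where the two measures happen to coincide off a two-point set.
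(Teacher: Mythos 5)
Your argument is correct: the transport plan and the test function $f(x)=1$, $f(v)=0$ otherwise give matching bounds $W_1=\frac{n\alpha-1}{n-1}$, hence $\kappa_\alpha=\frac{n}{n-1}(1-\alpha)$ and $\kappa=\frac{n}{n-1}$. Note that the paper itself offers no proof of this statement—it is quoted from \cite{smith}—but your derivation is exactly the upper-bound-by-transport-plan, lower-bound-by-$1$-Lipschitz-function sandwich the paper uses for all of its own computations, so there is nothing to flag.
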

By Theorem \ref{complete}, we can calculate the following cases.
If $n=2$, the Cayley graph is a complete graph $K_{2}$ with $2$ vertices
having the Ricci curvature $\kappa = 2$.
If $n = 3$, the Cayley graph is a complete graph $K_{3}$ with $3$ vertices 
having the Ricci curvature $\kappa$ is equal to $\frac{3}{2}$.\label{K3}
Also, We know the following results in \cite{Lin}\cite{pey}.
If $n=4$, the Cayley graph is a cycle graph $C_{4}$ with $4$ vertices, 
then the Ricci curvature $\kappa= 1$.
If $n=5$, the Cayley graph is a cycle graph $C_{5}$ with $5$ vertices 
having the Ricci curvature $\kappa = \frac{1}{2}$.
If $n \geq 6$, the Cayley graph is a cycle $C_{n}$ with length $n$
having the Ricci curvature $\kappa =0$.

\subsection{The Ricci curvature of the Cayley graph for the cyclic group $\Gamma (\mathbb{Z}/n\mathbb{Z}, S_{1, 2})$ with the generating set $S_{1, 2}=\{ +1,+2 -1, -2 \} $} 

For $n=3$, $S_{1, 2}=S_{1}$, and then the Cayley graph is given by $K_{3}$ discussed in section 5.2.
Let us consider $\Gamma (\mathbb{Z}/n\mathbb{Z}, S_{1, 2})$
for the cases $n=4, 5$, at first.
For these cases, Theorem \ref{complete}
gives the Ricci curvatures.
If $n = 4$, the Cayley graph is a complete graph $K_{4}$ with $4$ vertices 
having the Ricci curvature $\kappa = \frac{4}{3}$.
If $n = 5$, the Cayley graph is a complete graph $K_{5}$ with $5$ vertices
having the Ricci curvature $\kappa = \frac{5}{4}$.
For $n \geq 6$, we have to distinguish 
between type A and type B edges 
when we calculate the Ricci curvature of their graphs.

%図を書いて説明。

\begin{prop}
The Ricci curvatures of the Cayley graphs for the cyclic group 
$\Gamma (\mathbb{Z}/n\mathbb{Z}, S_{1, 2})$ for $ 6 \leq n \leq 10$
 with the generating set  $S_{1, 2}=\{ +1,+2, -1, -2 \} $
are given in Table\ref{table1}.
\begin{table}[H]
\centering
  \caption{The Ricci curvature of the Cayley graph generated by $S_{1, 2}$.}
  \begin{tabular}{|l||c|r|c|c|c|c|c|c|c|c|c|c|c|c|c|c|c|c|c|}  \hline
   \ \ $n$ & 6&7&8&9&10 \\ \hline
    Type A  &1&1 &$\frac{2}{3}$& $\frac{3}{4}$& $\frac{1}{2}$  \\ \hline
    Type B & 1 & $\frac{3}{4}$ &  $\frac{1}{2}$ & $\frac{1}{4}$& $\frac{1}{4}$    \\ \hline
    \end{tabular}
\label{table1}
\end{table}
\end{prop}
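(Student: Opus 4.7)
The plan is to prove each of the ten curvature values in Table \ref{table1} by the same sandwich argument used earlier in the paper for the dihedral and generalized quaternion cases: for a representative type A edge $(0,1)$ and a representative type B edge $(0,2)$ in $\Gamma(\mathbb{Z}/n\mathbb{Z}, S_{1,2})$, I would first exhibit a transport plan from $\mu_0^\alpha$ to $\mu_1^\alpha$ (respectively $\mu_2^\alpha$) to upper bound $W_1$, giving a lower bound on $\kappa_\alpha$ via Definition \ref{def_alpha_ricci} and then on $\kappa$ via Definition \ref{def_ricci}; and second, construct a $1$-Lipschitz function whose weighted difference matches the transport cost, giving the reverse inequality via Theorem \ref{thm_w_1_lower}. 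Vertex-transitivity of the Cayley graph and symmetry of $S_{1,2}$ ensure that these two representatives suffice.

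Concretely, for a type A edge the measures $\mu_0^\alpha$ and $\mu_1^\alpha$ are supported on $\{0\}\cup\{\pm 1,\pm 2\}$ and $\{1\}\cup\{-1,0,2,3\}$ respectively (all reductions mod $n$), each off-center vertex carrying mass $\tfrac{1-\alpha}{4}$ and the center $\alpha$. After cancelling common mass, the only nontrivial transports are: the excess $\alpha-\tfrac{1-\alpha}{4}$ from $0$ to $1$, together with a transport of $\tfrac{1-\alpha}{4}$ from $-2$ to a vertex of $\{-1,0,2,3\}$ not already saturated, and from $2$ (or wherever the surplus lies) to $3$. The graph distances that enter these costs are determined by how the jumps $\pm 1,\pm 2$ interact with the wrap-around at $n$; for $n=6,7$ the diameter is only $2$ and the neighborhoods overlap heavily, which explains the large values of $\kappa$, while for $n=10$ the diameter jumps to $3$ and some transports require two steps, pushing $\kappa$ down. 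An analogous decomposition handles the type B edge, where $\mu_2^\alpha$ is supported on $\{2\}\cup\{0,1,3,4\}$.

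The matching $1$-Lipschitz function in each case will be built by assigning values that increase along a geodesic from a vertex far from $y$ toward a vertex far from $x$, clamped so that neighbors in the Cayley graph (which are exactly the vertices differing by $\pm 1$ or $\pm 2$ mod $n$) have $|f$-difference$|\le 1$. For curvature values like $\kappa=\tfrac{1}{4}$, the function will use quarter-integer increments concentrated on the direction in which the transport is most expensive. After substituting into \eqref{1lip}, the sum $\sum_x f(x)\bigl(\mu_0^\alpha(x)-\mu_y^\alpha(x)\bigr)$ must equal the transport cost computed in the previous step; this is what forces the construction to be tight.

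The main obstacle is the case-by-case bookkeeping rather than any single conceptual step. Five values of $n$ times two edge types yields ten separate computations, and the overlap pattern of $\{-2,-1,1,2\}$ with $\{-1,0,2,3\}$ (type A) or $\{0,1,3,4\}$ (type B) changes qualitatively as $n$ grows through $6,7,8,9,10$: at $n=6,7$ there are extra identifications $-2\equiv n-2$ that collapse neighborhoods, at $n=8,9$ some but not all such collapses persist, and at $n=10$ the graph behaves nearly like the stable $n\ge 11$ case treated in Table \ref{result4}. I expect $n=9$ and $n=10$ type B to be the most delicate, since the relevant $1$-Lipschitz function must use fractional values to certify the small curvatures $\tfrac{1}{4}$ and must simultaneously respect the Lipschitz condition across all four jump sizes; verifying optimality of the proposed transport plan there will require checking that no alternative routing through a shorter wrap-around path beats it.
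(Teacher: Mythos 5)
Your strategy coincides with the one the paper itself intends here: the paper omits this proof entirely, remarking only that it ``is obtained easily in similar ways as them in the previous sections,'' and those previous sections use exactly your sandwich argument (a transport plan to bound $W_1$ from above, a matching $1$-Lipschitz function to bound it from below via Theorem \ref{thm_w_1_lower}, then Definitions \ref{def_alpha_ricci} and \ref{def_ricci}). So there is no methodological divergence. The difficulty is that your proposal stops exactly where the content of this proposition begins: none of the ten transport plans or Lipschitz functions is actually exhibited, and for a statement that consists of ten numerical values the ``case-by-case bookkeeping'' is not an obstacle to the proof --- it \emph{is} the proof. A small correction to your setup: for the type A edge $(0,1)$ the supports $\{-2,-1,1,2\}$ and $\{-1,0,2,3\}$ overlap in $\{-1,2\}$, so after cancellation the only residual transports are $0\to 1$ with mass $\alpha-\frac{1-\alpha}{4}$ and $n-2\to 3$ with mass $\frac{1-\alpha}{4}$; the whole type A case reduces to the single distance $d(n-2,3)$, not to the two separate moves you describe.

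More seriously, executing your plan honestly contradicts one entry of the table. Take $n=8$, type A: $d(6,3)=2$ in $\Gamma(\mathbb{Z}/8\mathbb{Z},S_{1,2})$ (e.g.\ $6\to 4\to 3$; they are not adjacent since $6-3=3\notin\{\pm1,\pm2\}$), so the plan above gives $W_1\le\alpha+\frac{1-\alpha}{4}$ and hence $\kappa\ge\frac{3}{4}$. The $1$-Lipschitz function with $f(6)=2$, $f(0)=f(4)=f(5)=f(7)=1$, $f(1)=f(2)=f(3)=0$ yields $\sum_{x}f(x)\bigl(\mu_{0}^{\alpha}(x)-\mu_{1}^{\alpha}(x)\bigr)=\alpha+\frac{1-\alpha}{4}$, so $W_1=\alpha+\frac{1-\alpha}{4}$ exactly and $\kappa=\frac{3}{4}$, not the tabulated $\frac{2}{3}$. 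Indeed $\frac{2}{3}$ is unattainable for a type A edge of this $4$-regular graph: near $\alpha=1$ the optimal $W_1$ has the form $\alpha+B\,\frac{1-\alpha}{4}$ with $B$ a nonnegative integer, forcing $\kappa$ to be a multiple of $\frac{1}{4}$. The remaining nine entries do appear to be certifiable by your procedure (for type A the governing quantity $d(n-2,3)$ equals $1,1,2,2,3$ for $n=6,\dots,10$, giving $1,1,\frac34,\frac34,\frac12$), but as written your proof cannot establish the proposition in the form stated, and the $n=8$ type A entry needs to be corrected to $\frac{3}{4}$ --- or else an argument must be supplied for why the value $\frac{2}{3}$ could arise, which the proposal does not contain.
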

The proof is omitted because it is obtained easily in similar ways
as them in the previous sections.
\bigskip

\begin{prop}\label{s12}
Let $\Gamma (\mathbb{Z}/n\mathbb{Z}, S_{1, 2})$ be a Cayley graph 
of $\mathbb{Z}/n\mathbb{Z}$ for $n \geq 11$ with the generating set $S_{1, 2}$.
The Ricci curvature of any type A edge 
in $\Gamma (\mathbb{Z}/n\mathbb{Z}, S_{1, 2})$ is $\kappa = \frac{1}{2}$. 
The Ricci curvature of any  type B  edge 
in $\Gamma (\mathbb{Z}/n\mathbb{Z}, S_{1, 2})$ is $\kappa =0$. 
\end{prop}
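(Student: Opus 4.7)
The plan is to sandwich $W_1(\mu_x^\alpha,\mu_y^\alpha)$ between matching upper and lower bounds for each of the two edge types, exactly as in the earlier sections of the paper. The upper bound will come from writing down an explicit transport plan and summing its costs, and the lower bound will come from exhibiting a $1$-Lipschitz function that realizes the same value in the dual formula of Theorem~\ref{thm_w_1_lower}. Writing $\beta = (1-\alpha)/4$, it suffices by vertex-transitivity of the Cayley graph to analyse one representative edge of each type. The hypothesis $n\ge 11$ will be used to guarantee that the seven vertices $x-2,x-1,x,x+1,x+2,x+3,x+4$ are all distinct, and moreover that the cycle does not provide shortcuts that shorten the distances $d(x-2,x+3)=3$ (for type A) and $d(x-2,x+4)=3$ (for type B).

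For a type A edge $(x,x+1)$, the supports of $\mu_x^\alpha$ and $\mu_{x+1}^\alpha$ coincide (with equal mass $\beta$) on $\{x-1,x+2\}$ and overlap with imbalanced masses on $\{x,x+1\}$. The proposed transport keeps the matched mass in place, moves the imbalance $\alpha-\beta$ from $x$ to $x+1$ along the edge itself, and ships the orphan mass $\beta$ from $x-2$ to $x+3$ along a length-$3$ geodesic, for a total cost $(\alpha-\beta)+3\beta=\alpha+(1-\alpha)/2$. For the lower bound I would use the $1$-Lipschitz function with values $(1,0,0,-1,-1,-2)$ on $(x-2,x-1,x,x+1,x+2,x+3)$, extended to the remaining vertices by the McShane rule $f(v)=\max_u\bigl(f(u)-d(u,v)\bigr)$; the dual sum $\sum_v f(v)(\mu_x^\alpha(v)-\mu_{x+1}^\alpha(v))$ collapses to exactly $\alpha+(1-\alpha)/2$. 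Together these give $W_1=\alpha+(1-\alpha)/2$, hence $\kappa_\alpha=(1-\alpha)/2$ and $\kappa=\tfrac12$.

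For a type B edge $(x,x+2)$, the symmetric transport moves $\alpha-\beta$ from $x$ to $x+2$ (distance $1$), $\beta$ from $x-1$ to $x+3$ (distance $2$ via two $+2$-steps), and $\beta$ from $x-2$ to $x+4$ (distance $3$ via three $+2$-steps), while keeping the matched mass at $x+1$ fixed; summing yields $(\alpha-\beta)+2\beta+3\beta=\alpha+(1-\alpha)=1$. For the matching lower bound I would use the $1$-Lipschitz function with values $(2,1,1,0,0,-1,-1)$ on $(x-2,x-1,x,x+1,x+2,x+3,x+4)$, again extended by McShane; its dual sum evaluates to $\alpha+4\beta=1$. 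This forces $W_1=1=d(x,x+2)$, so $\kappa_\alpha=0$ and $\kappa=0$. The main obstacle is the verification that the two candidate functions are genuinely $1$-Lipschitz on the whole Cayley graph: one has to check the inequality against every pair inside the seven-vertex window (respecting short-cut distances such as $d(x-2,x)=1$ and $d(x-2,x+2)=2$ coming from the $\pm 2$-generator), and also confirm that the long-jump distances $d(x-2,x+3)$ and $d(x-2,x+4)$ are not shortened by going ``around the back'' of the cycle, which is precisely where the bound $n\ge 11$ enters; outside the window the McShane extension is automatic.
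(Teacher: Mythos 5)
Your proposal is correct and follows essentially the same route as the paper's proof in Appendix \ref{props12proof}: an explicit transport plan giving $W_1\le\alpha+\tfrac{1}{2}(1-\alpha)$ (type A) and $W_1\le 1$ (type B), matched by a $1$-Lipschitz function in the dual formula, with $n\ge 11$ used exactly where you use it, to rule out shortcuts around the back of the cycle so that $d(V_{i-3},V_{i+3})=3$. The only cosmetic difference is in the type-A plan, where the paper chains three unit moves $V_{i-2}\to V_i\to V_{i+1}\to V_{i+3}$ instead of your single length-$3$ shipment, at identical total cost.
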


The proof of this proposition is given in Appendix \ref{props12proof}.

\subsection{The Ricci curvature of the Cayley graph for the cyclic group 
$\Gamma (\mathbb{Z}/n\mathbb{Z}, S_{1, 3})$
with the generating set  $S_{1, 3}=\{ +1,+3, -1, -3 \} $} 
By Theorem \ref{complete}, if we can calculate the following case $n=5$, 
the Cayley graph is a complete graph $K_{5}$ with $5$ vertices having 
the Ricci curvature $\kappa=\frac{5}{4}$.
%図を解説する

\begin{prop}
The Ricci curvatures of the Cayley graphs for the cyclic group 
$\Gamma (\mathbb{Z}/n\mathbb{Z}, S_{1, 3})$
 with the generating set  $S_{1, 3}=\{ +1,+3, -1, -3 \} $ for $6 \leq n \leq 15$
are given in Table\ref{table2}.
\begin{table}[H]
\centering
  \caption{The Ricci curvature of the Cayley  graph  generated by $S_{1, 3}$}
  \begin{tabular}{|c||c|c|c|c|c|c|c|c|c|c|c|c|c|c|c|c|c|c|c|c|c|c|c|c|c|c|c|c|c|c|c|c|}  \hline
   \ \ $n$ &6&7&8&9&10&11&12&13&14&15 \\ \hline 
    Type A & $\frac{2}{3}$ & $\frac{3}{4}$ & $\frac{1}{2}$ & $\frac{1}{2}$ &
   $\frac{1}{2}$ &$\frac{1}{2}$&$\frac{1}{2}$&$\frac{1}{2}$&$\frac{1}{2}$&$\frac{1}{2}$  \\ \hline
    Type B &  $\frac{2}{3}$ & 1 &$\frac{1}{2}$ &$\frac{3}{4}$&
						$\frac{1}{2}$ &$\frac{3}{4}$ &$\frac{1}{2}$&$\frac{1}{4}$&
   					0&$\frac{1}{4}$  \\ \hline
    \end{tabular}
\label{table2}
\end{table}
\end{prop}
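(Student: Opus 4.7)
The plan is to verify each entry of Table \ref{table2} by the sandwich-bound technique used throughout Sections \ref{dihe} and \ref{quate}. By vertex-transitivity of the Cayley graph, it suffices to analyze one representative edge of each type per value of $n$: the Type A edge $\{0,1\}$ and the Type B edge $\{0,3\}$ in $\Gamma(\mathbb{Z}/n\mathbb{Z}, S_{1,3})$, for each $n\in\{6,\ldots,15\}$.

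For each such edge $\{x,y\}$ I would first draw the induced subgraph on $N(x)\cup N(y)$ together with a few auxiliary vertices at distance at most $2$, tracking all distances modulo $n$. Then (i) construct an explicit transport plan from $\mu_x^\alpha$ to $\mu_y^\alpha$ that sends most of the mass at $x$ along the edge to $y$ and pairs each remaining neighbor of $x$ with the nearest unpaired neighbor of $y$; summing the resulting costs yields an affine upper bound $W_1(\mu_x^\alpha,\mu_y^\alpha)\le C_1(\alpha)$, hence a lower bound on $\kappa$ via Definitions \ref{def_alpha_ricci} and \ref{def_ricci}. Then (ii) exhibit a matching $1$-Lipschitz function $f$ on the vertex set via Theorem \ref{thm_w_1_lower}, typically built by assigning value $0$ to vertices on the $x$-side and suitable positive integers to the $y$-side so that the Lipschitz inequality is tight on every edge crossing the cut. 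Letting $\alpha\to 1$ in the resulting two-sided bound for $\kappa_\alpha$ gives the tabulated Ricci curvature.

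The main obstacle is the case analysis, since distances in $\mathbb{Z}/n\mathbb{Z}$ with generators $\pm 1,\pm 3$ depend sensitively on wrap-around and each $n$ produces a different local picture. When $n=6$ the two elements $+3$ and $-3$ coincide, so $|S_{1,3}|=3$ and the Cayley graph reduces to $K_{3,3}$, giving $\kappa=\tfrac{2}{3}$ for every edge. When $3\mid n$ (here $n\in\{9,12,15\}$) the generator $+3$ has order $n/3$, producing short cycles of Type B edges that shift the Type B curvature relative to the generic case. For small $n$ such as $n=7,8$ the neighborhoods $N(0)$ and $N(3)$ overlap heavily, so Type B edges sit inside a dense subgraph that raises the curvature. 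In the remaining cases $n\in\{10,11,13,14\}$ the neighborhoods of $0$ and $3$ are essentially disjoint, so the value of $\kappa$ depends on which neighbor of $0$ gets paired with which neighbor of $3$; the optimal pairing varies with the residue of $n$ modulo small numbers, and the $1$-Lipschitz function witnessing optimality must be redesigned each time.

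Once the local picture is drawn, the actual computation of each bound is routine and follows the template used in the proofs of Propositions \ref{D4}--\ref{Q6} and of Proposition \ref{s12} (the $S_{1,2}$ analogue). The verification therefore reduces to twenty short bookkeeping exercises, one Type A and one Type B for each $n\in\{6,\ldots,15\}$, and can, following the convention adopted for Proposition \ref{s12}, reasonably be omitted once the reader sees the method.
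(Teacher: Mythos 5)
Your proposal matches the paper's treatment: the paper itself gives no written proof for this proposition beyond the remark that it follows from direct calculations, and those calculations are exactly the transport-plan upper bound plus matching $1$-Lipschitz lower bound sandwich you describe, carried out edge by edge for each $n$ (your observations about the degenerate case $n=6$ giving $K_{3,3}$ with $\kappa=\tfrac{2}{3}$ and about the heavy neighborhood overlap for $n=7,8$ are consistent with the tabulated values). Nothing further to compare, since your outline is essentially the method the paper applies in Propositions \ref{D4}--\ref{Q6} and \ref{s12} and merely leaves the same bookkeeping to the reader that the paper does.
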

The proof is obtained from direct calculations.
\bigskip

\begin{prop}\label{s13}
Let $\Gamma (\mathbb{Z}/n\mathbb{Z}, S_{1, 3})  (n \geq 16)$ 
be a Cayley graph with the generating set $S_{1, 3}$.
The Ricci curvature of is any type A edge 
in $\Gamma (\mathbb{Z}/n\mathbb{Z}, S_{1, 3})$ is $\kappa = \frac{1}{2}$. 
The Ricci curvature of any type B edge 
in $\Gamma (\mathbb{Z}/n\mathbb{Z}, S_{1, 3})$ is $\kappa =0$.
\end{prop}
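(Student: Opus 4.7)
I would fix a representative type A edge, $(0, 1)$, and a representative type B edge, $(0, 3)$; since $\Gamma(\mathbb{Z}/n\mathbb{Z}, S_{1,3})$ is vertex-transitive the Ricci curvature is the same on every edge of a given type. For each representative edge I will bracket $W_1(\mu_x^{\alpha}, \mu_y^{\alpha})$ by a transport plan (upper bound) and a 1-Lipschitz test function (lower bound), then invoke Definitions \ref{def_alpha_ricci} and \ref{def_ricci} together with Theorem \ref{thm_w_1_lower}, exactly as in the previous propositions. The only role of the hypothesis $n \geq 16$ is to guarantee that every graph distance $d(u, v)$ I rely on in the local region around these edges matches its value in $\mathbb{Z}$, i.e.\ no cyclic shortcut occurs. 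The critical distance is $d(-3, 6) = 3$, which holds for $n \geq 16$ but fails at $n = 15$ (where $-3 - 3 = -6 \equiv 9 \pmod{15}$ gives a two-step shortcut).

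For the type A edge $(0, 1)$ the balls are $B(0) = \{-3, -1, 0, 1, 3\}$ and $B(1) = \{-2, 0, 1, 2, 4\}$, overlapping on $\{0, 1\}$. The transport plan sends $(5\alpha - 1)/4$ from $0$ to $1$ and a unit $(1-\alpha)/4$ along each of the single-generator edges $-3 \to -2$, $-1 \to 2$ (using the generator $+3$), and $3 \to 4$; total cost $(1 + \alpha)/2$, giving $W_1 \leq (1 + \alpha)/2$ and hence $\kappa(0, 1) \geq 1/2$. For the matching upper bound on $\kappa$ I will take the 1-Lipschitz indicator $f = \mathbf{1}_{\{-3, -1, 0, 3\}}$ (any $\{0, 1\}$-valued function is automatically 1-Lipschitz on a simple graph); summing gives $\sum_v f(v)(\mu_0^{\alpha}(v) - \mu_1^{\alpha}(v)) = (1 + \alpha)/2$, so $\kappa(0, 1) \leq 1/2$.

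For the type B edge $(0, 3)$ the balls $B(0)$ and $B(3) = \{0, 2, 3, 4, 6\}$ overlap on $\{0, 3\}$. The transport plan ships $(5\alpha - 1)/4$ from $0$ to $3$ (cost $1$), $(1-\alpha)/4$ from $-1$ to $2$ and from $1$ to $4$ using the $+3$ generator (cost $1$ each), and $(1-\alpha)/4$ from $-3$ to $6$ along any length-$3$ path; the $\alpha$ terms cancel cleanly to the total $1$, so $W_1 \leq 1 = d(0, 3)$ and hence $\kappa_{\alpha}(0, 3) \geq 0$ for every $\alpha$. For the matching inequality I will take $f(v) = 2 - d(v, -3)$ on all of $\mathbb{Z}/n\mathbb{Z}$, which is 1-Lipschitz because $d(\cdot, -3)$ is. Its values at the support vertices are $f(-3) = 2$, $f(-1) = f(0) = f(1) = 1$, $f(3) = 0$, and $f(2) = f(4) = f(6) = -1$; summing against $\mu_0^{\alpha} - \mu_3^{\alpha}$ yields exactly $1$. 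Hence $W_1 = 1$, $\kappa_{\alpha}(0, 3) = 0$ identically in $\alpha$, and $\kappa(0, 3) = 0$.

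The one genuinely subtle step is finding the Lipschitz function for type B: the three-edge transport $-3 \to 6$ forces the dual potential to swing across a range of width $3$, so the $\{0, 1\}$-valued witnesses used elsewhere in the paper would deliver only $W_1 \geq (1 + \alpha)/2$, which is strictly smaller than $1$ and would falsely yield $\kappa \geq 1/2$. The remedy is the three-valued potential $2 - d(\cdot, -3)$, whose 1-Lipschitzness relies on $d(-3, 6) = 3$ being realised in the Cayley graph. This is precisely where the bound $n \geq 16$ is used, and it matches the transition from $\kappa(0, 3) = 1/4$ at $n = 15$ in Table \ref{result5} to $\kappa(0, 3) = 0$ for all $n \geq 16$.
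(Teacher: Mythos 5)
Your proposal follows essentially the same route as the paper's Appendix proof: an explicit transport plan gives $W_1\leq \frac{1+\alpha}{2}$ (type A) and $W_1\leq 1$ (type B), a $1$-Lipschitz dual witness gives the matching lower bounds via Theorem \ref{thm_w_1_lower}, and the hypothesis $n\geq 16$ enters exactly where the paper uses it, to guarantee the distance $9$-apart pair is at graph distance $3$ (your $d(-3,6)=3$ is the paper's $d(V_{i-4},V_{i+5})=3$). The reduction to one representative edge per type by vertex-transitivity and the choice of long-haul leg in the type B plan coincide with the paper's up to relabeling.

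One arithmetic slip in the type B dual witness: with $f(v)=2-d(v,-3)$ you list $f(-1)=f(1)=1$, but $d(-1,-3)=d(1,-3)=2$ (the displacement $-2$, resp.\ $-4$, needs two generator steps), so $f(-1)=f(1)=0$. With your stated values the pairing $\sum_v f(v)\bigl(\mu_0^{\alpha}(v)-\mu_3^{\alpha}(v)\bigr)$ comes out to $1+\frac{1-\alpha}{2}$, which would contradict your own upper bound $W_1\leq 1$; with the corrected values it is exactly $\alpha+\frac{1-\alpha}{2}-\bigl(-\frac{1-\alpha}{2}\bigr)=1$, so the function you chose does the job and the conclusion stands. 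The error is harmless but detectable, and is worth fixing in the write-up.
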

The proof of this proposition is given in Appendix \ref{props13proof}.

\subsection{The Ricci curvature of the Cayley graph for the cyclic group 
$\Gamma (\mathbb{Z}/n\mathbb{Z}, S_{1, 4})$
 with the generating set $S_{1, 4}=\{ +1, +4, -1, -4 \} $} 
For the case $n=5$ and $S_{1, 4}$, 
the Cayley graph is a cycle $C_{5}$ with $5$ vertices having $\kappa = \frac{1}{2}$
 as mentioned in Subsetion \ref{subsection5_2}. 

\begin{prop}
The Ricci curvatures of the Cayley graph for the cyclic group 
$\Gamma (\mathbb{Z}/n\mathbb{Z}, S_{1, 4})$
 of generating set  $S_{1, 4}=\{ +1, +4, -1, -4 \} $ with $6 \leq n \leq 22$
are given in Table \ref{table3}.
\begin{table}[H]
\centering
  \caption{The Ricci curvature of the Cayley  graph that can be generated by $S_{1, 4}$}
  \begin{tabular}{|c|c|c|c|c|c|c|c|c|c|c|c|c|c|c|c|c|c|c|c|c|c|c|c|c|c|c|c|c|c|c|c|c|}  \hline
   \ \  $n$ & 6&7&8&9&10&11&12&13&14&15&16&17&18&19&20&21&22 \\  \hline 
    Type A & $\frac{2}{3}$ & $\frac{3}{4}$& $\frac{1}{4}$ &$\frac{1}{4}$& 
							$\frac{1}{4}$ & $\frac{1}{4}$ &$\frac{1}{4}$ &$\frac{1}{4}$
	 					&$\frac{1}{4}$&$\frac{1}{4}$&$\frac{1}{4}$ &$\frac{1}{4}$&$\frac{1}{4}$
						&$\frac{1}{4}$&$\frac{1}{4}$&$\frac{1}{4}$&$\frac{1}{4}$ \\ \hline
    Type B & $\frac{2}{3}$&  1 & $\frac{1}{4}$&$\frac{3}{4}$ &$\frac{1}{4}$ &
   $\frac{1}{2}$&$\frac{3}{4}$&$\frac{1}{2}$ &$\frac{1}{4}$&$\frac{1}{4}$&$\frac{1}{2}$&$\frac{1}{4}$
    & $\frac{1}{4}$&0&$\frac{1}{4}$&0& $\frac{1}{4}$\\ \hline
    \end{tabular}
\label{table3}
\end{table}
\end{prop}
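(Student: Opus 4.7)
The plan is to prove the table by treating each value $n\in\{6,7,\dots,22\}$ as a separate case, and within each case computing the Ricci curvature of one representative type A edge and one representative type B edge. Since $\Gamma(\mathbb{Z}/n\mathbb{Z},S_{1,4})$ is vertex-transitive (left multiplication by any group element is a graph automorphism) and edge-type-preserving, all type A edges have the same curvature, and likewise for type B, so fixing the edges $(0,1)$ and $(0,4)$ respectively suffices. In every case I would follow the same two-step recipe already used in the dihedral and quaternion sections: exhibit a concrete transport plan between $\mu_{x}^{\alpha}$ and $\mu_{y}^{\alpha}$ to produce an upper bound $W_{1}\le U(\alpha)$, then exhibit a $1$-Lipschitz function $f$ on the relevant neighborhood to produce a matching lower bound $W_{1}\ge U(\alpha)$ via Theorem \ref{thm_w_1_lower}. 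Taking $\alpha\to 1$ in Definition \ref{def_ricci} then pins $\kappa$ to the tabulated value.

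For each representative edge $xy$ (with $y=x+1$ or $y=x+4$), the support of $\mu_{x}^{\alpha}$ is $\{x,x\pm 1,x\pm 4\}$ and of $\mu_{y}^{\alpha}$ is $\{y,y\pm 1,y\pm 4\}$, so only a small, structurally explicit bipartite transport problem needs to be solved. The transport plan is built greedily: first cancel any mass at shared vertices of the two supports, then route leftover mass of $\mu_{x}^{\alpha}$ along the shortest available edge (of length $1$, $2$, $3$, or longer depending on $n$) to leftover mass of $\mu_{y}^{\alpha}$. The matching $1$-Lipschitz function is constructed on the union of the two balls of radius $1$ by assigning values that differ by exactly the graph distance between the extremal supports in the direction of transport (this is what the boxed labels in the figures of the earlier sections encode). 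Provided the chosen plan and $f$ give the same linear expression in $\alpha$, the Wasserstein distance is determined exactly, and the Ricci curvature drops out.

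The main obstacle is that the local structure around an edge changes with $n$ because neighbors of $0$ and of $1$ (resp.\ $4$) can coincide, degrees can drop (e.g.\ when $n=8$, $+4\equiv -4$, so the graph is $3$-regular and $S_{1,4}$ effectively reduces), and short alternative paths of length $2$ or $3$ through the $\pm 4$ generator can exist depending on the residue of $n$ modulo small integers. In particular the values $n\in\{6,7,8,9,10\}$ have degenerate features (the $\pm 4$ generator can wrap around or collapse onto a $\pm 1$ or $\pm 2$ step), while for $n\in\{11,\dots,22\}$ one must carefully distinguish subcases according to whether $4,8,2\cdot 4\pm 1$, etc., reduce modulo $n$ into the ball of radius $1$ around $y$. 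I would organize the argument by first tabulating for each $n$ the precise distances $d(u,v)$ for every pair $(u,v)$ with $u\in\mathrm{supp}(\mu_{x}^{\alpha})$ and $v\in\mathrm{supp}(\mu_{y}^{\alpha})$, and then read off the optimal plan and Lipschitz certificate mechanically. Because the procedure is routine once the distance table is written down, the final proof can be condensed to presenting the plan and the Lipschitz function for each $n$ and each edge type, exactly as in the preceding propositions, which is why the authors may well omit the bulk of the computation and refer back to the methods of Sections \ref{dihe} and \ref{quate}.
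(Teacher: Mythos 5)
Your proposal follows exactly the method the paper uses throughout (and which it invokes here with the remark that ``the proof is obtained from direct calculations''): fix representative edges $(0,1)$ and $(0,4)$ by vertex-transitivity, exhibit a transport plan for an upper bound on $W_1$ and a matching $1$-Lipschitz function for the lower bound via Theorem \ref{thm_w_1_lower}, and pass to the limit in Definition \ref{def_ricci}, case by case in $n$. Your explicit attention to the degenerate small-$n$ cases (e.g.\ $+4\equiv-4$ when $n=8$, and wrap-around coincidences of neighbors) is precisely the care the omitted computation requires, so the approach is correct and essentially identical to the paper's.
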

The proof is obtained from direct calculations.
\begin{prop}\label{s14}
Let $\Gamma (\mathbb{Z}/n\mathbb{Z}, S_{1, 4}) (n \geq 23)$ be 
a Cayley graph with the generating set $S_{1, 4}$.
The Ricci curvature of any type A edge 
in $\Gamma (\mathbb{Z}/n\mathbb{Z}, S_{1, 4})$ is $\kappa = \frac{1}{4}$. 
The Ricci curvature of any type B edge 
in $\Gamma (\mathbb{Z}/n\mathbb{Z}, S_{1, 4})$ is $\kappa =0$. 
\end{prop}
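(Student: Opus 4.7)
The proof will follow the two-sided bound template used throughout Sections~\ref{dihe}--\ref{cyc}: an explicit transport plan bounds $W_1$ from above (hence $\kappa$ from below), while an explicit $1$-Lipschitz function combined with Theorem~\ref{thm_w_1_lower} bounds $W_1$ from below (hence $\kappa$ from above). The hypothesis $n\geq 23$ is needed so that all vertices at graph distance at most $3$ from either endpoint have pairwise distinct representatives in $\mathbb{Z}/n\mathbb{Z}$; local graph distances then coincide with the metric on $\mathbb{Z}$ generated by $\{\pm 1,\pm 4\}$, for which one checks $d(0,\pm j)=1$ for $j\in\{1,4\}$, $d(0,\pm j)=2$ for $j\in\{2,3,5,8\}$, and $d(0,\pm j)=3$ for $j\in\{6,7,9,12\}$.

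For a type A edge $(x,x+1)$, I would use the transport plan that sends $\alpha-\tfrac{1-\alpha}{4}$ from $x$ to $x+1$ at cost $1$, and then dispatches the three residual masses $\tfrac{1-\alpha}{4}$ at $x-4,x-1,x+4$ to the three deficits at $x-3,x+2,x+5$ via $x-4\to x-3$ and $x+4\to x+5$ (both at cost $1$) together with $x-1\to x+2$ (at cost $2$), leaving a self-matching $\tfrac{1-\alpha}{4}$ at each of $x$ and $x+1$. The total cost equals $\alpha+\tfrac{3}{4}(1-\alpha)$, so $\kappa\geq \tfrac14$. For the matching upper bound I would exhibit the $1$-Lipschitz labeling $f(x-4)=2$, $f(x-3)=f(x-1)=f(x)=f(x+4)=1$, $f(x+1)=f(x+5)=0$, $f(x+2)=-1$, which is dual-optimal because the four flow edges above are simultaneously tight. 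After checking all pairwise inequalities $|f(u)-f(v)|\le d(u,v)$ on this eight-vertex support (and extending $1$-Lipschitzly to the whole graph by McShane), the quantity $\sum_v f(v)\bigl(\mu^\alpha_x(v)-\mu^\alpha_{x+1}(v)\bigr)$ evaluates to $\alpha+\tfrac{3}{4}(1-\alpha)$, forcing $\kappa\leq\tfrac14$.

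For a type B edge $(x,x+4)$, the transport plan pushes $\alpha-\tfrac{1-\alpha}{4}$ from $x$ to $x+4$ along the edge and matches the three residual $\tfrac{1-\alpha}{4}$-masses at $x-4,x-1,x+1$ to the three deficits at $x+3,x+5,x+8$ via the minimum-cost matching $x-1\to x+3$ and $x+1\to x+5$ (both at cost $1$) together with $x-4\to x+8$ (at cost $3$); the total cost is exactly $1$, giving $\kappa\geq 0$. The matching upper bound is delivered by the scaled linear function $f(v):=v/4$, regarded on a chart of diameter less than $n$: this is $1$-Lipschitz on that chart because each generator step $\pm 1$ or $\pm 4$ changes $f$ by $1/4$ or $1$ respectively, each at graph cost $1$, and hence for arbitrary shifts one has $|f(u)-f(v)|=|u-v|/4\le d(u,v)$. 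A direct computation gives $\sum_v f(v)\bigl(\mu^\alpha_x(v)-\mu^\alpha_{x+4}(v)\bigr)=-1$, so after sign-flipping one obtains $W_1\geq 1$ and $\kappa\leq 0$. The principal obstacle is the type A upper bound: the $1$-Lipschitz function must be simultaneously tight on all four flow edges, and verifying the pairwise Lipschitz constraints on the eight-vertex support is the main bookkeeping step; by contrast, the type B upper bound is almost immediate once the linear ansatz $f(v)=v/4$ is identified.
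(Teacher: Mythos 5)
Your proposal is correct and follows essentially the same route as the paper: the transport plans you give for type A (mass $\alpha-\tfrac{1-\alpha}{4}$ along the edge, $x-4\to x-3$, $x+4\to x+5$ at cost $1$, $x-1\to x+2$ at cost $2$) and for type B ($x-1\to x+3$, $x+1\to x+5$ at cost $1$, $x-4\to x+8$ at cost $3$) are exactly the ones in Appendix \ref{props14proof}, and the matching upper bounds via Theorem \ref{thm_w_1_lower} are the paper's strategy as well, with your explicit eight-vertex labeling playing the role of the paper's Figure \ref{fig.14a}. One caveat: your justification of the type B test function $f(v)=v/4$ is imprecise as stated, since $f$ is not well defined on $\mathbb{Z}/n\mathbb{Z}$ and the inequality $|f(u)-f(v)|=|u-v|/4\le d(u,v)$ fails for a general ``chart of diameter less than $n$'' (take $u,v$ adjacent across the wraparound); you must restrict $f$ to the diameter-$12$ window supporting $\mu^{\alpha}_{x}-\mu^{\alpha}_{x+4}$, verify $|u-v|/4\le d(u,v)$ there using $n\ge 23$ to rule out shortcuts around the cycle, and only then extend by McShane. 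With that repair the argument is complete and coincides with the paper's.
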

The proof of this proposition is given in Appendix \ref{props14proof}.

\subsection{The Ricci curvature of the Cayley graph for the cyclic group 
$\Gamma (\mathbb{Z}/n\mathbb{Z}, S_{1, 5})$
 with the generating set $S_{1, 5}=\{ +1, +5, -1, -5 \} $} 

\begin{prop}
The Ricci curvature of the edges of the Cayley graph 
for the cyclic group $\Gamma (\mathbb{Z}/n\mathbb{Z}, S_{1, 5}) ~ (7 \le n \le 25)$
 with the generating set  $S_{1, 5}=\{ +1,+5, -1, -5 \}$
are given in Table \ref{table4}.
\begin{table}[h]
\centering
  \caption{The Ricci curvature of the Cayley graph that can be generated by $S_{1, 5}$}
  \begin{tabular}{|c|c|c|c|c|c|c|c|c|c|c|c|c|c|c|c|c|c|c|c|c|c|c|c|c|c|c|c|c|c|c|c|c|}  \hline
   \ \ $n$ &7&8&9&10&11&12&13&14&15&16&17&18&19&20&21&22&23&24&25 \\  \hline 
    Type A & 1& $\frac{1}{2}$ & $\frac{1}{4}$ & $\frac{1}{4}$ & $\frac{1}{2}$ &$\frac{1}{2}$ &$\frac{1}{4}$ &
	 0&0&0&0&0&0&0&0&0&0&0&0 \\ \hline
    Type B & $\frac{3}{4}$&  $\frac{1}{2}$ & $\frac{3}{4}$ &$\frac{3}{4}$ &0&0&$\frac{1}{4}$ &
   $\frac{1}{2}$&$\frac{3}{4}$&$\frac{1}{2}$ &$\frac{1}{4}$&0&$\frac{1}{4}$&$\frac{1}{2}$
    & $\frac{1}{4}$ &0&0&0& $\frac{1}{4}$ \\ \hline
    \end{tabular}
\label{table4}
\end{table}

\end{prop}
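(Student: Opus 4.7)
The plan is to verify the table entry-by-entry using the same two-step bounding strategy employed in Propositions \ref{s12}, \ref{s13}, and \ref{s14}. Since the Cayley graph $\Gamma(\mathbb{Z}/n\mathbb{Z}, S_{1,5})$ is vertex-transitive — left addition by any $g \in \mathbb{Z}/n\mathbb{Z}$ is a graph automorphism — it will suffice to analyze a single representative edge of each type, which I will take to be the edge $(0,1)$ for type A and the edge $(0,5)$ for type B. For each of the nineteen values of $n$ in the range $7 \le n \le 25$ and each of these two edges, I will carry out the following four steps.

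First, I will enumerate the neighborhoods $N(x) = \{x \pm 1, x \pm 5\} \pmod{n}$ and $N(y)$, recording any coincidences arising from the modular identification. Second, I will exhibit an explicit transport plan $\pi$ from $\mu_x^\alpha$ to $\mu_y^\alpha$; its total cost is an upper bound on $W_1(\mu_x^\alpha,\mu_y^\alpha)$ by (\ref{W_1_inf}), and together with Definitions \ref{def_alpha_ricci} and \ref{def_ricci} this yields a lower bound on $\kappa$. Third, I will construct a $1$-Lipschitz function $f$ on $V$ whose value $\sum_v f(v)(\mu_x^\alpha(v) - \mu_y^\alpha(v))$ matches the cost of $\pi$; by Theorem \ref{thm_w_1_lower} this provides a matching lower bound on $W_1$, and hence an upper bound on $\kappa$. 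Fourth, letting $\alpha \to 1$ will deliver the claimed value of $\kappa$ via Definition \ref{def_ricci}.

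The main obstacle will be the pronounced irregularity of the table. The type B values jump among $0, \tfrac14, \tfrac12,$ and $\tfrac34$ in a pattern governed by how the residues $\pm 5, \pm 10, \pm 15, \ldots$ interact modulo $n$, and even the type A values do not stabilize at $0$ until $n = 14$. No single generic construction will work across all cases; each entry will demand its own transport plan and its own $1$-Lipschitz function, tailored to how the set $\{0, \pm 1, \pm 2, \ldots\}$ wraps around modulo $n$ near $x$ and $y$. For the small values of $n$ I must additionally track whether a neighbor of $x$ is also a neighbor of $y$, equal to $y$, or even equal to $x$ under the identification, since this affects both which mass must be moved and the graph distances that enter the cost. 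The case $n = 7$ is exceptional: $\pm 5 \equiv \mp 2 \pmod{7}$, so $\Gamma(\mathbb{Z}/7\mathbb{Z}, S_{1,5}) = \Gamma(\mathbb{Z}/7\mathbb{Z}, S_{1,2})$ and its entry is inherited directly from Table \ref{table1}. Apart from this bookkeeping, each individual verification will be entirely parallel to the illustrative calculations already carried out in Sections \ref{dihe} and \ref{quate}, so the thirty-eight cases together fit into a routine — if lengthy — case analysis.
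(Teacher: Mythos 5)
Your proposal follows exactly the paper's method: the paper's entire ``proof'' of this proposition is the sentence ``The proof is obtained from direct calculations,'' and the transport-plan upper bound plus $1$-Lipschitz-function lower bound scheme you describe is precisely the technique the paper uses in every one of its worked examples (Appendices A.1--A.7). Your outline is in fact more informative than the paper's own treatment, since you correctly flag the genuine subtleties a direct calculation must handle --- the coincidence $S_{1,5}=S_{1,2}$ mod $7$ (and similarly $S_{1,3}$ mod $8$, $S_{1,4}$ mod $9$, which one can check against Tables \ref{result3}, \ref{result5}, \ref{result7}), and the degree drop at $n=10$ where $+5\equiv -5$ --- though, like the paper, you stop short of actually executing the thirty-eight verifications.
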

The proof is obtained from direct calculations.
For $n \geq 26$, Theorem \ref{thm_z_m} gives the value of Ricci curvature in the following subsection. 

\subsection{The Ricci curvature of the Cayley graph for the cyclic group 
$\Gamma (\mathbb{Z}/n\mathbb{Z}, S_{1, k})$
with the generating set $S_{1, k}=\{ +1, +k, -1, -k \} $}

\begin{thm}\label{thm_z_m}
Let $n$ and $k$ be positive integers satisfying $n > k$.
 Consider the Cayley graph for the cyclic group 
$\Gamma(\mathbb{Z}/n\mathbb{Z}, S_{1, k})$.\\
If $ k \geq  5$, $n \neq 3k-2$, and $n \geq 2k+4$, 
then the Ricci curvature of any type A edge 
in $\Gamma(\mathbb{Z}/n\mathbb{Z}, S_{1, k})$ is $\kappa = 0$.
The Ricci curvature of any type B edge 
in $\Gamma(\mathbb{Z}/n\mathbb{Z}, S_{1, k})$ 
is $\kappa = 0 $ 
if $n$ and $k$ satisfy 
any one of the following conditions :
\begin{enumerate}
\item $k \geq 5$ and  
\begin{align}
3k+3 \leq n \leq 4k-2,
\label{b1}
\end{align} 
\item $ k \geq 3$ and 
\begin{align}
4k+2 \leq n \leq 5k-1, 
\label{b2}
\end{align}
\item $ k \geq 3$ and 
\begin{align}
n \geq 5k+1,
\label{b3}
\end{align}
\item $ k \geq 6$ and 
\begin{align}
2k+4 \leq n \leq 3k-3.
\label{b4}
\end{align}
\end{enumerate}
\end{thm}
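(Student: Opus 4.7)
I will prove the theorem by the same duality-based strategy as in the earlier propositions. For each edge $xy$ on which $\kappa(x,y)=0$ is claimed, the goal is to sandwich the Wasserstein distance $W_{1}(\mu^{\alpha}_{x},\mu^{\alpha}_{y})$ between the upper bound $1$, obtained from an explicit transport plan, and the lower bound $1$, obtained from a $1$-Lipschitz function via Theorem~\ref{thm_w_1_lower}. Since $d(x,y)=1$, these matching bounds give $\kappa_{\alpha}=0$ and hence, by Definition~\ref{def_ricci}, $\kappa=0$.

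For type A edges I take the representative $(0,1)$. By left-translation invariance of the Cayley graph, $\mu^{\alpha}_{1}$ is the $+1$-shift of $\mu^{\alpha}_{0}$, so the shift transport $\pi(v,v+1)=\mu^{\alpha}_{0}(v)$ carries $\mu^{\alpha}_{0}$ exactly onto $\mu^{\alpha}_{1}$ using only edges of weight $1$, yielding total cost $1$ and thus $W_{1}\le 1$. For the matching lower bound I will specify a $1$-Lipschitz function $f$ which decreases by $1$ across each of the five shift edges $(-k,1-k), (-1,0), (0,1), (1,2), (k,1+k)$ and which is extended to the rest of $\mathbb{Z}/n\mathbb{Z}$ so that $|f(u)-f(v)|\le 1$ on every $\pm 1$ and $\pm k$ edge. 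The dual sum $\sum_{v}f(v)\bigl(\mu^{\alpha}_{0}(v)-\mu^{\alpha}_{1}(v)\bigr)$ then evaluates to $1$. The hypotheses $k\ge 5$, $n\ge 2k+4$, and $n\ne 3k-2$ enter exactly at this step: they rule out the small-modulus identities (such as $2k\pm 1\equiv 0$, $2k+3\equiv 0$, or $3k-2\equiv 0\pmod{n}$) that would collapse distances like $d(k,-k)$, $d(-1,2)$, or $d(\pm 1, 1\pm k)$ and force a Lipschitz violation on the proposed $f$.

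For type B edges I take the representative $(0,k)$, and the analogous shift-by-$k$ transport $\pi(v,v+k)=\mu^{\alpha}_{0}(v)$ gives $W_{1}\le 1$. The four conditions listed in the theorem partition the admissible range of $n$ according to where it sits relative to the multiples $2k, 3k, 4k, 5k$; in each regime the distance pattern among the non-overlapping parts of $\mathrm{supp}(\mu^{\alpha}_{0})=\{0,\pm 1,\pm k\}$ and $\mathrm{supp}(\mu^{\alpha}_{k})=\{0,k,2k, k\pm 1\}$ is different, so a separately tailored $1$-Lipschitz function is required to attain the dual value $1$. I will construct the function case by case for the four ranges $[2k+4,3k-3]$, $[3k+3,4k-2]$, $[4k+2,5k-1]$, and $[5k+1,\infty)$, with the $k$-lower-bound hypothesis in each case ($k\ge 6, 5, 3, 3$ respectively) ensuring that the prescribed function fits into $\mathbb{Z}/n\mathbb{Z}$ without unwanted vertex-coincidences.

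The principal obstacle is the four-way case analysis for type B. Each regime has its own family of distance relations and demands its own explicit $1$-Lipschitz extension; and each such extension must be verified globally on $\mathbb{Z}/n\mathbb{Z}$, checking $|f(u)-f(v)|\le 1$ against every edge in both of the generators $\pm 1$ and $\pm k$, not merely those incident to the supports. Type A by contrast admits a single uniform construction once the two arithmetic obstructions to a global Lipschitz extension are excluded by the hypotheses on $n$ and $k$.
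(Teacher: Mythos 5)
Your strategy is the same as the paper's: bound $W_{1}(\mu^{\alpha}_{x},\mu^{\alpha}_{y})$ above by $1$ with an explicit transport plan (your pure shift plans are in fact slightly cleaner than the paper's type~B plan, which parks mass at $V_i$ and $V_{i+k}$ and ships the mass at $V_{i-k}$ a distance of $3$ to $V_{i+2k}$, so it must first verify $d(V_{i-k},V_{i+2k})=3$), and bound it below by $1$ via Theorem~\ref{thm_w_1_lower} with a $1$-Lipschitz function that drops by $1$ across each shift edge, splitting type~B into the four regimes of the hypothesis. That is exactly the architecture of the paper's proof.

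The problem is that you stop at the architecture. Every place where the actual mathematical content of the theorem lives is marked ``I will specify'' or ``I will construct the function case by case'': you never write down any of the five Lipschitz functions (one for type~A, one for each of conditions 1--4 for type~B), and you never verify the Lipschitz inequality against the $\pm 1$ and $\pm k$ edges. This is not a routine omission. The theorem's entire claim is that the arithmetic conditions $k\ge 5$, $n\ne 3k-2$, $n\ge 2k+4$ (type~A) and (\ref{b1})--(\ref{b4}) (type~B) are precisely what make such dual optimizers exist; the paper spends the bulk of its proof deriving these inequalities from explicit distance computations such as $|P_{V_{i-k},V_{i+k+2}}|=n-2k-2\ge 2$, $d(V_{i-k},V_{i+2k})=3$ under each of (\ref{b1})--(\ref{b3}), and the separate path analysis (with the mixed path $P_7$ through the type~B edge $V_iV_{i-k}$) needed for condition~(\ref{b4}). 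Without carrying out those constructions one cannot confirm that the stated conditions suffice --- for instance, your parenthetical list of ``small-modulus identities'' to be excluded for type~A is offered without derivation, and it is exactly the kind of claim that only the explicit verification can substantiate. As it stands the proposal is a correct plan for a proof, not a proof.
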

%
%%%%%%%%%%%%%%%%%%%%%%%%%%%%%%%%%%%%%%%%%%%%%%%%%%%%
\begin{proof}

\begin{figure}[htbp]
\begin{minipage}[c]{0.5\hsize}
    \centering
    \includegraphics[width=8cm]{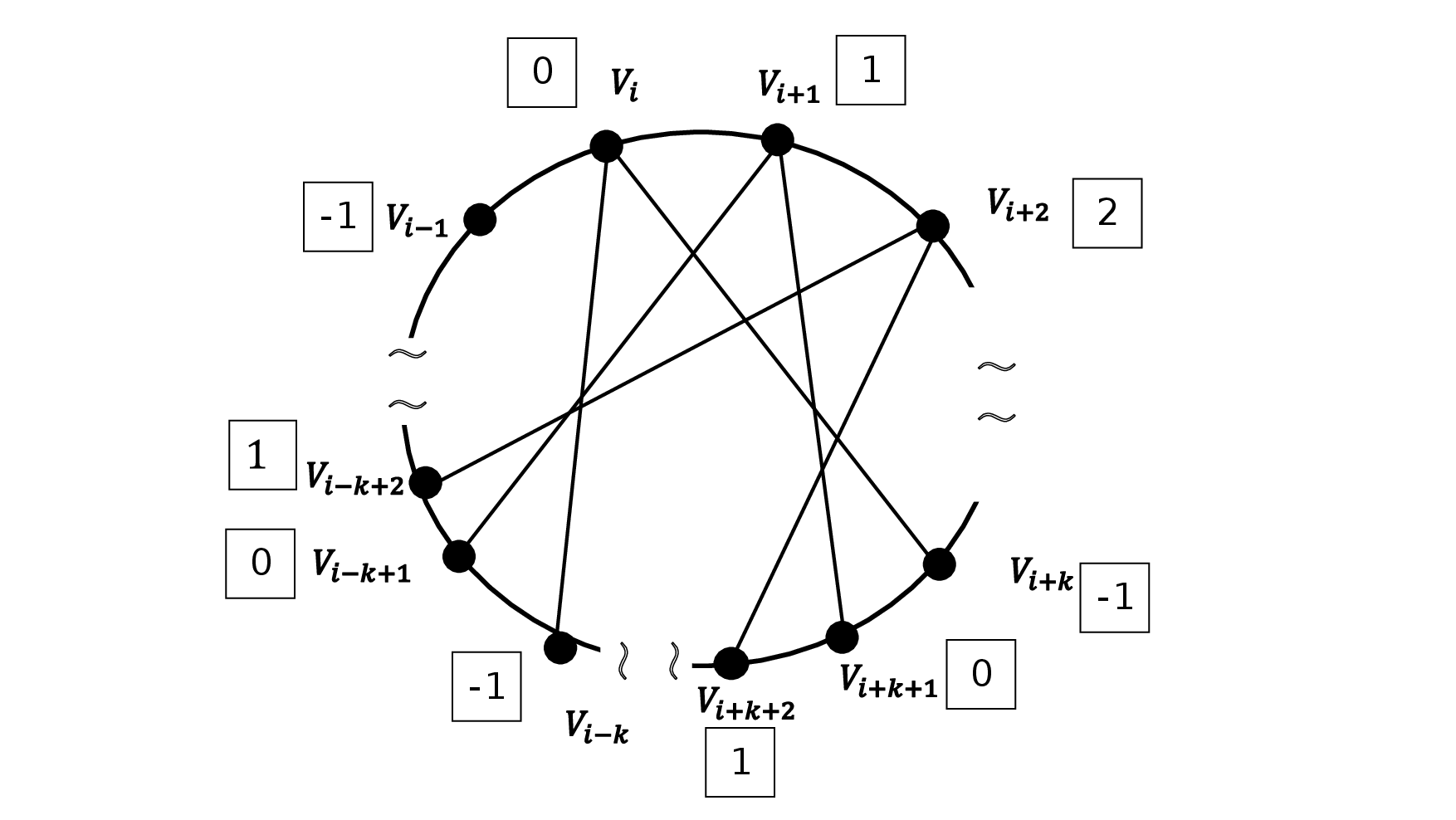}
    \caption{Type A in Cayley graphs of $\mathbb{Z}/n\mathbb{Z}$}
    \label{fig.9}
  \end{minipage}
  \begin{minipage}[c]{0.5\hsize}
    \centering
    \includegraphics[width=8cm]{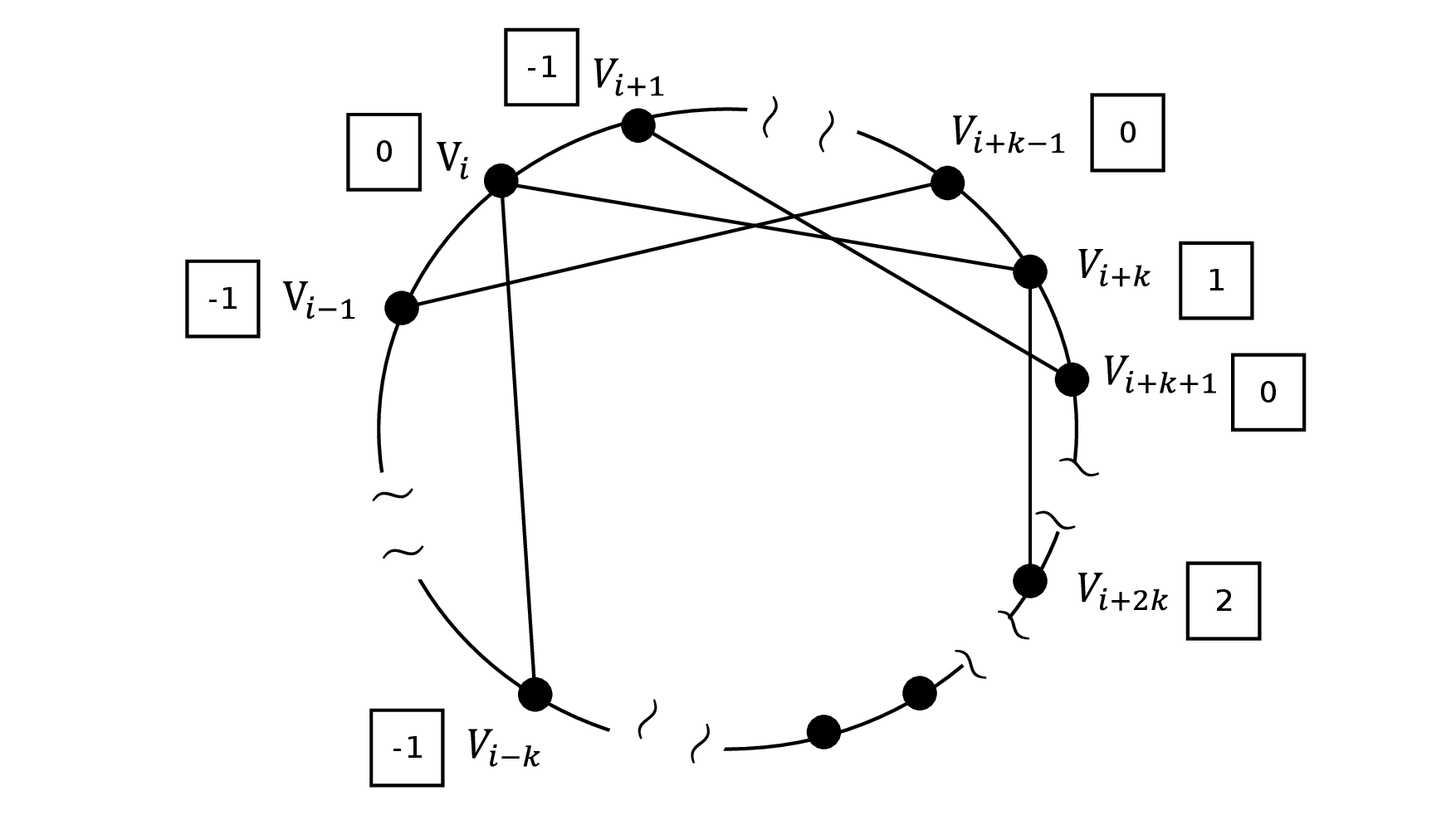}
    \caption{Type B in Cayley graphs of $\mathbb{Z}/n\mathbb{Z}$}
    \label{fig.10}
  \end{minipage}
\end{figure}
First, we calculate the curvature of edge $V_{i}V_{i+1}$ as type A in Figure \ref{fig.9}.
For $\mu^{\alpha}_{V_i}$ and  $\mu^{\alpha}_{V_{i+1}}$, 
we calculate cost $(\pi )$ under the following transport plan :
$ V_ {i-k}\rightarrow V_{i-k+1} : \frac{1-\alpha}{4}, \
V_{i+k} \rightarrow V_{i+k+1} : \frac{1-\alpha}{4}, \
V_{i-1} \rightarrow V_{i} : \frac{1-\alpha}{4}, \
V_{i} \rightarrow V_{i+1} : \alpha, 
V_{i+1} \rightarrow V_{i+2} : \frac{1-\alpha}{4} $, 
and the amount of transport between the other vertices is zero in Figure \ref{fig.9}.
From (\ref{W_1_inf}), we can estimate an upper bound of the Wasserstein distance 
for the above transport between probability measures.
We have the following result.
\begin{align}
W_1 \leq  \alpha + (1-\alpha).
\end{align}
From Definition \ref{def_alpha_ricci}, $\kappa_{\alpha} \geq 0$.
By Definition \ref{def_ricci}, we have the following result of the lower bound of the Ricci curvature.
\begin{align}
\kappa \geq  0.
\label{ineq.46}
\end{align}

Now, we find sufficient conditions under 
which the $1$-Lipschitz function in Figure \ref{fig.9} can be defined.
Let $P_{V_{i+2 }, V_{i+k}}$ be a path that consists of type A edges and
some vertices : 
 $P_{V_{i+2}, V_{i+k}} := V_{i+2}V_{i+3} \cdots V_{i+k-1}V_{i+k}$.
 $|P_{V_{i+2 }, V_{i+k}} |$ denotes the length of the path. Then,
\begin{align}
|P_{V_{i+2 }, V_{i+k}} | =(i+k)-(i+2) \geq 3 \Leftrightarrow k \geq 5. \label{a1} 
\end{align}
Also, from Figure \ref{fig.9},
\begin{align}
d(V_{i-1}, V_{i-k+2}) \geq 2 \Leftrightarrow (i-1) -(i-k+2) \geq 2
\Leftrightarrow k \geq 5
 \label{a11} 
\end{align}
Next, consider the distance between $V_{i-k}, V_{i+k+2}$ 
in Figure \ref{fig.9}.
Let  $P_{V_{i-k}, V_{i+k+1}}$ be a path that consists of type A edges and
some vertices : 
$P_{V_{i-k}, V_{i+k+2}} := V_{i-k}V_{i-k-1} \cdots V_{i+k+3}V_{i+k+2}$.
Then,
\begin{align}
|P_{V_{i-k}, V_{i+k+2}}| =(i-k)-(i+k+2-n)   \geq 2
\Leftrightarrow n \geq 2k+4.
\label{a2} 
\end{align}
By $d(V_{i-k}, V_{i+k+2}) \geq 2$, 
they do not connect by $\pm k$ (by a type B edge), so
\begin{align}
d(V_{i-k}, V_{i+k+2}) =(i-k)-(i+k+2-n)   \neq k
\Leftrightarrow n \neq 3k-2.
\label{a21} 
\end{align}

From the above, 
we have found all the sufficient conditions (\ref{a1}), (\ref{a2}) and (\ref{a21})
for  the $1$-Lipschitz function defined in Figure \ref{fig.9}. In short, the sufficient conditions are
\begin{align}
k \geq 5,  \quad n \neq 3k-2, \quad \mbox{ and }  \quad n \geq 2k+4. \label{AA}
\end{align}
These sufficient conditions (\ref{AA}) are condtions 
for type A  in Theorem \ref{thm_z_m}.
In other words, when the conditions of Theorem \ref{thm_z_m}  for type A  are satisfied, 
the upper bound of the Ricci curvature can be calculated using the Lipschitz function in Figure \ref{fig.9}.
So, using the $1$-Lipschtiz function in Figure \ref{fig.9} we estimate an upper bound of 
Ricci curvature by Theorem \ref{thm_w_1_lower}.
We define a $1$-Lipschtiz function as Figure $\ref{fig.9}$.
The number in each box beside each vertex is the value of the $1$-Lipschiz function.
We have the following result from Theorem \ref{thm_w_1_lower} and this $1$- Lipschiz function.

\begin{align}
W_1 \geq  \alpha +(1-\alpha).
\end{align}
From Definition \ref{def_alpha_ricci} 
we have the following the Ollivier Ricci curvature.
\begin{align}
\kappa_{\alpha}  \leq 0.
\end{align}

Therefore, we have the following  upper bound of the Ricci curvature
by Definition \ref{def_ricci},
\begin{align}
\kappa  \leq  0.
\label{ineq.47}
\end{align}

By (\ref{ineq.46}) and (\ref{ineq.47}), 
the curvature of all type A edges is zero.
\bigskip

Next, we investigate the curvature of edge $V_{i}V_{i+k}$ as type B.
\begin{comment}
From the conditions $ k \geq  5$ and $n \geq 2k+2$,
the Cayley graph is described as
 Figure \ref{fig.10}.
Especially, from $ k \geq  5$,
we find that the 
path $( V_{i+2}, V_{i+3}, \cdots , V_{k-1},  V_{i+k})$
consists of at least four points.
\end{comment}
At first, we find sufficient conditions under 
which the $1$-Lipschitz function in Figure \ref{fig.10} can be defined.

(i) Let $P_{V_{i+1 }, V_{i+k-1}}$ be a path that consists of type A edges and
some vertices: \\
 $P_{V_{i+1 }, V_{i+k-1}} := V_{i+1}V_{i+2} \cdots V_{i+k-2}V_{i+k-1}$.
Then,
\begin{align}
|P_{V_{i+1 }, V_{i+k-1}} |=(i+k-1)-(i+1) \geq 1 \Leftrightarrow k \geq 3 . \label{c1}
\end{align}

(ii) 
 Let $P_{V_{i+k+1 }, V_{i+2k}}$ be a path that consists of type A edges and
some vertices : 
 $P_{V_{i+k+1}, V_{i+2k}} := V_{i+k+1}V_{i+k+2} \cdots V_{i+2k-1}V_{i+2k}$.
 Then,
\begin{align}
|P_{V_{i+k+1 }, V_{i+2k}} | =(i+2k)-(i+k+1) \geq 2 \Leftrightarrow k \geq 3. \label{c2} 
\end{align}

(iii) Let $P_{V_{i-1 }, V_{i-k}}$ be a path that consists of type A edges and
some vertices : 
 $P_{V_{i-1}, V_{i-k}} := V_{i-1}V_{i-2} \cdots V_{i-k+1}V_{i-k}$.
 $|P_{V_{i+k+1 }, V_{i+2k}} |$ denotes the length of the path. Then,
\begin{align}
|P_{V_{i-1 }, V_{i-k}} | =(i-k)-(i+k+1) \geq 0 \Leftrightarrow k \geq 1. \label{c3} 
\end{align}

(iv) Here, we show that $d(V_{i-k}, V_{i+2k})=3$ 
in Figure \ref{fig.10}
when any of the conditions
(\ref{b1}), (\ref{b2}), or (\ref{b3}) in Theorem \ref{thm_z_m}
is satisfied.
In other words, we prove that any of (\ref{b1}), (\ref{b2}), or (\ref{b3})
 is sufficient condition for $d(V_{i-k}, V{i+2k})=3$, 
in the following.\\

The condition $3k+3 \leq n \leq 4k-2$ of 1 in Theorem \ref{thm_z_m} is equivalent to
\begin{align}
%3k+3 \leq n \leq 4k-2
%\Leftrightarrow n \geq 3K+3, n \leq 4k-2 \\
%%\Leftrightarrow n-3k \geq 3, n-4k \leq 2 \\
%\Leftrightarrow
 (i-k)-(i+2k-n) \geq 3, \quad (i+2k-n)-(i-2k) \geq 2.
\label{equation59}
\end{align}
%Note, however, that 
%\begin{align}
%V_{i}=V_{i+n} 
%\end{align}
%holds.
%(For any given $i$)\\

Consider the following paths:
\begin{align}
%P_{1}&:= V_{i-k} V_{i-k-1} \cdots V_{i+2k} \cdots V_{i-2k+1} V_{i-2k},\\
P_{1}&:= V_{i-k} V_{i-k-1} \cdots V_{i-2k+1}V_{i+2k},\\
P_{2}&:= V_{i+2k} V_{i+2k-1}\cdots V_{i-2k+1}V_{i-2k}V_{i-k},
\end{align}
(see Figure\ref{fig.004}).
From (\ref{equation59}), we find that 
\begin{align}
|P_{1}| \geq 3, \quad \mbox{and} \quad
%\label{path2}
|P_{2}| \geq 3.
\label{path1}
\end{align}
% Note, however, that 
%\begin{align}
%V_{i+2k}=V_{i+2k-n} 
%\end{align}
%holds.
%Here, we define the number of edges between vertex $V_{i}$ and $V_{j}$ as $|P|.$
\begin{figure}[H]
\centering
\includegraphics[width=8cm]{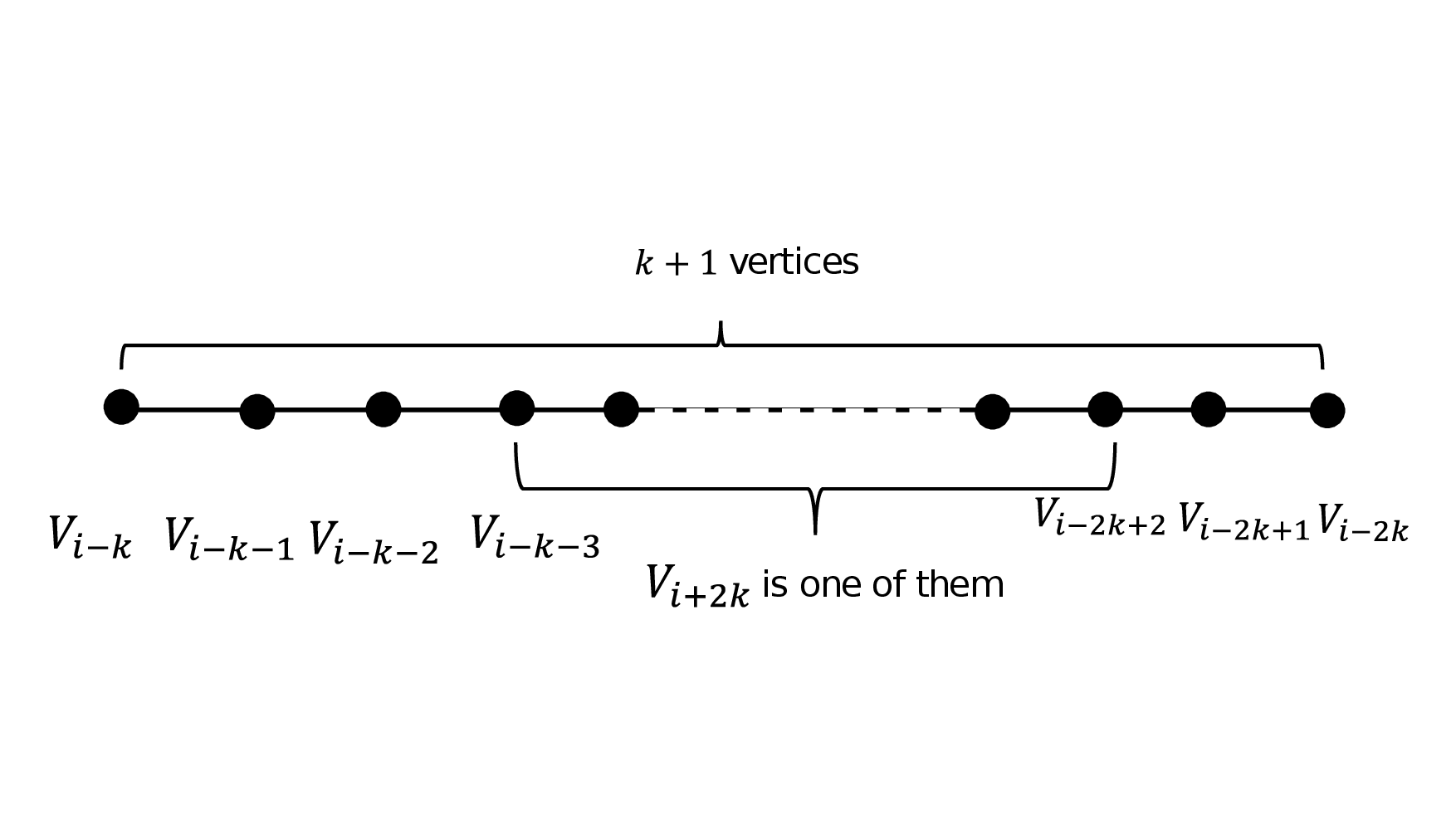} 
\caption{condition $1$}
\label{fig.004}
\end{figure}

From (\ref{path1}) %and (\ref{path3}), 
\begin{align}
d(V_{i-1}, V_{i+2k})=3.
\end{align}
The condition $4k+2 \leq n \leq 5k-2$ of 2 
in Theorem \ref{thm_z_m} is equivalent to
\begin{align}
(i-2k)-(i+2k-n) \geq 2, \quad (i+2k-n)-(i-3k) \geq 1.
\label{equation64}
\end{align}

Consider the following paths:
\begin{align}
P_{3}& := V_{i-k} V_{i-2k}V_{i-2k-1 } \cdots V_{i+2k},
\label{path4} \\
P_{4} &:= V_{i-k}V_{i-2k}V_{i-3k}V_{i-3k+1} \cdots V_{i+2k},
\label{path5}
\end{align}
%Here, we define the number of edges between vertex $V_{i}$ and $V_{j}$ as $|P|.$
(see Figure\ref{fig.005}). 
From (\ref{equation64}), we find that 
\begin{align}
|P_{3}| \geq 3, \quad 
%\label{path4}
|P_{4}| \geq 3.
\label{pathA}
\end{align}
\begin{figure}[H]
\centering
\includegraphics[width=8cm]{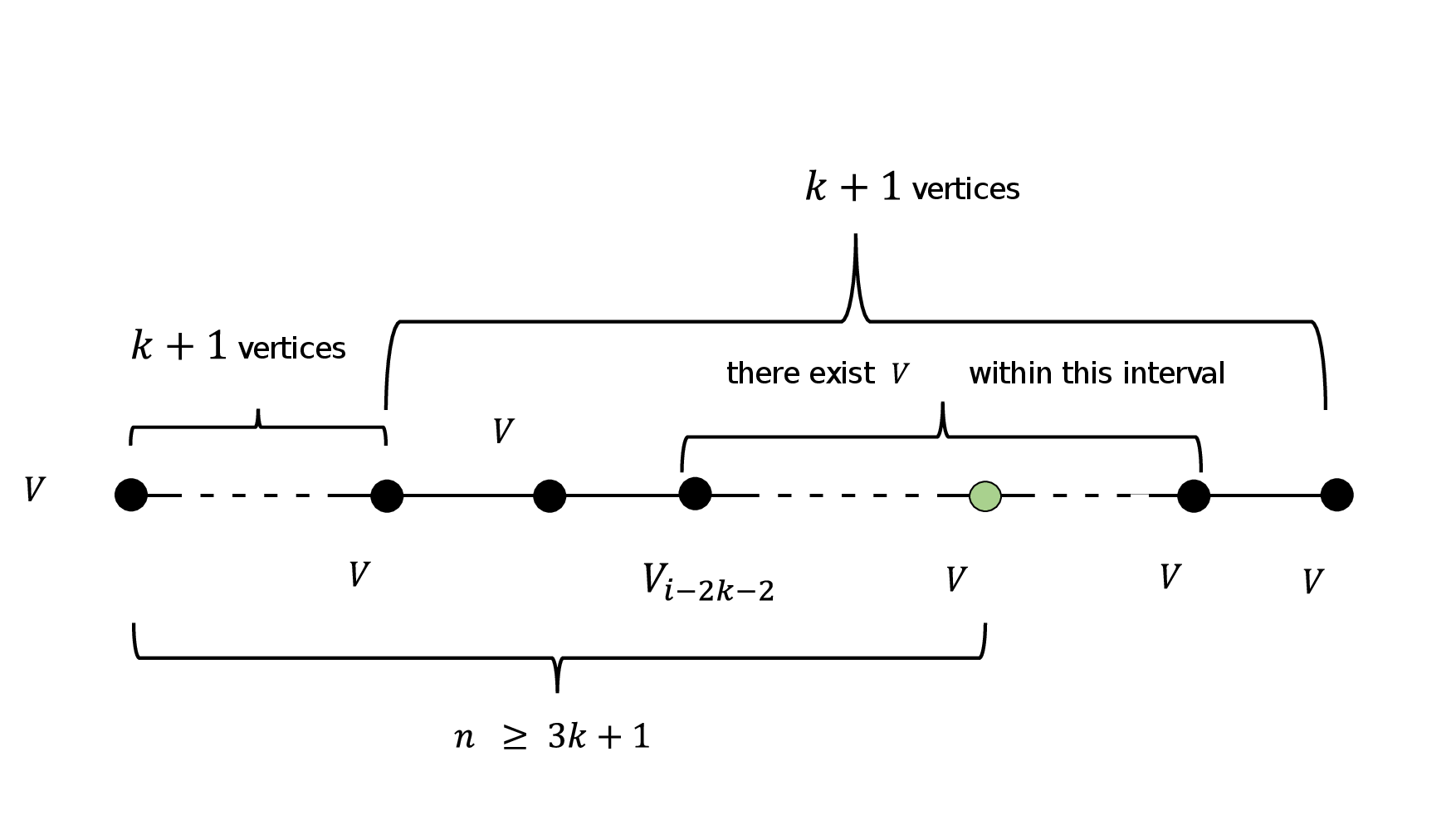} 
\caption{condition $2$}
\label{fig.005}
\end{figure}

%Note, however, that 
%\begin{align}
%V_{i+2k}=V_{i+2k-n} 
%\end{align}
%holds.
From (\ref{pathA}), 
\begin{align}
d(V_{i-1}, V_{i+2k})=3.
\end{align}

The condition $n \geq 5k+1$ of 3 in Theorem \ref{thm_z_m} is equivalent to
\begin{align}
%\Leftrightarrow n-5k \geq 1 \\
%\Leftrightarrow
 (i-3k)-(i+2k-n) \geq 1.
\label{equation69}
\end{align}
\begin{figure}[H]
\centering
\includegraphics[width=8cm]{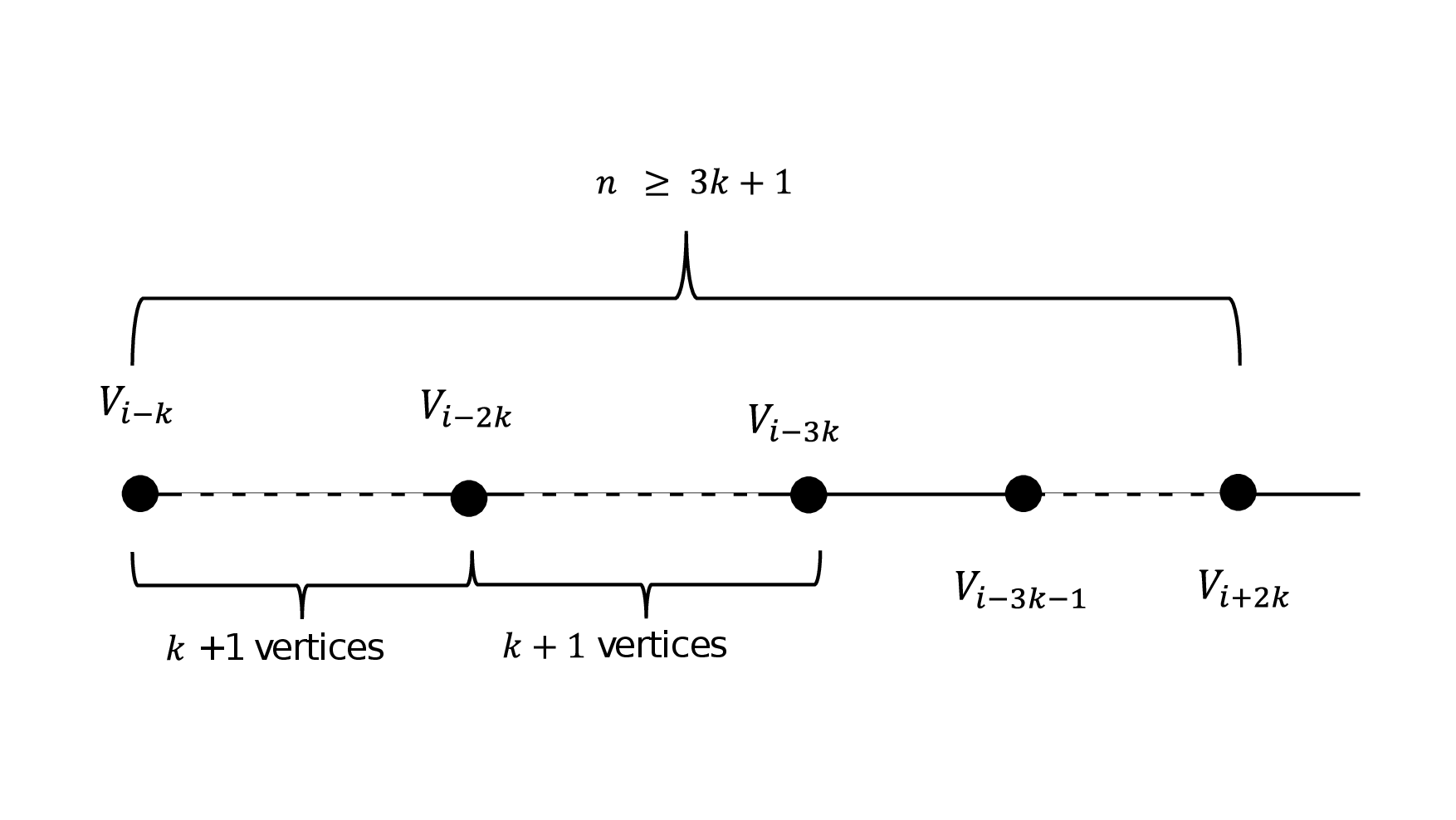} 
\caption{condition $3$}
\label{fig.BB}
\end{figure}
Let us consider the path: 
\begin{align}
P_{5} := V_{i-k} V_{i-2k}V_{i-3k}V_{i-3k-1 } \cdots V_{i+2k},
\label{path6}
\end{align}
(see Figure\ref{fig.BB}). 
From (\ref{equation69}), we find that 
\begin{align}
|P_{5}| \geq 3.
\label{path7}
\end{align}
From (\ref{path7}), 
\begin{align}
d(V_{i-1}, V_{i+2k})=3.
\end{align}
Thus it was shown that any one of (\ref{b1}), (\ref{b2}), or (\ref{b3})
in Theorem \ref{thm_z_m} derives 
$
d(V_{i-k}, V_{i+2k})=3.
$

From the above, 
we have found all the sufficient conditions for  the $1$-Lipschitz function defined in Figure \ref{fig.10}
are (\ref{c1}), (\ref{c2}), (\ref{c3}) 
and  any of (\ref{b1}), (\ref{b2}), or (\ref{b3}):
\begin{align}
k \geq 3 \quad \mbox{ and   any of } (\ref{b1}), (\ref{b2}), \mbox{or} \ (\ref{b3}) . \label{CC}
\end{align}

In other words, when one of the conditions 1 through 3 of Theorem \ref{thm_z_m}  is satisfied, 
the upper bound of Ricci curvature can be calculated using the $1$- Lipschitz function
 in Figure \ref{fig.10}.

%%%%%%%%%%%%%%%%%%%%%%%%%%%%%%%%%%%%%%%
\begin{comment}
From condition 1: $k \geq 5$ and $3k+3 \leq 4k-2$ 
By $k \geq 5 >3$, that is, $k \geq 5$ hold.
and By $3k+3 \leq 4k-2$,
$n \leq 3k+3$ hold.
Therefore, \ref{CC} satisfies condition 1.

From condition 2 : $k \geq 3$ and $4k+2 \leq 5k-1$ , By $3k+3 \leq 4k+2$, 
\begin{align}
n \geq 4k+2 \Leftrightarrow n \geq 3k +(k+2) \Leftrightarrow n \geq 3k+5 > 3k+2
\end{align}
hold.
Therefore, \ref{CC} satisfies condition 2.

From condition 3 : $n \geq 5k+1$, 
By $n \leq 5k+1$, 
\begin{align}
n \geq 5k+1 \Leftrightarrow n \geq 3k +(2k+1) 
\Leftrightarrow n \geq 3k+7 > 3k+3
\end{align}
hold.\\
Therefore, \ref{CC} satisfies condition 3.
From the above, conditions 1 through 3 
satisfy sufficient conditions\ref{CC}, 
so the $1$-Lipschitfunctions in the Figure \ref{fig.25} can be applied.

Similarly.
we find sufficient conditions under 
which the $1$-Lipschitz function in Figure\ref{fig.25} can be defined.
\end{comment}
%%%%%%%%%%%%%%%%%%%%%%%%%%%%%%%%%%%%%%%%%%%%%%%%%%%%%%%

\begin{figure}[H]
\centering
\includegraphics[width=8cm]{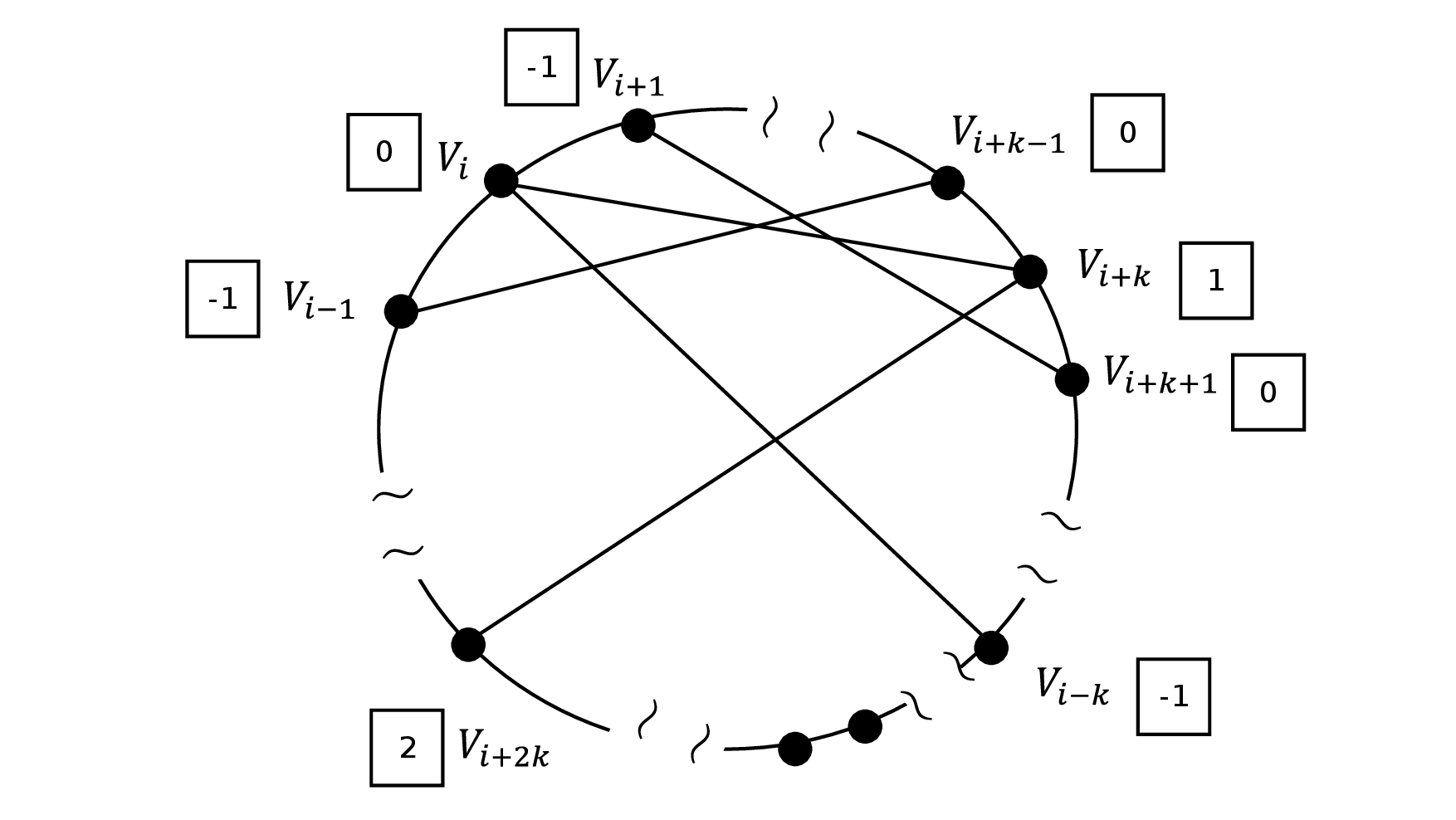} 
\caption{Condition 4 in type B}
\label{fig.222}
\end{figure}
Also, we find sufficient conditions under 
which the $1$-Lipschitz function in Figure\ref{fig.222} can be defined.\\

(v) 
Let $P_{V_{i+k-1 }, V_{i+1}}$ be a path that consists of type A edges and
some vertices: \\
 $P_{V_{i+k-1 }, V_{i+1}} := V_{i+k-1}V_{i+k-2} \cdots V_{i+2}V_{i+1}$.
 $|P_{V_{i+k-1 }, V_{i+1}} |$ denotes the length of the path. Then,
\begin{align}
|P_{V_{i+k-1 }, V_{i+1}} |=(i+k-1)-(i+1) \geq 1 \Leftrightarrow k \geq 3 . \label{c5}
\end{align}
This condition implies $d(V_{i+1}, V_{i+k-1}) \geq 1$.

(vi) Let $P_{V_{i-k}, V_{i+k+1}}$ be a path that consists of type A edges and
some vertices: \\
 $P_{V_{i-k}, V_{i+k+1}} := V_{i-k}V_{i-k-1} \cdots V_{i+k+2}V_{i+k+1}$.
Then,
\begin{align}
|P_{V_{i-k}, V_{i+k+1}} |=(i-k)-(i+k+1-n) \geq 1 \Leftrightarrow n \geq 2k+2 . \label{c6}
\end{align}
Here we use $V_{i-k+1}=V_{i-k+1-n}.$
This condition implies $d(V_{i+k+1}, V_{i-k}) \geq 1$.

(vii) Let $P_{V_{i-1}, V_{i+2k}}$ be a path that consists of type A edges and
some vertices: \\
 $P_{V_{i-1}, V_{i+2k}} := V_{i-1}V_{i-2} \cdots V_{i+2k+1}V_{i+2k}$.
 Then,
\begin{align}
|P_{V_{i-1}, V_{i+2k}} |=(i-1)-(i+2k-n) \geq 3 \Leftrightarrow n \geq 2k+4 . \label{c7}
\end{align}
Here we use $V_{i+2k}=V_{i+2k-n}.$
This condition implies $d(V_{i+2k}, V_{i-1}) \geq 3$.

(viii) Let $P_{V_{i+2k}, V_{i-k}}$ be a path that consists of type A edges and
some vertices: \\
 $P_{V_{i+2k}, V_{i-k}} := V_{i+2k}V_{i+2k-1} \cdots V_{i-k+1}V_{i-k}$.
Then,
\begin{align}
|P_{V_{i+2k}, V_{i-k}} |=(i+2k-n)-(i-k) \geq 3 \Leftrightarrow n \leq 3k-3 . \label{c8}
\end{align}
Here we use $V_{i+2k}=V_{i+2k-n}.$
This is one of the sufficient conditions.

From the above, 
we have found all the sufficient conditions (\ref{c5}), (\ref{c6}), (\ref{c7}) and  (\ref{c8}) 
for  the $1$-Lipschitz function defined in Figure \ref{fig.222}. In short, 
the sufficient conditions are
\begin{align}
k \geq 3 \quad \mbox{ and }  \quad  2k+4 \leq n \leq 3k-3. \label{CCC}
\end{align}
However for $k=3, 4, 5$ there is no $n$ that satisfies (\ref{CCC}).
Thus, we found that the sufficient condition to define 
the $1$-Lipschtiz function in Figure \ref{fig.222}: \\ 
\begin{align}
k \geq 6 \quad \mbox{ and }  \quad  2k+4 \leq n \leq 3k-3. \label{CCCC}
\end{align}

In other words, when condition 4  of Theorem \ref{thm_z_m}  is satisfied, 
the upper bound of Ricci curvature can be calculated 
using the $1$-Lipschitz function in Figure \ref{fig.222}.
\\

We want to adopt the following transportation cost
between $\mu^{\alpha}_{V_{i}}$ and $\mu^{\alpha}_{V_{i+k}}$
for any case 1-4:
\begin{align} 
&V_ {i} \rightarrow V_{i+k} : \alpha-\frac{1-\alpha}{4},
V_{i-1} \rightarrow V_{i+k-1} : \frac{1-\alpha}{4},
V_{i+1} \rightarrow V_{i+k+1} : \frac{1-\alpha}{4},
\label{trans1}\\
&V_{i-k} \rightarrow V_{i+2k} : \frac{1-\alpha}{4} \times 3. 
\label{trans2} \\
&\mbox{All other transport masses are zero.}\label{trans0}
\end{align}
In (\ref{trans1}) above, 
since 
\begin{align}
d(V_{i}, V_{i+k})=d(V_{i+1}, V_{i+k+1})=d(V_{i-1}, V_{i+k-1})=1,
\end{align}
 transport mass in (\ref{trans1}) is proper. 
On the other hand, it is unclear if transport (\ref{trans2}) is proper.
Since the path : $V_{i-k}V_{i}V_{i+k}V_{i+2k}$ exists 
in Figure \ref{fig.10} and  Figure \ref{fig.222}, 
$d(V_{i-k}, V_{i+2k}) \leq 3$ holds.

For cases 1, 2, or 3, as we already saw in 
(iv) $d(V_{i-k}, V_{i+2k})=3$, (\ref{trans2}) is correct.
For the case 4, we have to check $d(V_{i-k}, V_{i+2k})=3$ 
to use (\ref{trans2}).

Let us consider $d(V_{i-k}, V_{i+2k})$ in Figure \ref{fig.222}.
The condition $2k+4 \leq n \leq 3k-3$ of 4 
in Theorem \ref{thm_z_m} is equivalent to
\begin{align}
%2k+4 \leq n \leq 3k-3.\\
%\Leftrightarrow n-2k \geq 4, \\
%3k-n \geq 3. \\
%\Leftrightarrow 
i-(i+2k-n)+1 \geq 5,  \quad
(i+2k-n)-(i-k) \geq 3. 
\label{DD}
\end{align}

Let us consider the following paths:
\begin{align}
P_{6}& := V_{i-k} V_{i-k+1}V_{i-k+2}
 \cdots V_{i+2k-1}V_{i+2k}, \qquad |P_{6}| \geq3. \\
P_{7} &:= V_{i+2k}V_{i+2k+1}
 \cdots V_{i-1}V_{i}V_{i-k}.
\label{path8}
\end{align}
\begin{figure}[H]
\centering
\includegraphics[width=8cm]{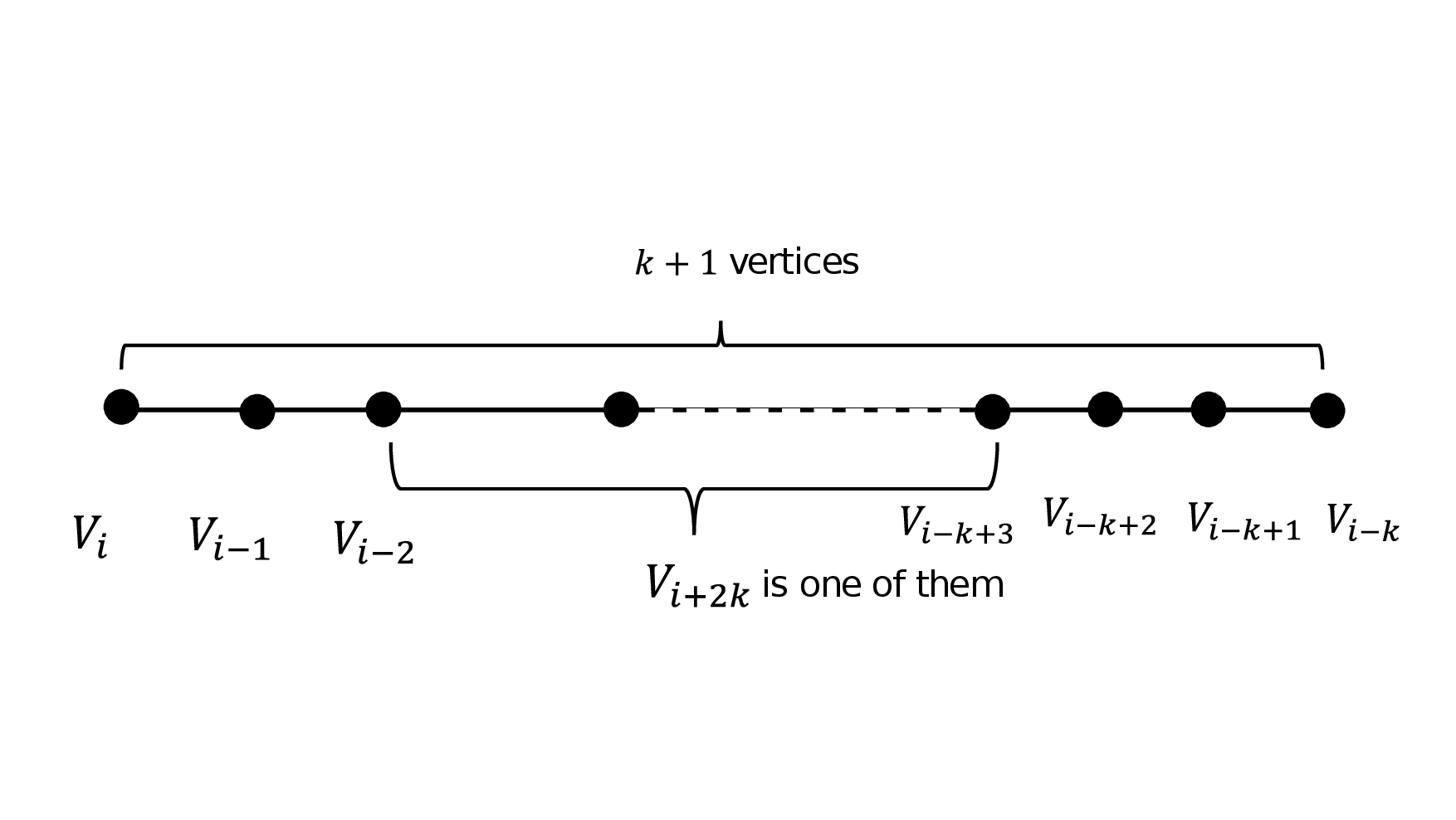} 
\caption{condition $4$}
\label{fig.gg}
\end{figure}
\noindent
(See Figure\ref{fig.gg}). 
 Note that $P_7$ consists not only of type A edges
but also type B edge $V_{i}V_{i-k}$.\\
From (\ref{DD}) we find that
\begin{align}
|P_{6}| \geq 3, \quad \mbox{and} \quad
|P_{7}| \geq 5.
\label{path9}
\end{align}
From (\ref{path9}),  we can conclude
\begin{align}
d(V_{i-k}, V_{i+2k})=3.
\end{align}
Thus it was shown that (\ref{b4}) in Theorem \ref{thm_z_m} 
derives $d(V_{i-k}, V_{i+2k})=3$ in Figure \ref{fig.222}.\\

Therefore, for any one of the conditions 1, 2, 3, or 4 in the Theorem \ref{thm_z_m}, 
the transportation plan (\ref{trans1}), (\ref{trans2}) and (\ref{trans0}) is consistent.
\bigskip

By Definition \ref{trans}, 
we can estimate Wasserstein distance for the transport between probability measures.

The transport cost is given by
(\ref{trans1}), (\ref{trans2}), and (\ref{trans0}).
From (\ref{W_1_inf}), we can estimate an upper bound of the Wasserstein distance.

\begin{align}
W_1 \leq  \alpha + (1-\alpha).
\end{align}
From Definition \ref{def_alpha_ricci},
we have $
\kappa_{\alpha} \geq 0$.
By Definition \ref{def_ricci}, we have the following result of the lower bound of the Ricci curvature.
\begin{align}
\kappa \geq  0.
\label{ineq.48}
\end{align}

We introduced a $1$-Lipschtiz function as Figure \ref{fig.10} or \ref{fig.222}.
The number in each box beside each vertex is the value of the $1$-Lipschiz function.
We have the following result from Theorem $1$ and this $1$- Lipschiz function.

\begin{align}
W_1 \geq  \alpha + (1-\alpha).
\end{align}
From Definition \ref{def_alpha_ricci}, 
we have 
$\kappa_{\alpha}  \leq 0$.
Therefore, we have the following upper bound of the Ricci curvature
by Definition \ref{def_ricci},
\begin{align}
\kappa  \leq  0.
\label{ineq.49}
\end{align}

By (\ref{ineq.48}) and (\ref{ineq.49}), the curvature of all type B edges is $\kappa =  0$.
\qed

\end{proof}

\appendix

\section{proof of propositions}

\subsection{Proof of Proposition \ref{D4}}\label{prop4proof}
\begin{proof}
Cayley graph of $D_{4}$ with $S$ is a regular tetrahedral Figure \ref{fig.3a}.
In the current case, there is no distinction between type A and type B edges.
First, we calculate the Ricci curvature of the edge $ae$ as the type A or B.
\begin{figure}[H]
\centering
\includegraphics[width=8cm]{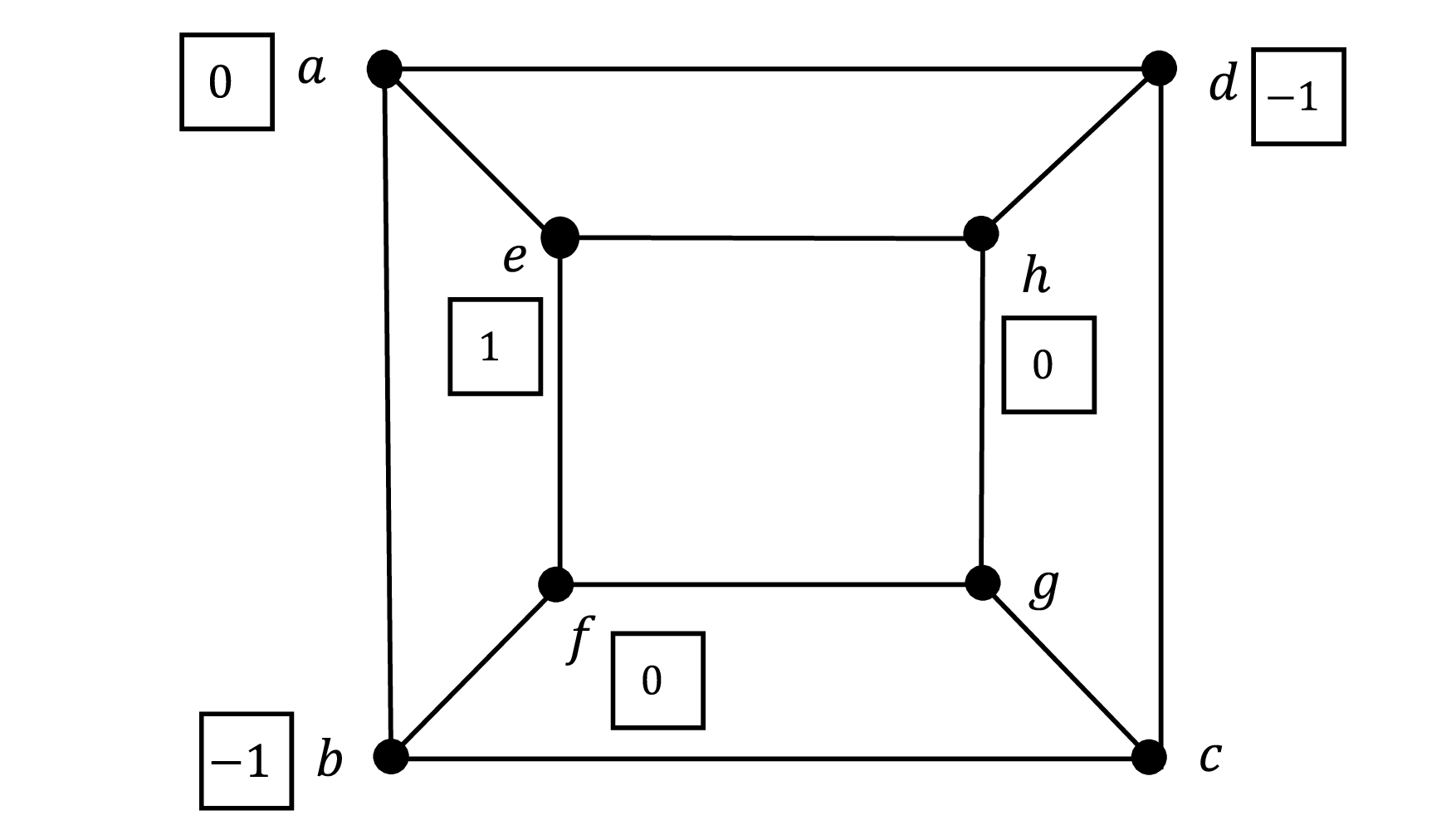} 
\caption{The Cayley graph of $D_{4}$}
\label{fig.3a}
\end{figure}

We consider the transport cost from 
$\mu^{\alpha}_a$ to $\mu^{\alpha}_e$.
By Definition \ref{def_prob_measure}, vertex $a$ has probability $\alpha$  for $\mu^{\alpha}_a$.
Each probability of vertex $b, d$ and $e$  
is  $\frac{1-\alpha}{3}$.
On the other hand, vertex $e$ has probability $\alpha$,
and $a, f$ and $h$ have $\frac{1-\alpha}{3}$ for $\mu^{\alpha}_{e}$.
Satisfying Definition \ref{trans}, we can provide the following  transport cost:
$a \rightarrow e : \ \pi(a, e) = \alpha-\frac{1-\alpha}{3}, \
b \rightarrow f : \pi(b, f) = \frac{1-\alpha}{3}, \
\ d \rightarrow h : \pi(d, h) = \frac{1-\alpha}{3}$, 
and the amount of transport between the other vertices is zero.
From (\ref{W_1_inf}), we can estimate an upper bound of the Wasserstein distance 
for the above transport between probability measures:
\begin{align}
W_1 \leq  \alpha +\frac{1-\alpha}{3}.
\end{align}
From Definition \ref{def_alpha_ricci},
we have the following result about the Ollivier Ricci curvature.
\begin{align}
\kappa_{\alpha}(a, e) \geq \frac{2}{3} (1-\alpha).
\end{align}
By Definition \ref{def_ricci}, we have the following result of the lower bound of the Ricci curvature.
\begin{align}
\kappa(a, e) \geq  \frac{2}{3}.
\label{ineq.5}
\end{align}

Next, using a $1$-Lipschtiz function, we estimate an upper bound of 
the Ricci curvature by Theorem \ref{thm_w_1_lower}.
We define a $1$-Lipschtiz function as Figure $\ref{fig.3a}$.
The number in each box beside each vertex is the value of the $1$-Lipschiz function.
We have the following result from Theorem \ref{thm_w_1_lower} and this $1$- Lipschiz function.

\begin{align}
W_1 \geq  \alpha +\frac{1-\alpha}{3}.
\end{align}
From Definition \ref{def_alpha_ricci}, 
the Ollivier Ricci curvature value is bounded as
\begin{align}
\kappa_{\alpha} (a, e)\leq \frac{2}{3}(1-\alpha).
\end{align}
Therefore, we have the following  upper bound of the Ricci curvature
by Definition \ref{def_ricci},
\begin{align}
\kappa(a, e)  \leq \frac{2}{3}.
\label{ineq.6}
\end{align}

By (\ref{ineq.5}) and (\ref{ineq.6}), 
\begin{align}
\kappa(a, e) = \frac{2}{3}.
\end{align}
Thus, the curvature of all edges is
$\kappa = \frac{2}{3}$.

\rightline{\qed}
\end{proof}

\subsection{Proof of Proposition \ref{D5}}\label{prop5proof}
\begin{proof}
\begin{figure}[htbp]
\begin{minipage}[c]{0.5\hsize}
    \centering
    \includegraphics[width=8cm]{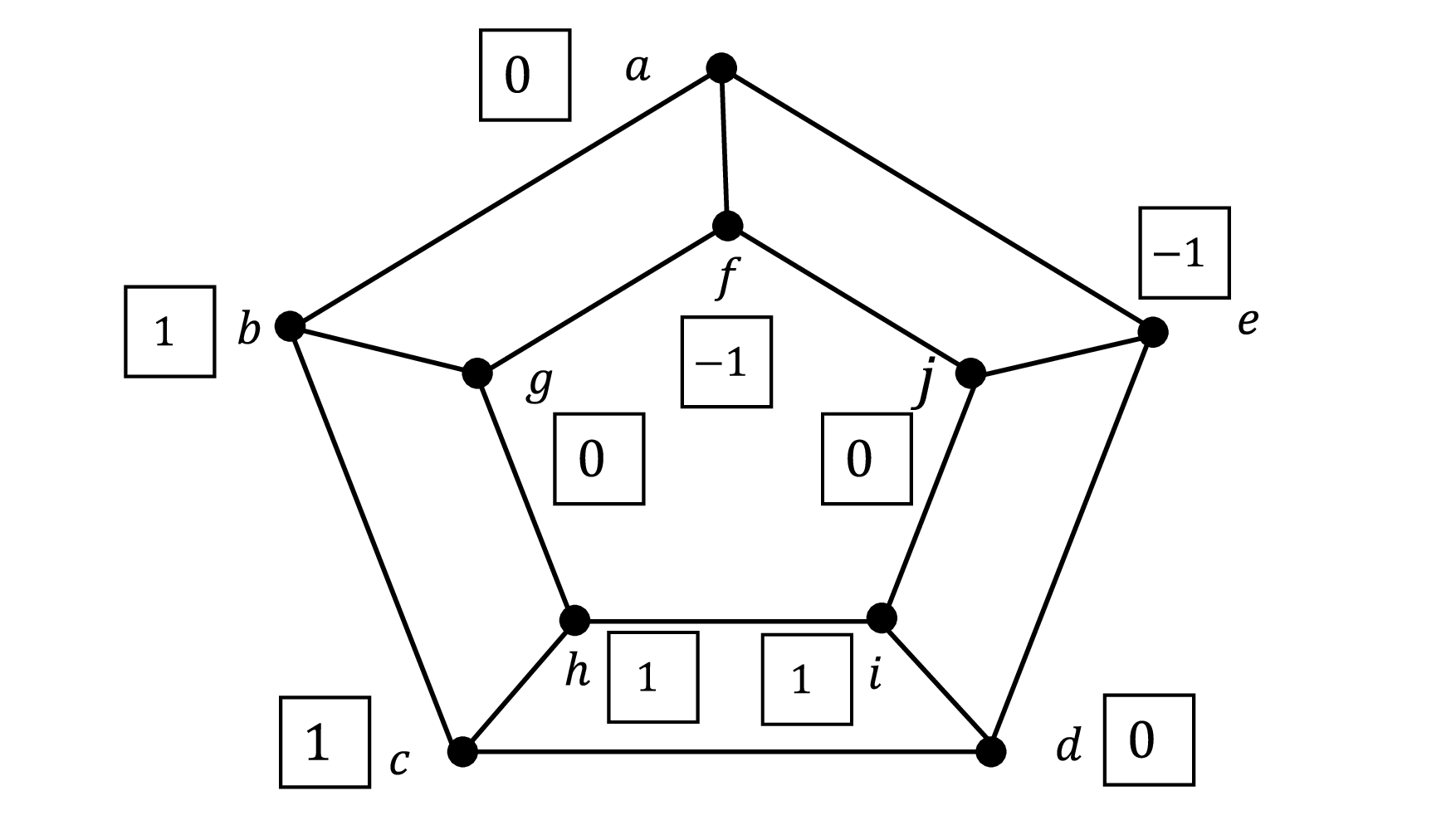}
    \caption{Type A in the Cayley graph of $D_{5}$}
    \label{fig.5a}
  \end{minipage}
  \begin{minipage}[c]{0.5\hsize}
    \centering
    \includegraphics[width=8cm]{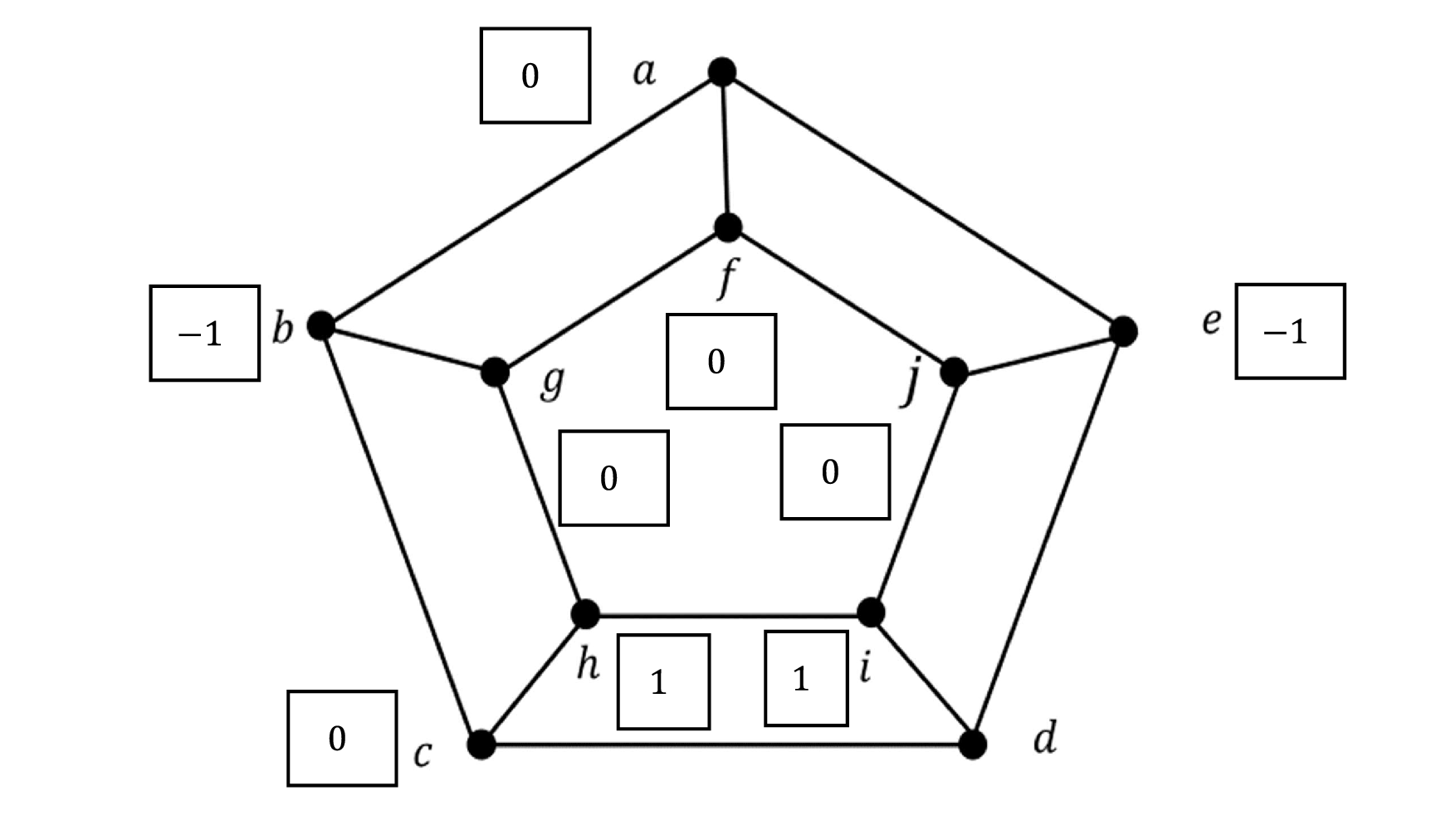}
    \caption{Type B in the Cayley graphs of $D_{5}$}
    \label{fig.5b}
  \end{minipage}
\end{figure}
\

At first, we calculate the Ricci curvature of the edge $ab$ as type $A$ in Figure \ref{fig.5a}.
By Definition \ref{def_prob_measure}, vertex $a$ has probability $\alpha$ for $\mu^{\alpha}_{a}$, 
on the other hand,  vertex $b$, $f$ and $e$ each has probability $\frac{1-\alpha}{3}$.
$\mu^{\alpha}_{b}$ is defined similarly. 
Satisfying Definition \ref{trans}, we provide the following  transport cost:
$a \rightarrow b  : \alpha-\frac{1-\alpha}{3}, \
f \rightarrow g   : \frac{1-\alpha}{3}, \
e \rightarrow c  :  \frac{1-\alpha}{3} \times 2$, \
and the amount of transport between the other vertices is zero.
From (\ref{W_1_inf}), we can estimate an upper bound of the Wasserstein distance 
for the above transport between probability measures.
We have the following result.
$$W_1 \leq  \alpha + \frac{2}{3}(1-\alpha)$$
From Definition \ref{def_alpha_ricci},
we have
$\kappa_{\alpha} \geq  \frac{1-\alpha}{3}$.
By Definition \ref{def_ricci}, we have the following result of the lower bound of the Ricci curvature.
\begin{align}
\kappa \geq  \frac{1}{3}.
\label{ineq.7}
\end{align}

Next, using a $1$-Lipschtiz function, we estimate an upper bound of 
the Ricci curvature by Theorem \ref{thm_w_1_lower}.
We define a $1$-Lipschtiz function as Figure $\ref{fig.5a}$.
The number in each box beside each vertex is the value of the $1$-Lipschiz function.
We have the following result from Theorem \ref{thm_w_1_lower} and this $1$- Lipschiz function.

\begin{align}
W_1 \geq  \alpha +  \frac{2}{3}(1-\alpha).
\end{align}
From Definition \ref{def_alpha_ricci},
we have $\kappa_a \leq  \frac{1}{3}(1-\alpha)$ for the Ollivier Ricci curvature value.
Therefore, we have the following  upper bound of the Ricci curvature
by Definition \ref{def_ricci},
\begin{align}
\kappa  \leq \frac{1}{3}.
\label{ineq.8}
\end{align}

By (\ref{ineq.7}) and (\ref{ineq.8}),  the curvature of all type A edges is
$\kappa = \frac{1}{3}$.
\bigskip

Next, we calculate the Ricci curvature of the edge $af$  as type $B$ in Figure \ref{fig.5b}.
We consider the transport cost.
By Definition \ref{def_prob_measure}, vertex $a$ has probability $\alpha$, and vertex $b, f$ and $e$ each has probability $\frac{1-\alpha}{3}$
 in $\mu^{\alpha}_{a}$.
$\mu^{\alpha}_{f}$ is defined similarly.
Satisfying Definition \ref{trans}, we can provide the following  transport cost:
$a \rightarrow f : \alpha-\frac{1-\alpha}{3}, \
b \rightarrow g  : \frac{1-\alpha}{3}, \
e\rightarrow j : \frac{1-\alpha}{3}$, \
and the amount of transport between the other vertices is zero.
From (\ref{W_1_inf}), we can estimate an upper bound of the Wasserstein distance 
for the above transport between probability measures.
\begin{align}
W_1 \leq  \alpha + \frac{(1-\alpha)}{3}.
\end{align}
From Definition \ref{def_alpha_ricci},
we have 
$\kappa_{\alpha} \geq  2 \times \frac{(1-\alpha)}{3}$.
By Definition \ref{def_ricci}, we have the following result of the lower bound of the Ricci curvature.
\begin{align}
\kappa \geq  \frac{2}{3}.
\label{ineq.9}
\end{align}

Next, using a $1$-Lipschtiz function, we estimate an upper bound of 
the Ricci curvature by Theorem \ref{thm_w_1_lower}.
We define a $1$-Lipschtiz function as Figure $\ref{fig.5b}$.
The number in each box beside each vertex is the value of the $1$-Lipschiz function.
We have the following result from Theorem \ref{thm_w_1_lower} and this $1$- Lipschiz function.

\begin{align}
W_1 \geq  \alpha + \frac{(1-\alpha)}{3}.
\end{align}
From Definition \ref{def_alpha_ricci},  
we have 
$\kappa_{\alpha} \leq 2 \times \frac{1-\alpha}{3}$.
Therefore, we have the following  upper bound of the Ricci curvature
by Definition \ref{def_ricci},
\begin{align}
\kappa  \leq \frac{2}{3}.
\label{ineq.10}
\end{align}

By (\ref{ineq.9}) and (\ref{ineq.10}), the curvature of all type B edges is
$\kappa = \frac{2}{3}$.\rightline{\qed}
\rightline{\qed}
\end{proof}

\subsection{Proof of Proposition \ref{D6}}\label{prop6proof}
\begin{proof}
First, we prove that the Ricci curvature of type A   is $\kappa = 0$.
\begin{figure}[htbp]
\centering
\includegraphics[width=10cm]{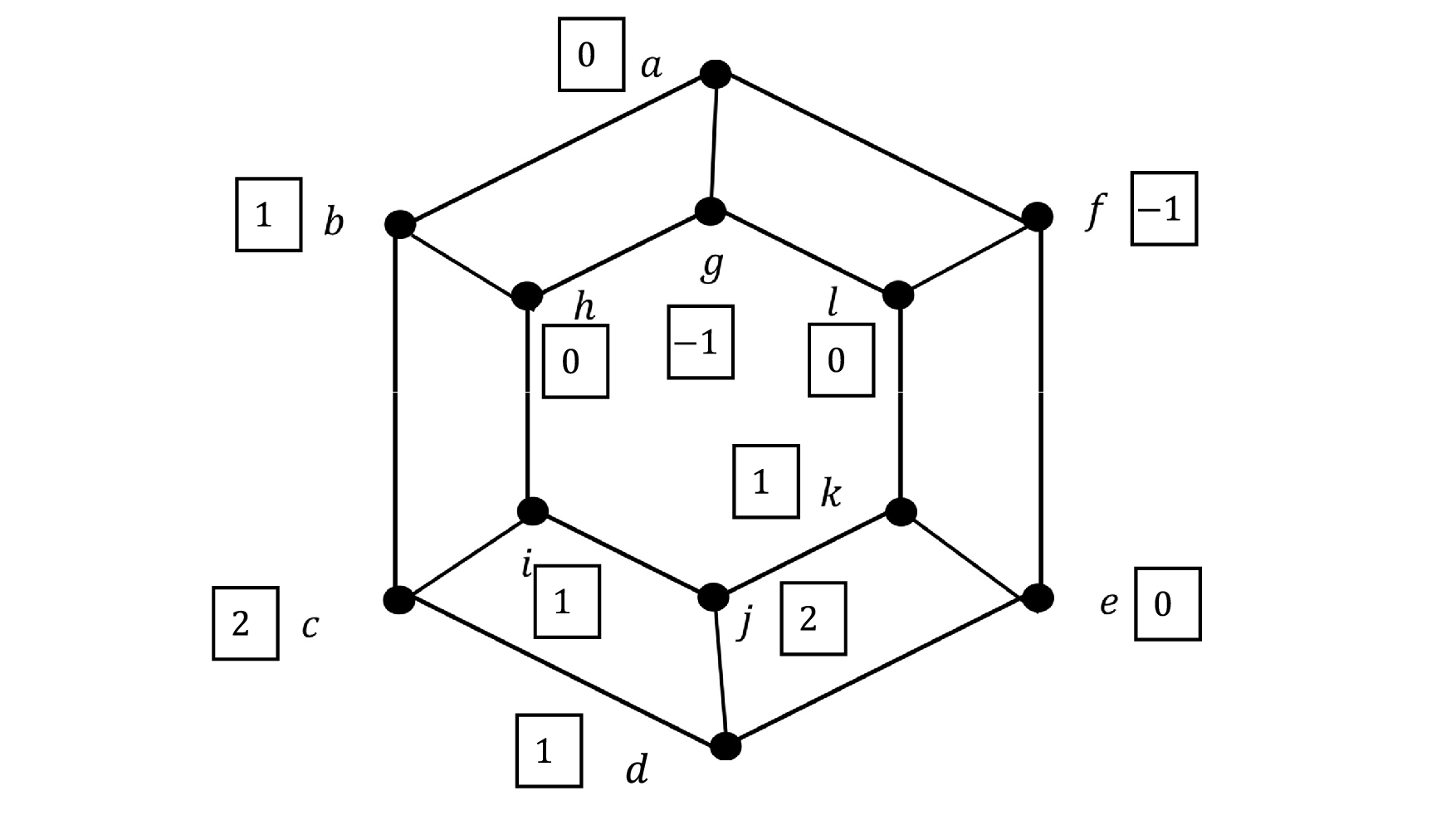} 
\caption{Cayley graph of $D_{6}$ with $S$}
\label{fig.4}
\end{figure}

Now, we calculate. Ricci curvature of the edge $ab$ in Figure 5

We are considering a transport plan.
By Dare cons ering $4$, vertex $a$ has probability $\alpha$. 
on the other hand,  vertex $b, f$ and $g$ each has probability $\frac{1-\alpha}{3}$.
Satisfying Definition $2$, we can provide the following  transport plan:\\
$f \rightarrow a ; \frac{1-\alpha}{3}, \
a \rightarrow b  ; \alpha, \
b \rightarrow c ;  \frac{1-\alpha}{3}, 
g \rightarrow h ; \frac{1-\alpha}{3}$, 
but the amount of transport between the other vertices is zero.
The following results can be calculated from these transport plans.
\begin{align}
W_1 \leq  \alpha + (1-\alpha).
\end{align}
Next, by Definition $5$, we get the following result of the  Ollivier  Ricci curvature value.
\begin{align}
\kappa_{\alpha} \geq  0.
\end{align}
By Definition $7$, we get the following lower bound of the Ricci curvature.
\begin{align}
\kappa \geq  0.
\label{ineq.11}
\end{align}

We define a $1-$Lipschtiz function as figure $5$.
The function in each beside each vertex is the value of the $1$-Lipschiz function.
We have the following result from Theorem $1$ and this $1-$ Lipschiz function.
\begin{align}
W_1 \geq  \alpha + (1-\alpha).
\end{align}
We use Definition$5$ and $7$, and we get the following the Ollivier Ricci curvature value.
\begin{align}
\kappa_a \leq 0.
\end{align}
Therefore, we get the following upper bound of the Ricci curvature.
\begin{align}
\kappa  \leq 0.
\label{ineq.12}
\end{align}

By ($\ref{ineq.11})$ and ($\ref{ineq.12}$), the curvature of all edges is as follows.
\begin{align}
\kappa = 0.
\end{align}

Next, we prove that the Ricci curvature of type B   is $\kappa =  \frac{2}{3}$.

\begin{figure}[htbp]
\centering
\includegraphics[width=10cm]{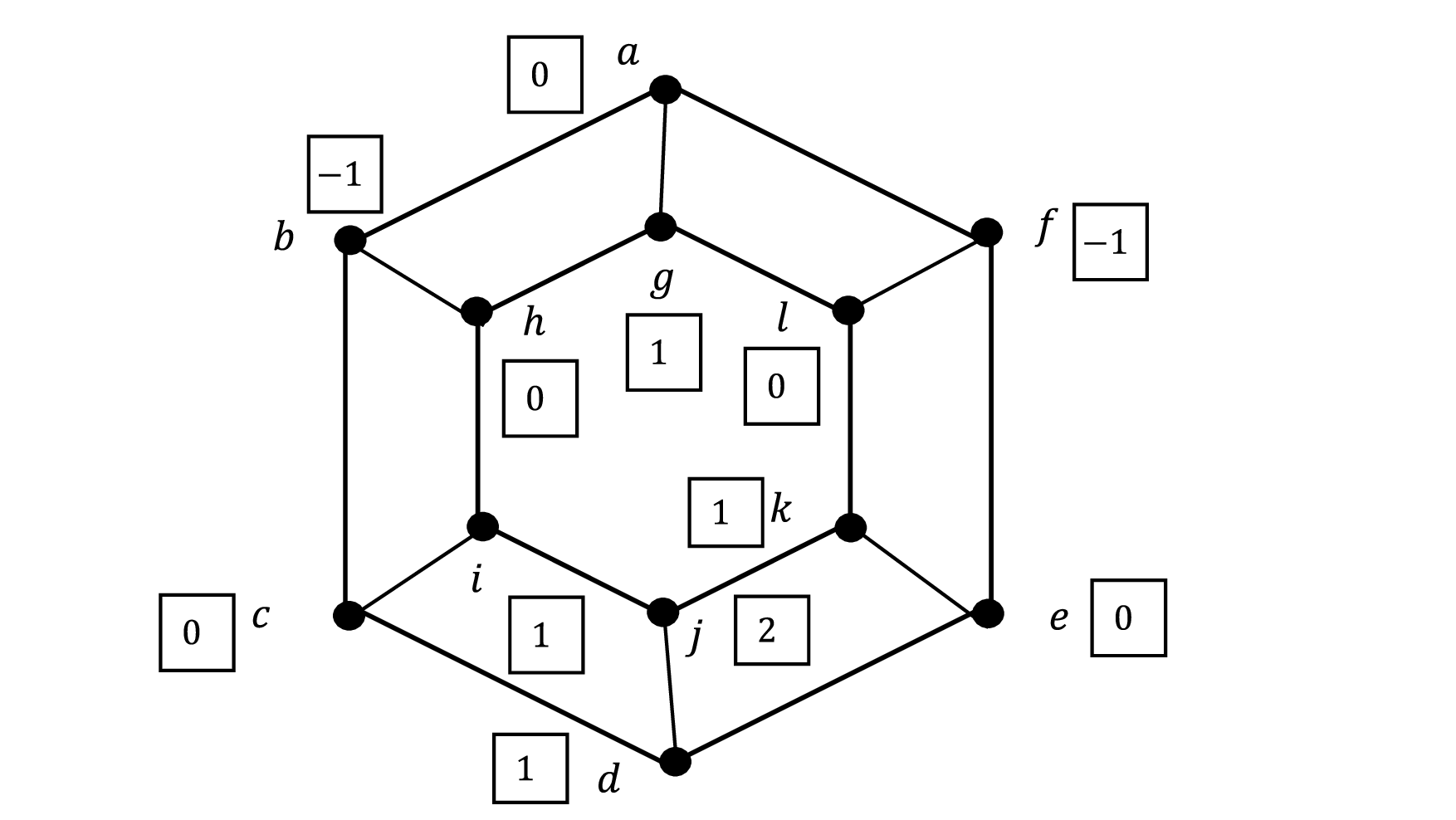} 
\caption{Cayley graphs of the dihedral group of regular Icosahedrons}
\label{fig6}
\end{figure}

Now, we calculate. Ricci curvature of the edge $ab$ in Figure 6
We are considering a transport plan.
By Definition $4$, vertex $a$ has probability $\alpha$. 
on the other hand,  vertex $b, f$ and $g$ each has probability $\frac{1-\alpha}{3}$.
Satisfying Definition $2$, we can provide the following  transport plan:\\
$a \rightarrow f ; \alpha- \frac{1-\alpha}{3}, \
b \rightarrow h ;  \frac{1-\alpha}{3}, \
f \rightarrow l ; \frac{1-\alpha}{3}$, 
but the amount of transport between the other vertices is zero.
The following results can be calculated from these optimal transport plans.
\begin{align}
W_1 \leq  \alpha + \frac{1-\alpha}{3}.
\end{align}
Next, by Definition $5$, we get the following result of the  Ollivier  Ricci curvature value.
\begin{align}
\kappa_{\alpha} \geq  2 \times \frac{1-\alpha}{3}.
\end{align}
By Definition $7$, we get the following lower bound of the Ricci curvature.
\begin{align}
\kappa \geq  \frac{2}{3}.
\end{align}

We define a $1$-Lipschtiz function as figure $6$.
The number in each beside each vertex is the value of the $1$-Lipschiz function.
We have the following result from Theorem $1$ and this $1-$ Lipschiz function.
\begin{align}
W _1 \geq  \alpha + \frac{1}{3}(1-\alpha).
\end{align}
We use Definition$5$ and $7$, and we get the following the Ollivier Ricci curvature value.
\begin{align}
\kappa_{\alpha} \leq \frac{2}{3}(1-\alpha).
\end{align}
Therefore, we get the following upper bound of the Ricci curvature.
\begin{align}
\kappa  \leq \frac{2}{3}.
\end{align}
By (49) and (52), the curvature of all edges is as follows:
\begin{align}
\kappa = \frac{2}{3}.
\end{align}
\end{proof}

\subsection{Proof of Proposition \ref{Q4}}\label{propQ4proof}.
\begin{proof}
\begin{figure}[htbp]
\begin{minipage}[c]{0.5\hsize}
    \centering
    \includegraphics[width=8cm]{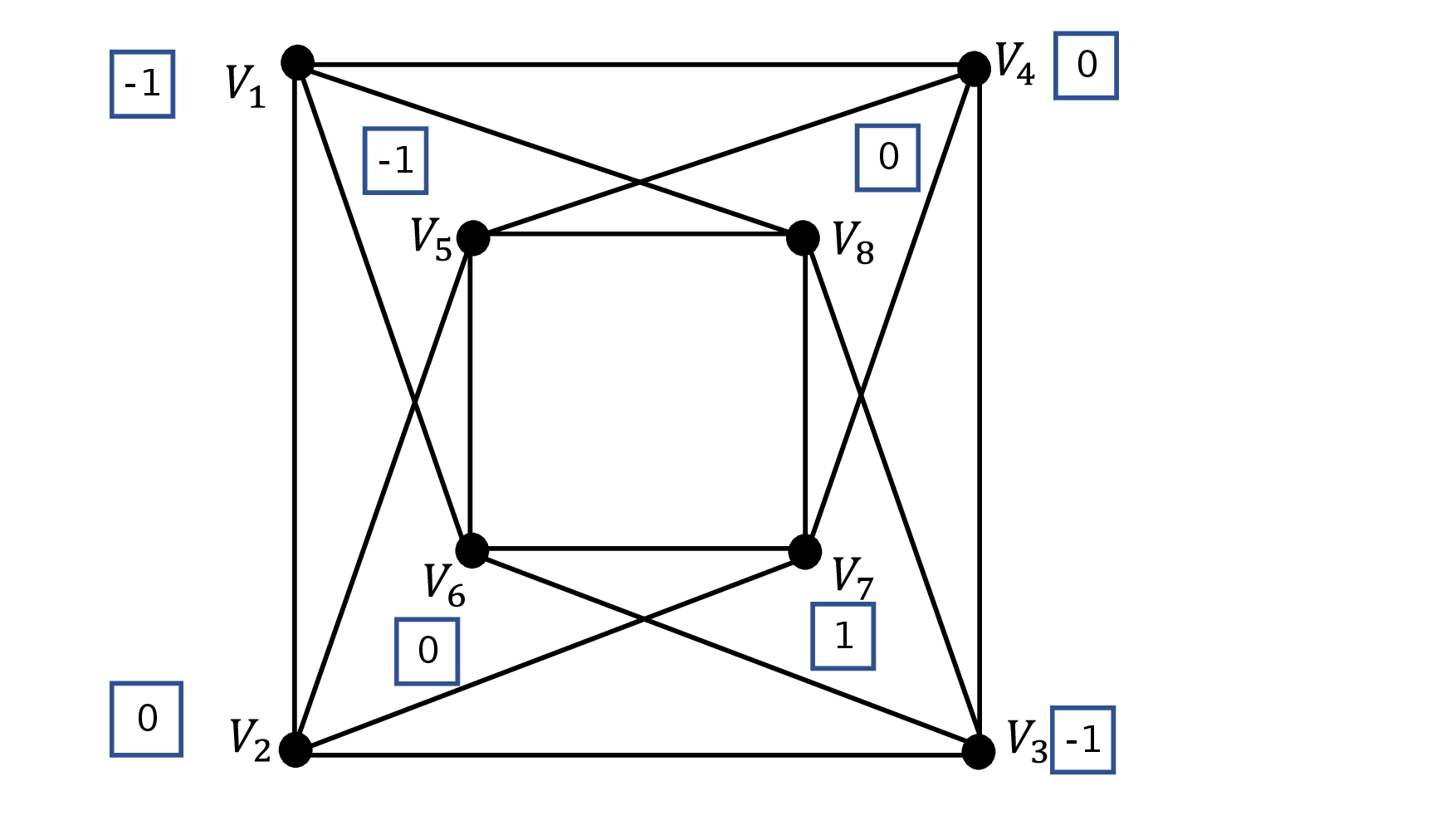}
    \caption{Type A in the Cayley graph of $Q_{8}$}
    \label{fig.q4a}
  \end{minipage}
  \begin{minipage}[c]{0.5\hsize}
    \centering
    \includegraphics[width=8cm]{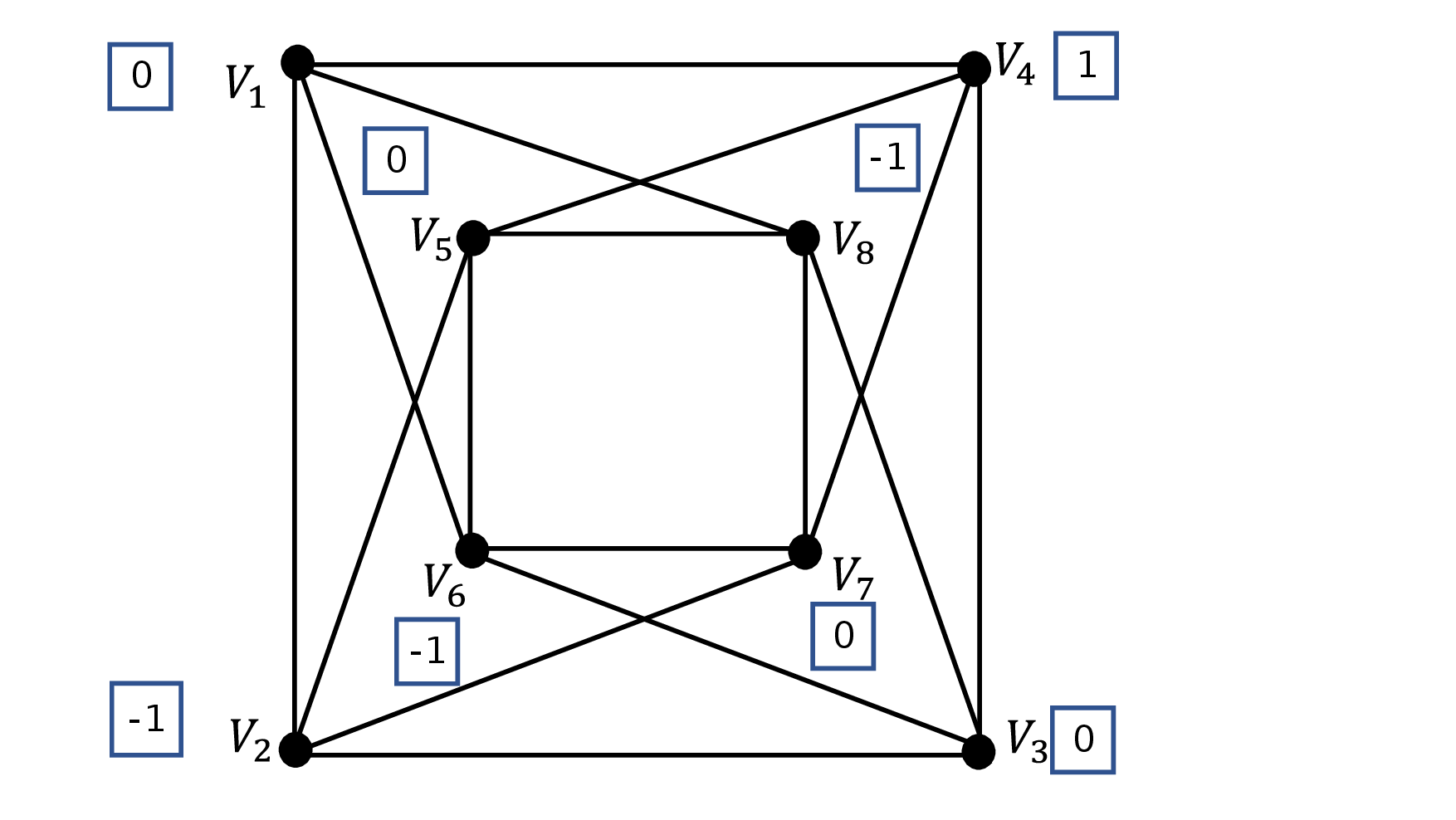}
    \caption{Type B in the Cayley graphs of $Q_{8}$}
    \label{fig.q4b}
  \end{minipage}
\end{figure}

First, we prove that the Ricci curvature of type A is $\kappa = \frac{1}{2}$.
We calculate the Ricci curvature of the edge $V_{6}V_{7}$ in Figure \ref{fig.q4a} as a type A.
By Definition \ref{def_prob_measure}, vertex $V_{6}$ has probability $\alpha$ for $\mu^{\alpha}_{V_{6}}$, 
on the other hand,  vertex $V_{1}, V_{5}, V_{7}$ and $V_{3}$ 
each has probability $\frac{1-\alpha}{4}$. $\mu^{\alpha}_{V_{7}}$ is determined similarly.
Satisfying Definition \ref{trans}, we can provide the following  transport cost:
$V_6 \rightarrow V_7 :  \alpha - \frac{1-\alpha}{4}, \
V_1 \rightarrow V_4 : \frac{1-\alpha}{4}, \
 V_5\rightarrow V_8 :  \frac{1-\alpha}{4}, \
V_3  \rightarrow V_2 : \frac{1-\alpha}{4}$, 
and the amount of transport between the other vertices is zero.
From (\ref{W_1_inf}), we can estimate an upper bound of the Wasserstein distance 
for the above transport between probability measures.
We have the following result.
\begin{align}
W_1 \leq  \alpha +\frac{1}{2} (1-\alpha).
\end{align}
From Definition \ref{def_alpha_ricci},
we have
$\kappa_{\alpha} \geq  \frac{1-\alpha}{2}$.
By Definition \ref{def_ricci}, 
we have the following result of the lower bound of the Ricci curvature.
\begin{align}
\kappa \geq  \frac{1}{2}.
\label{ineq.17}
\end{align}

Next, using a $1$-Lipschtiz function, we estimate an upper bound of 
the Ricci curvature by Theorem \ref{thm_w_1_lower}.
We define a $1$-Lipschtiz function as Figure $\ref{fig.q4a}$.
The number in each box beside each vertex is the value of the $1$-Lipschiz function.
We have the following result from Theorem \ref{thm_w_1_lower} and this $1$- Lipschiz function.

\begin{align}
W _1 \geq  \alpha + \frac{1-\alpha}{2}.
\end{align}
From Definition \ref{def_alpha_ricci}, 
we have 
$\kappa_{\alpha} \leq \frac{1-\alpha}{2}$.
Therefore, we have the following  upper bound of the Ricci curvature
by Definition \ref{def_ricci},
\begin{align}
\kappa  \leq \frac{1}{2}.
\label{ineq.18}
\end{align}

By (\ref{ineq.17}) and (\ref{ineq.18}), the curvature of all type A edges is
$\kappa = \frac{1}{2}$.
\bigskip

We prove that the Ricci curvature of type B   is $\kappa =  \frac{1}{2}$.
We calculate the Ricci curvature of the edge $V_{4}V_{7}$ in Figure \ref{fig.q4b} 
as a type B edge.
By Definition \ref{def_prob_measure}, vertex $V_7$ has probability $\alpha$, and
vertex $V_2, V_4, V_6$ and $V_8$ each has probability $\frac{1-\alpha}{4}$ in $\mu^{\alpha}_{V_{7}}$.
$\mu^{\alpha}_{V_{4}}$ is defined similarly.
Satisfying Definition \ref{trans}, we can provide the following  transport cost:
$V_7 \rightarrow V_4 : \alpha- \frac{1-\alpha}{4}, 
 V_2 \rightarrow V_1 :  \frac{1-\alpha}{4}, \
V_6 \rightarrow V_5 : \frac{1-\alpha}{4}, \
V_8 \rightarrow V_7 : \frac{1-\alpha}{4}$, 
and the amount of transport between the other vertices is zero.
From (\ref{W_1_inf}), we can estimate an upper bound of the Wasserstein distance 
for the above transport between probability measures.
We have the following result.
\begin{align}
W_1 \leq  \alpha + \frac{1-\alpha}{2}.
\end{align}
From Definition \ref{def_alpha_ricci},
we have 
$\kappa_{\alpha} \geq  2 \times \frac{1-\alpha}{2}$.
By Definition \ref{def_ricci}, we have the following result of the lower bound of the Ricci curvature.
\begin{align}
\kappa \geq  \frac{1}{2}
\label{ineq.19}
\end{align}

Next, using a $1$-Lipschtiz function, we estimate an upper bound of 
the Ricci curvature by Theorem \ref{thm_w_1_lower}.
We define a $1$-Lipschtiz function as Figure $\ref{fig.q4b}$.
The number in each box beside each vertex is the value of the $1$-Lipschiz function.
We have the following result from Theorem \ref{thm_w_1_lower} and this $1$- Lipschiz function. 

\begin{align}
W_1 \geq  \alpha + \frac{1}{2}(1-\alpha).
\end{align}
From Definition \ref{def_alpha_ricci}, 
we have 
$\kappa_{\alpha} \leq \frac{1}{2}(1-\alpha)$.
Therefore, we have the following  upper bound of the Ricci curvature
by Definition \ref{def_ricci},
\begin{align}
\kappa  \leq \frac{1}{2}.
\label{ineq.20}
\end{align}
By (\ref{ineq.19}) and (\ref{ineq.20}), the curvature of all type B edges is
$\kappa = \frac{1}{2}$.\rightline{\qed}
\rightline{\qed}
\end{proof}

\subsection{Proof of Proposition \ref{Q6}}\label{propQ6proof}.
\begin{proof}

\begin{figure}[htbp]
\begin{minipage}[c]{0.5\hsize}
    \centering
    \includegraphics[width=8cm]{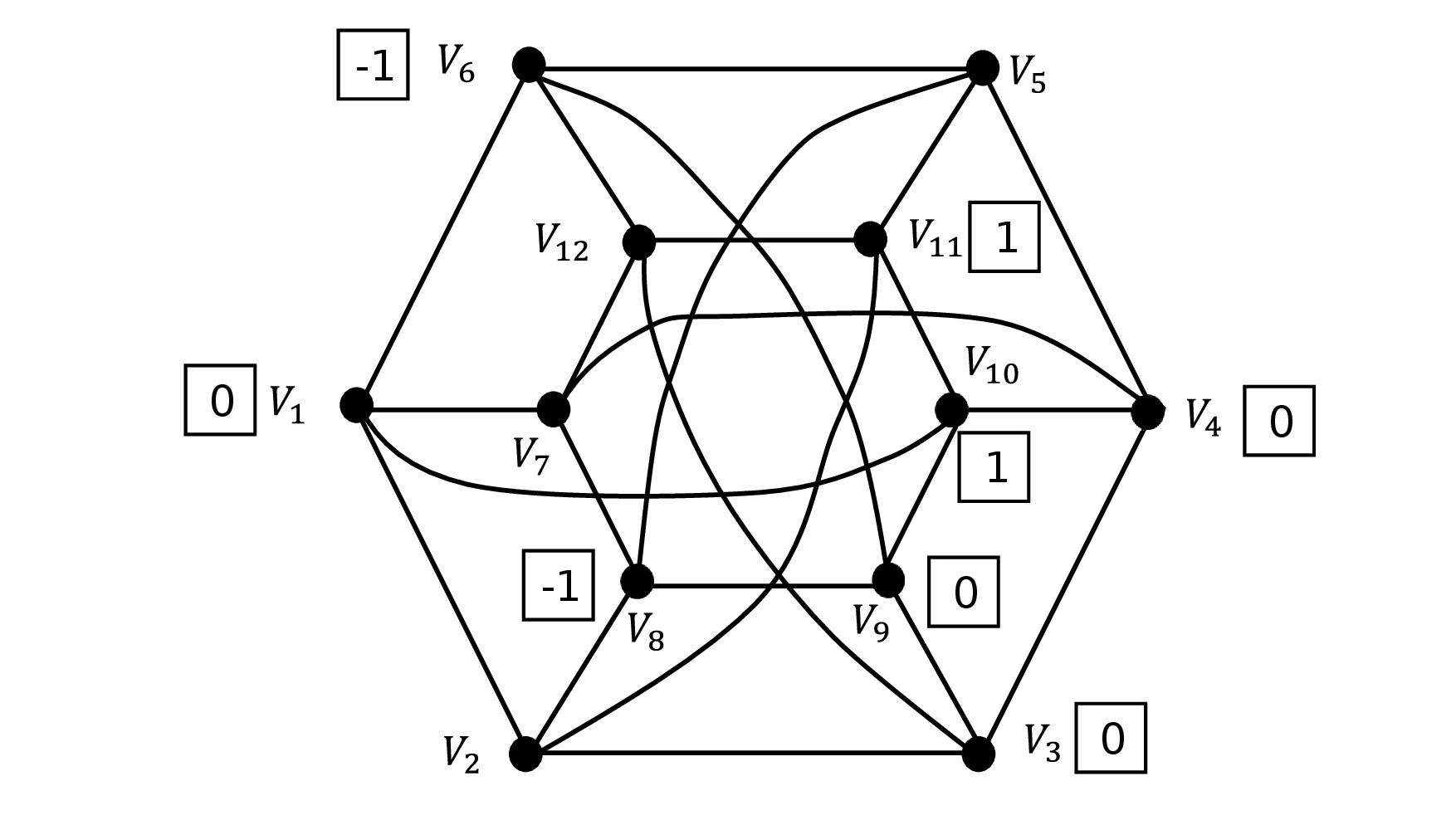}
    \caption{Type A in the Cayley graph of $Q_{12}$}
    \label{fig.q6a}
  \end{minipage}
  \begin{minipage}[c]{0.5\hsize}
    \centering
    \includegraphics[width=8cm]{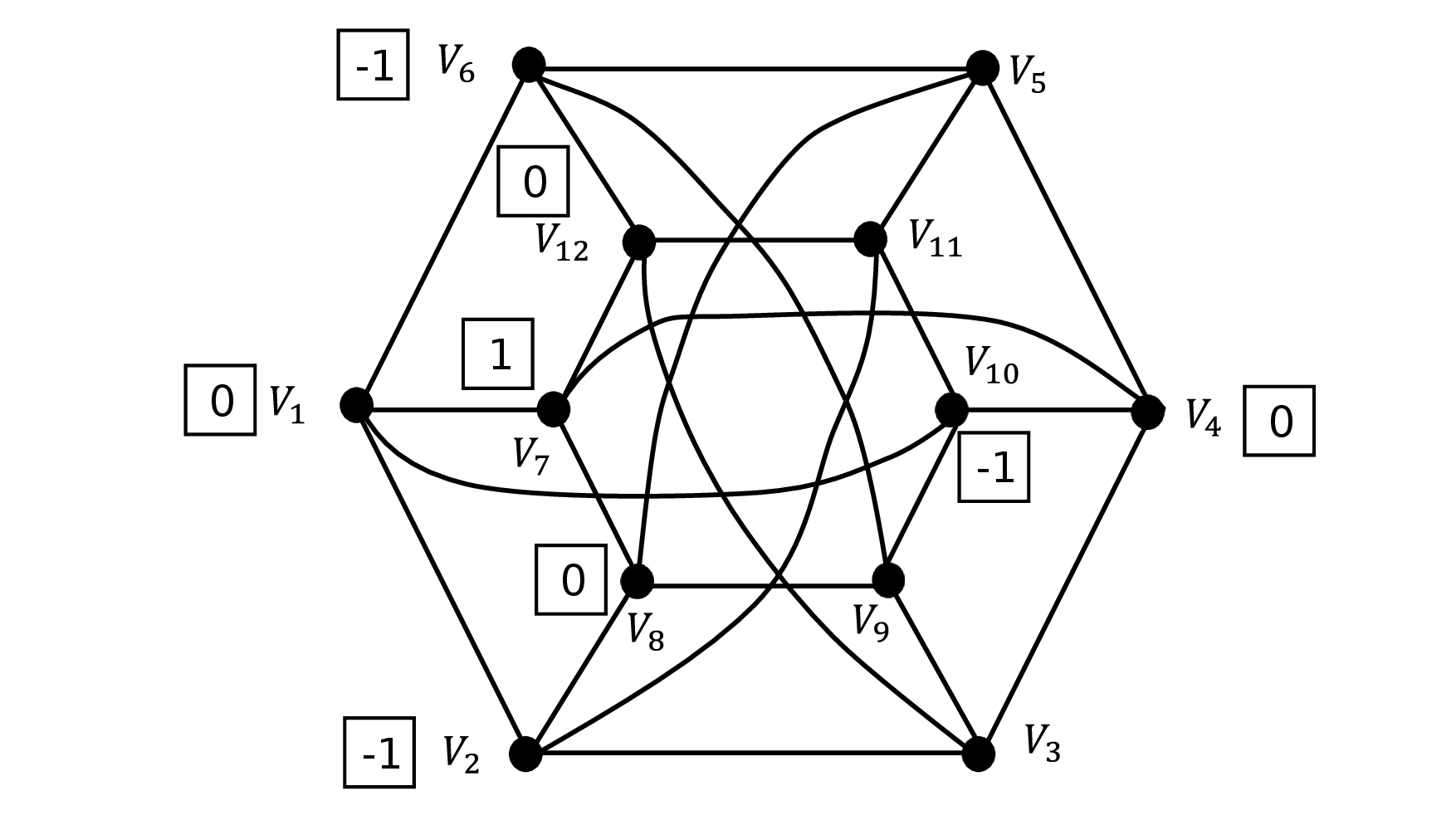}
    \caption{Type B in the Cayley graphs of $Q_{12}$}
    \label{fig.q6b}
  \end{minipage}
\end{figure}

First, we prove that the Ricci curvature of type A  is $\kappa = \frac{1}{2}$.
We calculate the Ricci curvature of type A as $V_{9}V_{10}$ in Figure \ref{fig.q6a}.
Satisfying Definition \ref{trans}, we can provide the following  transport cost
 from $\mu^{\alpha}_{V_{9}}$ to $\mu^{\alpha}_{V_{10}}$:
$V_9 \rightarrow V_{10} : \alpha-\frac{1-\alpha}{4}, \
V_ 6\rightarrow V_1 : \frac{1-\alpha}{4}, \
V_3 \rightarrow V_4 : \frac{1-\alpha}{4}, \
V_8 \rightarrow V_{11} : \frac{1-\alpha}{4} \times 2$, 
and the amount of transport between the other vertices is zero.
From (\ref{W_1_inf}), we can estimate an upper bound of the Wasserstein distance 
for the above transport between probability measures.
We have the following result.
\begin{align}
W_1 \leq  \alpha +\frac{3}{4} (1-\alpha).
\end{align}
From Definition \ref{def_alpha_ricci},
we have 
$\kappa_{\alpha} \geq  \frac{1-\alpha}{4}$.
By Definition \ref{def_ricci}, we have the following result of the lower bound of the Ricci curvature.
\begin{align}
\kappa \geq  \frac{1}{4}.
\label{ineq.21}
\end{align}

Next, using a $1$-Lipschtiz function, we estimate an upper bound of the Ricci curvature 
by Theorem \ref{thm_w_1_lower}.
We define a $1$-Lipschtiz function as Figure \ref{fig.q6a}.
The number in each box beside each vertex is the value of the $1$-Lipschiz function.
We have the following result from Theorem \ref{thm_w_1_lower} and this $1$- Lipschiz function.

\begin{align}
W_1 \geq  \alpha + \frac{3}{4}(1-\alpha).
\end{align}
From Definition \ref{def_alpha_ricci}, 
we have 
$\kappa_{\alpha} \leq \frac{1}{4}(1-\alpha)$.
Therefore, we have the following upper bound of the Ricci curvature
by Definition \ref{def_ricci},
\begin{align}
\kappa  \leq \frac{1}{4}.
\label{ineq.22}
\end{align}

By (\ref{ineq.21}) and (\ref{ineq.22}), the curvature of all type A edges is
$\kappa = \frac{1}{4}$.
This is the same result for all other type A edges.
\bigskip

Next, we prove that the Ricci curvature of type B is $\kappa =  \frac{1}{2}$.
Now, we calculate the Ricci curvature of $V_1V_7$ in Figure \ref{fig.q6b} as a type B edge.
Satisfying Definition \ref{trans}, we can provide the following  transport cost 
from $\mu^{\alpha}_{V_{1}}$ to $\mu^{\alpha}_{V_{7}}$:
$V_1\rightarrow V_7 :\alpha-\frac{1-\alpha}{4}, \  
V_ 2\rightarrow V_{12} : \frac{1-\alpha}{4}, \
V_6 \rightarrow V_8 : \frac{1-\alpha}{4}, \
V_{10} \rightarrow V_4: \frac{1-\alpha}{4}$, 
and the amount of transport between the other vertices is zero.
From (\ref{W_1_inf}), we can estimate an upper bound of the Wasserstein distance 
for the above transport between probability measures.
We have the following result.
\begin{align}
W_1 \leq  \alpha + \frac{1-\alpha}{2}.
\end{align}
From Definition \ref{def_alpha_ricci},
we have 
$\kappa_{\alpha} \geq  \frac{1-\alpha}{2}$.
By Definition \ref{def_ricci}, we have the following result of the lower bound of the Ricci curvature.
\begin{align}
\kappa \geq  \frac{1}{2}.
\label{ineq.23}
\end{align}

Next, using a $1$-Lipschtiz function, we estimate an upper bound of the Ricci curvature by Theorem \ref{thm_w_1_lower}.
We define a $1$-Lipschtiz function as Figure \ref{fig.q6b}.
The number in each box beside each vertex is the value of the $1$-Lipschiz function.
We have the following result from Theorem \ref{thm_w_1_lower} and this $1$- Lipschiz function.

\begin{align}
W_1 \geq  \alpha + \frac{1}{2}(1-\alpha).
\end{align}
From Definition \ref{def_alpha_ricci},  
we have 
$\kappa_{\alpha} \leq \frac{1}{2}(1-\alpha)$.
Therefore, we have the following upper bound of the Ricci curvature
by Definition \ref{def_ricci},
\begin{align}
\kappa  \leq \frac{1}{2}.
\label{ineq.24}
\end{align}
By (\ref{ineq.23}) and (\ref{ineq.24}), the curvature of all type B edges is
$\kappa = \frac{1}{2}$.
This is the same result for all other type B edges.
\hspace{\fill}{\qed}

\end{proof}

\subsection{Proof of Proposition \ref{s12}}\label{props12proof}
\begin{proof}
\begin{figure}[H]
  \centering
    \includegraphics[width=8cm]{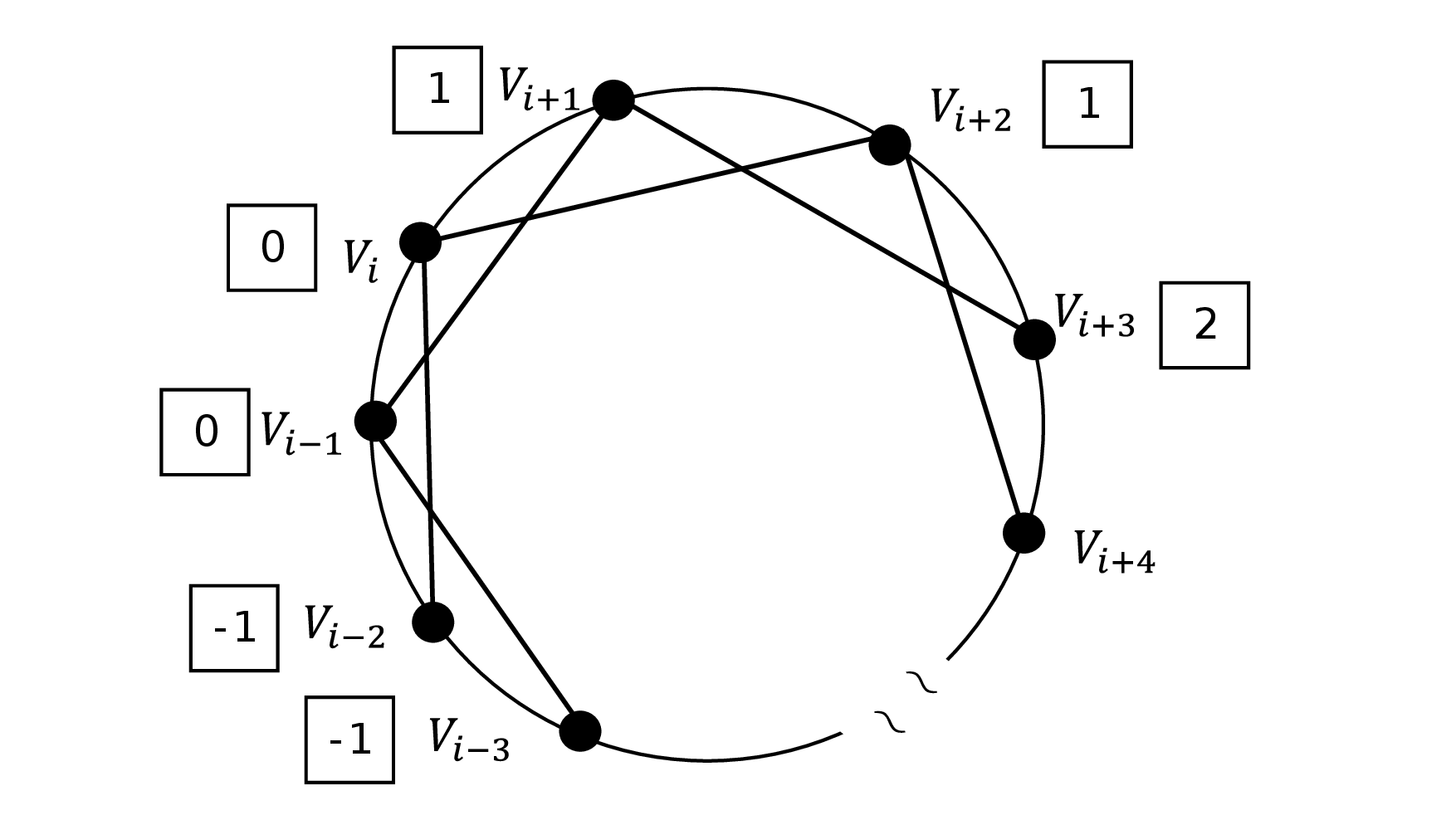}
    \caption{Cayley graphs of $\mathbb{Z}/n\mathbb{Z}$}
    \label{fig.12b}
 
\end{figure}
We prove that the curvature of the edge type A as $V_{i}V_{i+1}$ in Figure \ref{fig.12b}
is $\frac{1}{2}$.
Satisfying Definition \ref{trans}, we can provide the following  transport cost:
$ V_{i-2} \rightarrow V_{i} : \frac{1-\alpha}{4}, \
V_{i} \rightarrow V_{i+1} :\alpha,  \
V_{i+1} \rightarrow V_{i+3} : \frac{1-\alpha}{4}$,
and the amount of transport between the other vertices is zero.
From (\ref{W_1_inf}), we can estimate an upper bound of the Wasserstein distance 
for the above transport between probability measures.
We have the following result.
\begin{align}
W_1 \leq  \alpha +\frac{1}{2} (1-\alpha).
\end{align}
From Definition \ref{def_alpha_ricci},
we have 
$\kappa_{\alpha} \geq  \frac{1}{2} (1-\alpha)$.
By Definition \ref{def_ricci}, we have the following result of the lower bound of the Ricci curvature.
\begin{align}
\kappa \geq  \frac{1}{2}.
\label{ineq.33}
\end{align}

We define a $1$-Lipschtiz function as Figure \ref{fig.12b}.
The number in each box beside each vertex is the value of the $1$-Lipschiz function.
To define this $1$-Lipschtiz function, we use the condition $n \geq 11$.
We have the following result from Theorem $1$ and this $1$- Lipschiz function.

\begin{align}
W_1\geq   \alpha +\frac{1}{2}(1-\alpha).
\end{align}
From Definition \ref{def_alpha_ricci},  
we have
$\kappa_{\alpha} \leq \frac{1}{2} (1-\alpha)$.
Therefore, we have the following upper bound of the Ricci curvature
by Definition \ref{def_ricci},
\begin{align}
\kappa  \leq \frac{1}{2}.
\label{ineq.34}
\end{align}

By (\ref{ineq.33}) and (\ref{ineq.34}), the curvature of all type A edges is
$\kappa = \frac{1}{2}$.
\bigskip

Next, we prove that the Ricci curvature of the edge $V_{i-1}V_{i+1}$ as a type B  is $\kappa =  0$.
Satisfying Definition \ref{trans}, we can provide the following  transport cost:
$ V_{i-1} \rightarrow  V_{i+1} :\alpha- \frac{1-\alpha}{4}, V_{i-2} \rightarrow V_{i+2} : \frac{1-\alpha}{4} \times 2, V_{i-3} \rightarrow V_{i+3} : \frac{1-\alpha}{4} \times 3$, 
and the amount of transport between the other vertices is zero.
Here, we use the fact that $d(V_{i-3}, V_{i+3}) = 3$.
This fact is derived from $n \geq 11$ as follows.
Let us define the path in Figure \ref{fig.12b}
$P_{s12b} := V_{i+3}V_{i+4} \cdots V_{i-4}V_{i-3}$.
Here, $|P_{s12b}| =n-(i-3)-(i+3)=n-6$.
If $n \geq 11$, $ |P_{s12b}| \geq 2+2+1$,
this means that there is no shortcut through the vertices in this path.
Then we obtain
$d(V_{i-3}, V_{i+3})= 3$.

%$d(V_{i-3}, V_{i+3})  \geq 3  \Leftrightarrow
%n-(i+3)-(i-3) \geq 2+2+1 \Leftrightarrow n \geq %11$

%For this to exist, $n \geq 11$ is required.
%This is because if there exists a shortcut from %$V_{i+3}$ to $V_{i-3}$ 
%in a clockwise direction, then $n-6 \geq 4 \Leftrightarrow n \geq 10$ holds. 
%Therefore, for $d(V_{i-3}, V_{i+3}) = 3 $ to exist, 
%it is necessary that $n \geq 10$ does not hold.

From (\ref{W_1_inf}), we can estimate an upper bound of the Wasserstein distance 
for the above transport between probability measures.
We have the following result.
\begin{align}
W_1 \leq  \alpha + (1-\alpha).
\end{align}
From Definition \ref{def_alpha_ricci},
we have 
$\kappa_{\alpha} \geq  0$.
By Definition \ref{def_ricci}, we have the following result of a lower bound of the Ricci curvature.
\begin{align}
\kappa \geq  0.
\label{ineq.35}
\end{align}

Next, using a $1$-Lipschtiz function, we estimate an upper bound of 
the Ricci curvature by Theorem \ref{thm_w_1_lower}.
We define a $1$-Lipschtiz function as Figure $\ref{fig.12b}$.
Here, we use $d(V_{i-3}, V_{i+3}) = 3$, again.
The number in each box beside each vertex is the value of the $1$-Lipschiz function.
We have the following result from Theorem \ref{thm_w_1_lower} and this $1$- Lipschiz function.

\begin{align}
W_1 \geq  \alpha + (1-\alpha).
\end{align}
From Definition \ref{def_alpha_ricci},  
we have
$\kappa_{\alpha} \leq 0$.
Therefore, we have the following  upper bound of the Ricci curvature
by Definition \ref{def_ricci},
\begin{align}
\kappa  \leq 0.
\label{ineq.36}
\end{align}

By (\ref{ineq.35}) and (\ref{ineq.36}), the curvature of all type B edges is
$\kappa = 0$.

\rightline{\qed}
This is the same result for all other edges.
\end{proof}

\subsection{Proof of Proposition \ref{s13}}\label{props13proof}

\begin{proof}
\begin{figure}[H]
\begin{minipage}[c]{0.5\hsize}
    \centering
    \includegraphics[width=8cm]{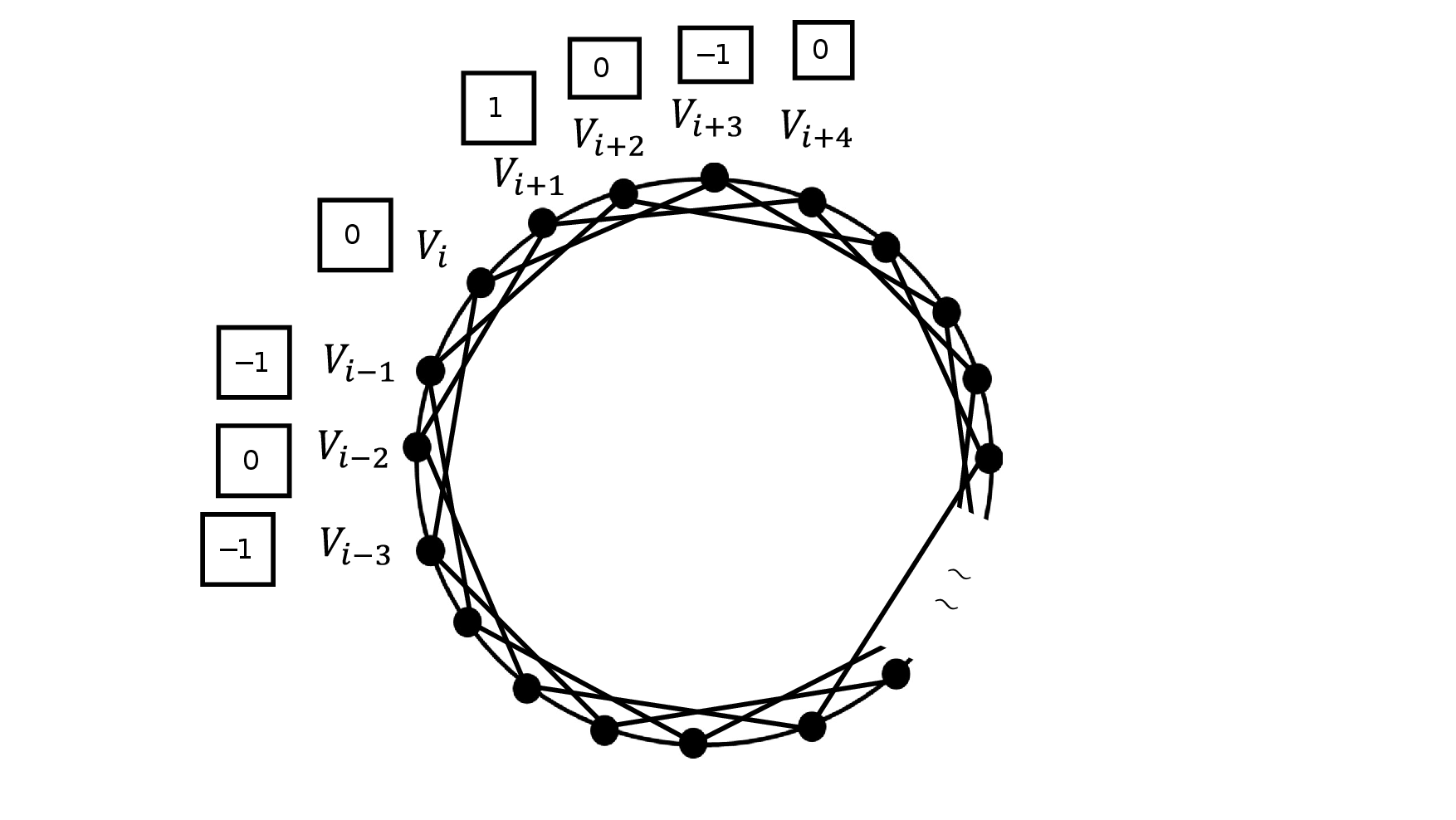}
    \caption{Type A in Cayley graphs of $\mathbb{Z}/n\mathbb{Z}$}
    \label{fig.13a}
  \end{minipage}
  \begin{minipage}[c]{0.5\hsize}
    \centering
    \includegraphics[width=8cm]{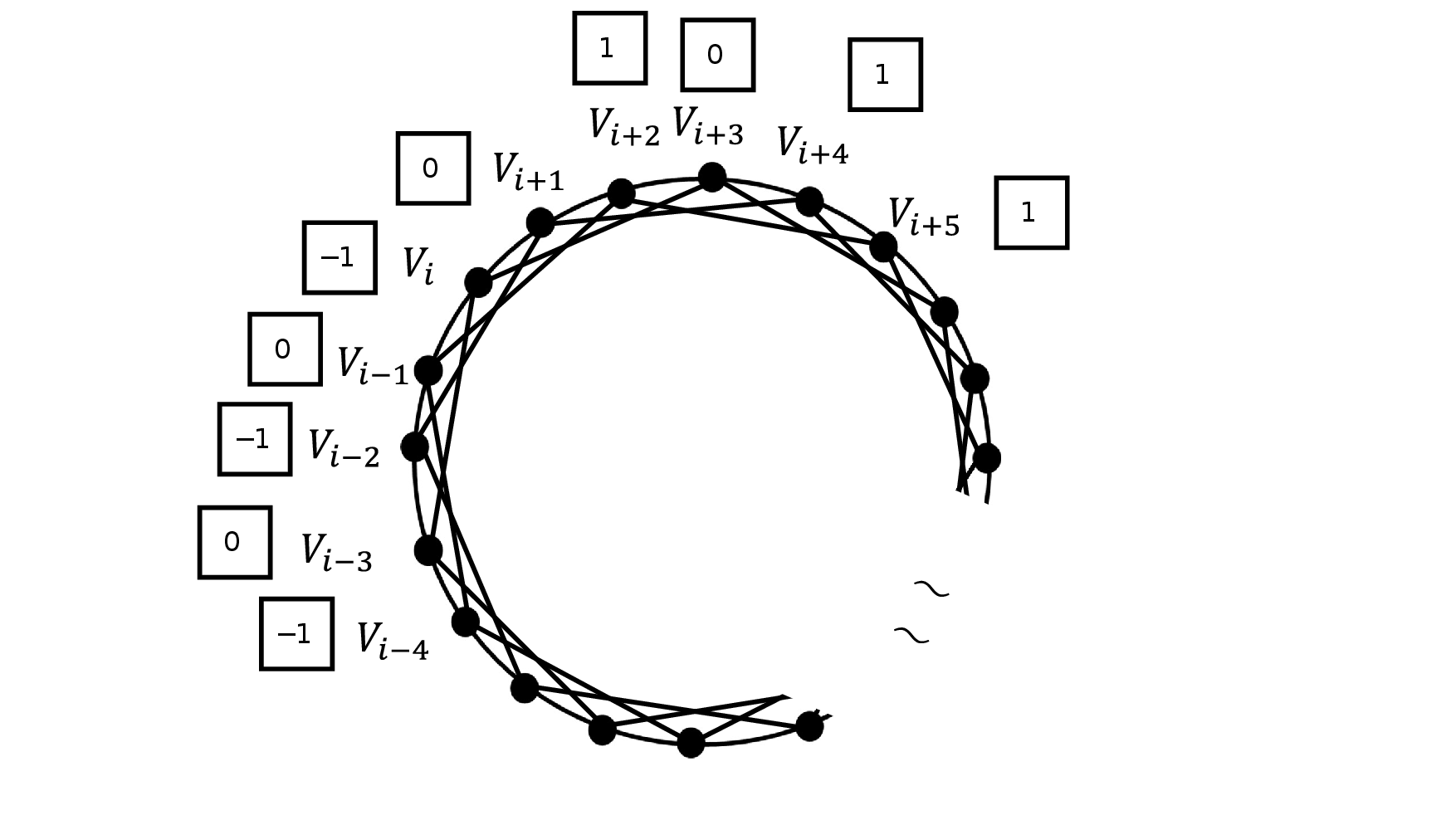}
    \caption{Type B in Cayley graphs of $\mathbb{Z}/n\mathbb{Z}$}
    \label{fig.13b}
  \end{minipage}
\end{figure}
We prove that the Ricci curvature of the edge $V_{i}V_{i+1}$ 
as a type A  edge in Figure \ref{fig.13a}.
We calculate the transport cost defined by
$ V_{i} \rightarrow V_{i+1} : \alpha- \frac{1-\alpha}{4},\ 
V_{i-1} \rightarrow V_{i+2} :\frac{1-\alpha}{4}, 
V_{i-3} \rightarrow V_{i-2} : \frac{1-\alpha}{4}, 
V_{i+3} \rightarrow V_{i+4} : \frac{1-\alpha}{4} $, 
and the amount of transport between the other vertices is zero.
From (\ref{W_1_inf}), we can estimate an upper bound of the Wasserstein distance 
for the above transport between probability measures.
We have the following result.
\begin{align}
W_1 \leq  \alpha + \frac{1}{2}(1-\alpha).
\end{align}
From Definition \ref{def_alpha_ricci},
we have 
$\kappa_{\alpha} \geq \frac{1}{2}(1-\alpha)$.
By Definition \ref{def_ricci}, we have the following result of the lower bound of the Ricci curvature.
\begin{align}
\kappa \geq  \frac{1}{2}.
\label{ineq.37}
\end{align}

We define a $1$-Lipschtiz function as figure $\ref{fig.13a}$.
Next, using a $1$-Lipschtiz function, we estimate an upper bound of the Ricci curvature 
by Theorem \ref{thm_w_1_lower}.
We define a $1$-Lipschtiz function as Figure $\ref{fig.13b}$.
The number in each box beside each vertex is the value of the $1$-Lipschiz function.
We have the following result from Theorem \ref{thm_w_1_lower} and this $1$- Lipschiz function.

\begin{align}
W_1 \geq  \alpha +\frac{1}{2}(1-\alpha).
\end{align}
From Definition \ref{def_alpha_ricci},  
we have 
$\kappa_{\alpha}  \leq \frac{1}{2}$.
Therefore, we have the following upper bound of the curvature
by Definition \ref{def_ricci},
\begin{align}
\kappa  \leq  \frac{1}{2}.
\label{ineq.38}
\end{align}

By (\ref{ineq.37}) and (\ref{ineq.38}), the curvature of all type A edges is
$\kappa =  \frac{1}{2}$.
\bigskip

Next, we investigate the curvature of the edge $V_{i-1}V_{i+2}$ as type $B$ in Figure \ref{fig.13b}.
We calculate the transport cost by 
$V_{i-1} \rightarrow V_{i+2} :\alpha- \frac{1-\alpha}{4}, \
V_{i-2} \rightarrow V_{i+1} : \frac{1-\alpha}{4 }, \
V_{i}  \rightarrow V_{i+3} : \frac{1-\alpha}{4}, \
V_{i-4} \rightarrow V_{ i+5}: \frac{1-\alpha}{4} \times 3$, 
and the amount of transport between the other vertices is zero.
Here, we use the fact that $d(V_{i-4}, V_{i+5}) = 3$.
This fact is derived from $n \geq 16$ as follows.
Let us define the path in Figure \ref{fig.13b}
$P_{s13b} := V_{i+6}V_{i+7} \cdots V_{i-5}V_{i-4}$.
From $n \geq 16$, $|P_{s13b}| =n-(i+5)-(i-4) \geq 3+3+1$.
This means that there is no shortcut through the vertices in this path, and we obtain
$d(V_{i-4}, V_{i+5})=3$.

From (\ref{W_1_inf}), we can estimate an upper bound of the Wasserstein distance 
for the above transport between probability measures.
We have the following result.
\begin{align}
W_1 \leq  \alpha + (1-\alpha).
\end{align}
From Definition \ref{def_alpha_ricci},
we have 
$\kappa_{\alpha }\geq 0$.
By Definition \ref{def_ricci}, 
we have the following result of a lower bound of the Ricci curvature.
\begin{align}
\kappa \geq  0.
\label{inq001}
\end{align}

We define a $1$-Lipschtiz function as figure $\ref{fig.13b}$.
The number in each box beside each vertex is the value of the $1$-Lipschiz function.
From Theorem $1$ and this $1$- Lipschiz function, 
we have the following result.
\begin{align}
W_1 \geq  0.
\end{align}
From Definition \ref{def_alpha_ricci}, 
we have 
$\kappa_{\alpha}  \leq  0$.
Therefore, we have the following upper bound of the Ricci curvature
by Definition \ref{def_ricci},
\begin{align}
\kappa  \leq  0.
\label{inq002}
\end{align}

By (\ref{inq001}) and (\ref{inq002}),  Thus, the curvature of all type B edges is
$\kappa =  0$.
\hspace{\fill}{\qed}
\end{proof}

\subsection{Proof of Proposition \ref{s14}}\label{props14proof}
\begin{proof}
\begin{figure}[htbp]
\begin{minipage}[c]{0.5\hsize}
    \centering
    \includegraphics[width=8cm]{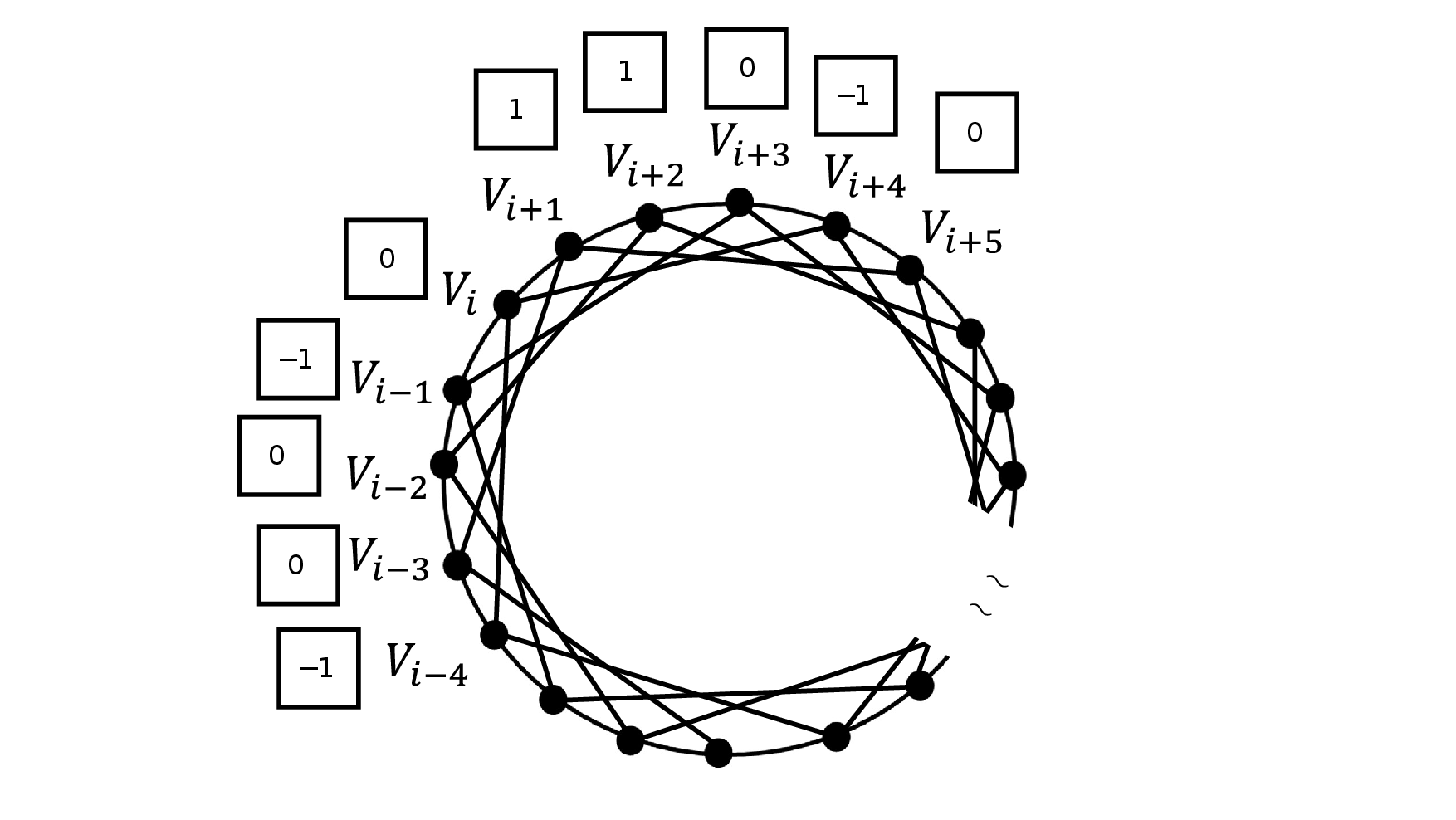}
    \caption{Type A in Cayley graphs of $\mathbb{Z}/n\mathbb{Z}$}
    \label{fig.14a}
  \end{minipage}
  \begin{minipage}[c]{0.5\hsize}
    \centering
    \includegraphics[width=8cm]{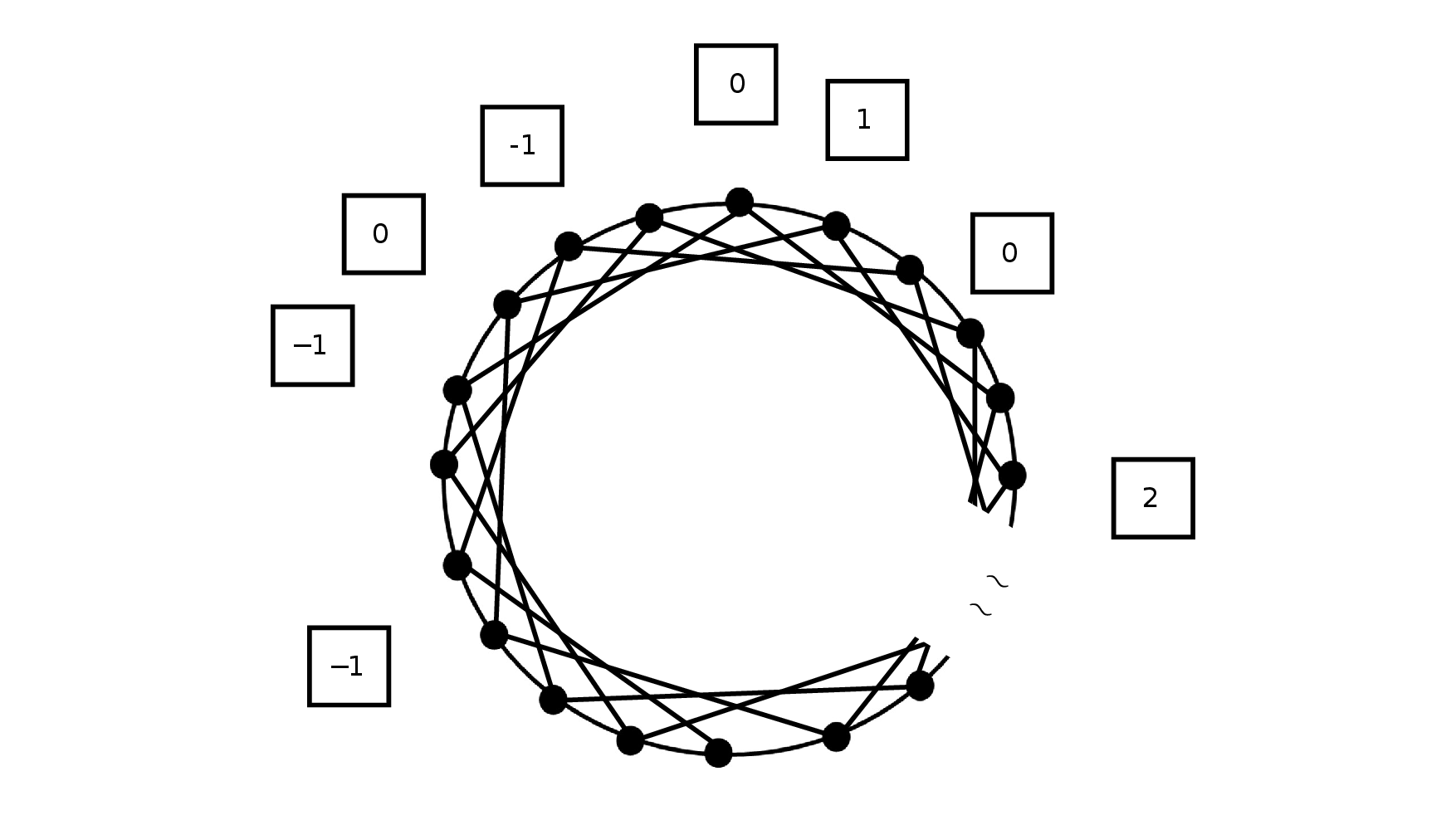}
    \caption{Type B in Cayley graphs of $\mathbb{Z}/n\mathbb{Z}$}
    \label{fig.14b}
  \end{minipage}
\end{figure}

\begin{comment}
First, we find sufficient conditions $n \geq 23$ under 
which transport cost $V_{i-4} \rightarrow V_{i+8}$ 
can be defined in Figure \ref{fig.14b}.
By considering 
$d(V_{i-4}, V_{i+8})  \geq 3  \Leftrightarrow
n-(i+8)-(i-4) \geq 4+4+3 \Leftrightarrow n \geq 23$ in Figure \ref{fig.14b}, 
For this to exist, $n \geq 23$ is required.
This is because if there exists a shortcut from $V_{i-4}$ to $V_{i+8}$ 
in a clockwise direction in Figure \ref{fig.14b}.
There exists the path $V_{i-4}V_{i-5} \cdots  V_{i+9}V_{i+8}$ 
then $n-12 \geq  8  \Leftrightarrow n \geq 20$ holds. 
Therefore, for $d(V_{i-4}, V_{i+8}) = 3 $ to exist, 
it is necessary that $n \geq 20$ does not hold.
\end{comment}
We investigate the curvature of  $V_{i}V_{i+1}$ as a type $A$ edge.
We calculate the transport cost by 
$V_{i} \rightarrow V_{i+1} :\alpha- \frac{1-\alpha}{4}, \
V_{i-4} \rightarrow V_{i-3} : \frac{1-\alpha}{4 }, \
V_{i+4} \rightarrow V_{i+5}: \frac{1-\alpha}{4}, \
V_{i-1} \rightarrow V_{i+2} : \frac{1-\alpha}{4} \times 2$, 
and the amount of transport between the other vertices is zero.

From (\ref{W_1_inf}), we can estimate an upper bound of the Wasserstein distance 
for the above transport between probability measures.
We have the following result.
\begin{align}
W_1 \leq  \alpha + \frac{3}{4}(1-\alpha).
\end{align}
From Definition \ref{def_alpha_ricci},
we have 
$\kappa_{\alpha} \geq  \frac{1}{4}(1-\alpha)$.
By Definition \ref{def_ricci}, we have the following  result of the lower bound of the Ricci curvature.
\begin{align}
\kappa \geq \frac{1}{4}.
\label{ineq.39}
\end{align}

Next, using a $1$-Lipschtiz function, we estimate an upper bound of 
the Ricci curvature by Theorem \ref{thm_w_1_lower}.
We define a $1$-Lipschtiz function as Figure $\ref{fig.14a}$.
The number in each box beside each vertex is the value of the $1$-Lipschiz function.
We have the following result from Theorem \ref{thm_w_1_lower} and this $1$- Lipschiz function.

\begin{align}
W_1 \geq    \alpha + \frac{3}{4}(1-\alpha).
\end{align}
From Definition \ref{def_alpha_ricci}, 
we have 
$\kappa_{\alpha}  \leq   \frac{1}{4}$.
Therefore, we have the following upper bound of the Ricci curvature
by Definition \ref{def_ricci},
\begin{align}
\kappa  \leq   \frac{1}{4}.
\label{ineq.40}
\end{align}

By (\ref{ineq.39}) and (\ref{ineq.40}), the curvature of all type A edges is
$\kappa = \frac{1}{4}$. 
\bigskip

Next, we investigate the curvature of $V_{i}V_{i+4}$ 
as a type $B$ edge.
 We calculate the transport cost by 
$V_{i} \rightarrow V_{i+4} :\alpha- \frac{1-\alpha}{4}, \
V_{i-1} \rightarrow V_{i+3} : \frac{1-\alpha}{4}, \
V_{i+1} \rightarrow V_{i+5} : \frac{1-\alpha}{4}, \
V_{i-4} \rightarrow V_{i+8} : \frac{1-\alpha}{4} \times 3$, 
and the amount of transport between the other vertices is zero.
Here, we use the fact that $d(V_{i-4}, V_{i+8}) = 3$.
This fact is derived from $n \geq 23$ as follows.
Let us define the path in Figure \ref{fig.14b}
$P_{s14b} := V_{i-4}V_{i-5} \cdots V_{i+7}V_{i+8}$.
Then, $|P_{s12b}| =n-(i+8)-(i-5) \geq 4+4+3$, and we get
$d(V_{i-4}, V_{i+5})=3$.

From (\ref{W_1_inf}), we can estimate an upper bound of the Wasserstein distance 
for the above transport between probability measures.
We have the following result.
\begin{align}
W_1 \leq  \alpha + (1-\alpha).
\end{align}
From Definition \ref{def_alpha_ricci},
we have 
$\kappa_{\alpha} \geq 0$.
By Definition \ref{def_ricci}, we have the following result of the lower bound of the Ricci curvature.
\begin{align}
\kappa \geq  0.
\label{ineq.41}
\end{align}

Next, using a $1$-Lipschtiz function, we estimate an upper bound of 
the Ricci curvature by Theorem \ref{thm_w_1_lower}.
We define a $1$-Lipschtiz function as Figure $\ref{fig.14b}$.
The number in each box beside each vertex is the value of the $1$-Lipschiz function.
We have the following result from Theorem \ref{thm_w_1_lower} and this $1$- Lipschiz function.

\begin{align}
W_1 \geq  \alpha +(1-\alpha).
\end{align}
From Definition \ref{def_alpha_ricci},  
we have 
$\kappa_{\alpha}  \leq 0$.
Therefore, we have the following upper bound of the Ricci curvature
by Definition \ref{def_ricci},
\begin{align}
\kappa  \leq  0.
\label{ineq.41}
\end{align}

By (\ref{ineq.40}) and (\ref{ineq.41}),  the curvature of all type B edges is
$\kappa =  0$.\rightline{\qed}
\rightline{\qed}
\end{proof}

\section*{Acknowledgements}

A.S. was supported by JSPS KAKENHI Grant Number 21K03258.

\end{document}